\renewcommand{\epsilon}{\varepsilon}
\newcommand{\R}{\mathbb{R}}
\newcommand{\Z}{\mathbb{Z}}
\newcommand{\C}{\mathbb{C}}
\renewcommand{\phi}{\varphi}
\newcommand{\xx}{\mathbf{x}}
\newcommand{\yy}{\mathbf{y}}
\newtheorem{thm}{Theorem}
\newtheorem{lemma}[thm]{Lemma}
\newtheorem{prop}[thm]{Proposition}
\theoremstyle{definition}
\newtheorem{definition}[thm]{Definition}
\theoremstyle{remark}
\newtheorem{remark}[thm]{Remark}
\title{Symplectic embeddings of the $\ell_p$-sum of two discs}
\author{Yaron Ostrover\thanks{School of Mathematical Sciences, Tel Aviv University, Ramat Aviv 6997801, Israel. } \ and Vinicius G. B. Ramos\thanks{Instituto de Matem\'atica Pura e Aplicada - Estrada Dona Castorina 110 - Rio de Janeiro - Brazil.}}
\date{}
\begin{document}

\maketitle
\begin{abstract}  
In this paper we study symplectic embedding questions for the  $\ell_p$-sum of two discs in ${\mathbb R}^4$, when $1 \leq p \leq \infty$. In particular, we compute the symplectic inner and outer radii in these cases, and show how different kinds of  embedding rigidity and flexibility phenomena  appear as a function of the parameter $p$.  
\end{abstract}

\section{Introduction and Results}

Since Gromov's seminal work~\cite{gromov}, questions about symplectic embeddings have lain at the core of symplectic geometry.
These questions are usually very difficult even for a relatively simple class of examples. In fact, only recently has it become possible to specify exactly when a four-dimensional ellipsoid is symplectically embeddable in a ball~\cite{McD-Sch}, or in another four-dimensional ellipsoid~\cite{McD}. For more information on 
symplectic embeddings  we refer the reader e.g., to the recent survey~\cite{Schlenk}. 

\medskip

In~\cite{bidisk}, using the theory of embedded contact homology, the second named author established sharp obstructions
 for symplectic embeddings of the  product of two Lagrangian discs in ${\mathbb R}^4$ into balls, ellipsoids and symplectic polydiscs. 
This product configuration appears naturally as the phase space of  billiard dynamics in a round disc (see e.g.,~\cite{Art-Ost,bidisk}). 
In this note we extend the above results to the family of $\ell_p$-sums of two Lagrangian discs. In particular, we show how different kinds of symplectic embedding rigidity and flexibility phenomena appear as functions of the parameter $1 \leq p \leq \infty$.
For comparison, we also consider similar embedding questions in the natural counterpart case of the $\ell_p$-sum of two symplectic discs. In order to be more precise and state our results, we first introduce some notations.
\medskip

Consider ${\mathbb R}^{4}$ equipped with coordinates $(x_1,x_2,y_1,y_2)$, and with the standard symplectic
form $\omega_0 = \sum_{i=1}^2 dx_i \wedge dy_i$. 
For two subsets $X_1,X_2\subset \R^4$, we write $X_1\hookrightarrow X_2$ if there is an embedding $\varphi:X_1\hookrightarrow X_2$ preserving the symplectic form, i.e., $\varphi^*\omega_0=\omega_0$. We denote the symplectic inner and outer radii of a set $X \subset {\mathbb R}^4$ by
\begin{equation*}
r_{S}(X)=\sup\left\{r\in\R\mid B^4(r)\hookrightarrow  X \right \} \ {\rm and} \ \ 
R_{S}(X)=\inf\left\{r\in\R\mid X \hookrightarrow B^4(r)\right\},
\end{equation*}
where $B^4(r) = \{z\in\R^4\mid\pi\Vert z\Vert^2 <r\}$ is the Euclidean ball 
with Gromov width $r$ (i.e., with radius $\sqrt{r/\pi}$).
Moreover, for $1 \leq p < \infty$, we denote by 
\begin{equation} \label{p-sum-of-discs}{\mathbb X}_p  
= \left\{(\xx,\yy)\in\R^2\times\R^2\mid\Vert\xx\Vert^p+\Vert\yy\Vert^p<1\right\},\end{equation}
the $\ell_p$-sum of two Lagrangian discs, where here $\| \cdot \|$ denotes the standard Euclidean norm on ${\mathbb R}^2$. If $p = \infty$ we set \[{\mathbb X}_{\infty} =\left\{(\xx,\yy)\in\R^2 \times \R^2 \mid\max(\Vert\xx\Vert,\Vert\yy\Vert)<1\right\}.\]
Our first result concerns the inner and outer radii of ${\mathbb X}_p$. 
More precisely, for $p \geq 1$, we  denote the area of the unit disk in $\R^2$ with respect to the standard $\ell_p$-norm by
\begin{equation} \label{eq:area-p-norm}
 A(p)=4\int_0^1(1-r^p)^{1/p}\,dr=\frac{4\cdot\Gamma(1+{\frac 1 p})^2}{\Gamma(1+{\frac 2 p})},
\end{equation} 
where $\Gamma (\cdot )$ is the Gamma function. Moreover, for $v\in[0,(1/4)^{1/p}]$ we define 
\begin{equation}\label{eq:g}
g_p(v):=2\int_{\left(\frac{1}{2}-\sqrt{\frac{1}{4}-v^p} \,\right)^{1/p}}^{\left(\frac{1}{2}+\sqrt{\frac{1}{4}-v^p}\, \right)^{1/p}}\sqrt{(1-r^p)^{2/p}-\frac{v^2}{r^2}}\,dr.
\end{equation}
\begin{thm}\label{thm:main} For $1 \leq p < \infty$, the symplectic inner radius of ${\mathbb X}_p$ is given by 
\begin{align*}
r_{S}({\mathbb X}_p)&=\left\{\begin{array}{ll}2\pi\left(1/4\right)^{1/p},&\text{ if }1\le p\le 2,\\A(p),&\text{ if  } p\ge 2,\end{array}\right.
\end{align*}
and the symplectic outer radius by
\begin{align*}
R_{S}({\mathbb X}_p)&=\left\{\begin{array}{ll}A(p),&\text{ if }1\le p\le 2,\\
2\pi\left(1/4\right)^{1/p},&\text{ if }2\le p \le 9/2,\\
2\pi \left(g_p'\right)^{-1}(-2\pi/3)+3g_p\left(\left(g_p'\right)^{-1}(-2\pi/3)\right),&\text{ if } 9/2< p<\infty. \end{array}\right.
\end{align*}
\end{thm}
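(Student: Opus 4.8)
The plan is to treat the inner and outer radii separately, and within each to handle the regimes $p\le 2$, $p\ge 2$, and (for the outer radius) the finer split at $p=9/2$. The guiding philosophy is that $\mathbb{X}_p$ is a toric domain in disguise: passing to symplectic polar coordinates $\pi\|\xx\|^2 = z_1$, $\pi\|\yy\|^2 = z_2$ the condition $\|\xx\|^p + \|\yy\|^p < 1$ becomes a condition of the form $(z_1/\pi)^{p/2} + (z_2/\pi)^{p/2} < 1$, so $\mathbb{X}_p$ is symplectomorphic to the toric domain $X_{\Omega_p}$ over the region $\Omega_p = \{(z_1,z_2) : z_i\ge 0,\ (z_1/\pi)^{p/2}+(z_2/\pi)^{p/2}\le 1\}$ in the first quadrant. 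For $p\ge 2$ the exponent $p/2\ge 1$, so $\Omega_p$ is convex (it is the region under a concave graph) and $X_{\Omega_p}$ is a \emph{convex toric domain}; for $1\le p\le 2$ the exponent is $<1$, the graph is convex, the complement in the bounding rectangle is convex, and $X_{\Omega_p}$ is a \emph{concave toric domain}. This dichotomy is exactly what drives the two cases in the theorem.

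For the inner radius, I would invoke the known computations of symplectic capacities for such toric domains: a ball $B^4(r) = X_{\Delta(r)}$ over the triangle $\Delta(r)$ with legs $r$ embeds into a convex (resp.\ concave) toric domain iff a combinatorial condition on $\Omega_p$ holds, and the supremal such $r$ is governed by ECH capacities (McDuff's theorem that ECH capacities are a complete invariant for ellipsoid embeddings, extended to these toric settings in \cite{bidisk}). In the concave case $1\le p\le 2$ one expects $r_S$ to equal the largest $r$ such that $\Delta(r)\subset\Omega_p$, which by the symmetry $z_1\leftrightarrow z_2$ is determined by the diagonal point: $2(r/2\pi)^{p/2}\le 1$, giving $r = 2\pi(1/4)^{1/p}$; here the \emph{inscribed} ball is optimal because ECH obstructions vanish for concave domains, so the naive volume-free packing is achievable. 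In the convex case $p\ge 2$, the relevant obstruction is the first ECH capacity / the cube capacity, and the answer $A(p) = \mathrm{Vol}$-type quantity $= $ area of the $\ell_p$ unit disc appears because $\mathbb{X}_p$ then contains a ball of Gromov width equal to $2\,\mathrm{Area}(\Omega_p)$—I would verify $2\,\mathrm{Area}(\Omega_p) = A(p)$ by the substitution in \eqref{eq:area-p-norm}, and check optimality via the ECH capacity of the ball exceeding that of $X_{\Omega_p}$ beyond this radius.

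For the outer radius the roles flip: embedding $\mathbb{X}_p$ into $B^4(r)$ is now obstructed, for $1\le p\le 2$, by the volume/ECH capacities of the \emph{concave} domain, forcing $R_S = A(p)$ (volume obstruction is sharp for concave-into-ball by \cite{bidisk}), while for $p\ge 2$ the convex domain $\mathbb{X}_p$ must at least cover its inscribed... no—must contain the ball of width $2\pi(1/4)^{1/p}$ computed above, which it cannot be squeezed below \emph{until} $p$ is large enough. The transition at $p=9/2$ is the delicate point: once $p>9/2$ the boundary curve of $\Omega_p$ is ``flat enough'' near the diagonal that a single ball no longer captures the optimal outer embedding, and instead the optimum is realized by an ellipsoid-type packing whose ECH capacities match those of $X_{\Omega_p}$; the extremal ellipsoid is found by optimizing the weight sequence, which after the standard reduction to the ECH capacity formula $c_k(X_{\Omega_p}) = c_k(E(a,b))$ becomes a one-variable calculus problem—minimize over the free parameter, and the first-order condition $g_p'(v) = -2\pi/3$ pops out (the $-2\pi/3$ being the slope matching the $k\sim 3k'$ asymptotics of ECH capacities of a ball, i.e.\ $c_k(B^4(r))\sim r\sqrt{k/3}$ versus $c_k \sim$ area-growth of $X_{\Omega_p}$). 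The formula $R_S = 2\pi(g_p')^{-1}(-2\pi/3) + 3 g_p((g_p')^{-1}(-2\pi/3))$ is then the value of the Legendre-type transform of $g_p$ at slope $-2\pi/3$.

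The main obstacle I anticipate is the $p>9/2$ outer-radius case: identifying the correct extremal object (proving \emph{no better} embedding exists) requires the full strength of the ECH capacity obstruction for $X_{\Omega_p}\hookrightarrow B^4(r)$, a careful asymptotic analysis of $c_k(X_{\Omega_p})$ in terms of lattice-point counts under the curve $r\mapsto \pi(1-r^p)^{2/p}$-type boundary, and matching it against $c_k(B^4(r))\sim r\sqrt{k/3}$; verifying that the $\sup_k c_k(X_{\Omega_p})/\sqrt{k}$ is attained in the limit and equals the claimed Legendre transform, and that $9/2$ is exactly where this limiting regime takes over from the ``single ball'' regime $R_S = 2\pi(1/4)^{1/p}$, is where the real work lies. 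The construction direction (realizing these radii by explicit ball/ellipsoid packings) should follow from \cite{bidisk}, \cite{McD-Sch}, \cite{McD} once the combinatorics of $\Omega_p$ is pinned down, so I would expect the constructions to be comparatively routine and the obstructions to carry the proof.
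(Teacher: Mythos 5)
Your opening step fails, and it sinks the rest of the argument. The coordinates $\xx=(x_1,x_2)$ and $\yy=(y_1,y_2)$ in the definition of ${\mathbb X}_p$ are the two \emph{Lagrangian} factors of $\R^2\times\R^2$, not the real and imaginary parts of a single $z_j$; the substitution $\pi\|\xx\|^2=z_1$, $\pi\|\yy\|^2=z_2$ is not the moment map $\mu(z_1,z_2)=(\pi|z_1|^2,\pi|z_2|^2)$, and ${\mathbb X}_p$ is not ${\mathbb T}^2$-invariant. The domain you describe, $X_\Omega$ with $\Omega=\{(z_1/\pi)^{p/2}+(z_2/\pi)^{p/2}\le 1\}$, is (up to normalization) the \emph{symplectic} $\ell_p$-sum ${\mathbb B}_p({\mathbb C}^2)$ of Theorem~\ref{thm:symplp}, a different object with different radii. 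Identifying ${\mathbb X}_p$ with a toric domain is the actual content of Theorem~\ref{thm:toric}: one must run the Arnold--Liouville/action-angle construction for the commuting pair $(H_p,V)$ with $V=y_1x_2-y_2x_1$, and the resulting region $\Omega_p$ is bounded by the curve \eqref{eq:curve} built from the action integral $g_p$ of \eqref{eq:g} --- not by $x^{p/2}+y^{p/2}=1$. A symptom that you have the wrong domain: your convexity dichotomy is exactly backwards. By Proposition~\ref{prop:cc}, $X_{\Omega_p}$ is \emph{convex} for $1\le p\le 2$ and \emph{concave} for $p\ge 2$, which is why $r_S={}$(first ECH capacity, realized by the inscribed ball $B(A(p))$ over the triangle under $x+y=A(p)$) for $p\ge 2$ and $r_S=2\pi(1/4)^{1/p}$ (realized by an honest inclusion $B(r)\subset{\mathbb X}_p$ via H\"older) for $p\le 2$.

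Your account of the $p>9/2$ regime is also not a workable route. The value $-2\pi/3$ does not come from large-$k$ asymptotics of ECH capacities; it comes from the \emph{second} capacity alone. For a symmetric concave toric domain, $c_2=w_1+w_2$ where $w_2$ is cut out by the supporting line of slope $-1/2$ to $\partial\Omega_p$ (Lemma~\ref{lemma:c1c2}), and imposing slope $-1/2$ on the parametrization \eqref{eq:curve} gives $g_p'(v)/(2\pi+g_p'(v))=-1/2$, i.e.\ $g_p'(v)=-2\pi/3$. The threshold $p=9/2$ is where $\lim_{v\to(1/4)^{1/p}}g_p'(v)=-\sqrt{2/p}\,\pi$ crosses $-2\pi/3$ (Lemma~\ref{lem:properties-of-gp}(e)), so that for $p\le 9/2$ the slope-$(-1/2)$ line exits at the $x$-intercept and $c_2=2\pi(1/4)^{1/p}$, while for $p>9/2$ it is genuinely tangent. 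The matching upper bound is not an asymptotic argument either: the paper constructs the embedding $X_{\Omega_p}\hookrightarrow B(c_2)$ by decomposing $\Omega_p$ into its weight sequence, invoking Cristofaro-Gardiner's concave-into-convex criterion to reduce to a ball packing, and verifying reducibility under Cremona moves (Proposition~\ref{prop:flex}), which requires checking a volume inequality and the combinatorial condition $\tau(\Omega_1)\ge\tau(\Omega_{11})+\tau(\Omega_{111})$. None of this is reachable from your starting point, since the weight sequence you would compute belongs to the wrong domain.
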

\medskip
\begin{remark} {\rm
It is shown in Proposition~\ref{lem:properties-of-gp} below that $g_p'$ is injective and its image contains $-2\pi/3$ for $p\ge 9/2$. Thus the expression in the last line of Theorem \ref{thm:main} is well defined. We also recall that in~\cite{bidisk} it was proved that $r_{ S}({\mathbb X}_\infty) = 4$ and $R_{ S}({\mathbb X}_\infty)=3\sqrt{3}$. It is clear that $A(p)\to 4$ as $p\to \infty$, and we will see below that $R_{\cal S}({\mathbb X}_p) \to 3\sqrt{3}$ as $p\to\infty$.
Moreover, it can be shown explicitly that the functions defined by the formulas above are continuous at $p=2$ and $p=9/2$. This should indeed be the case, since $r_{S}({\mathbb X}_p)$ and $R_{S}({\mathbb X}_p)$ are clearly continuous. Finally, we remark that at  $p=9/2$, a certain ``phase-transition'' occurs between rigidity and flexibility of the embedding ${\mathbb X_p} \hookrightarrow B^4(r)$, 
as explained in Theorem~\ref{thm:rigid} below. 
}
\end{remark}

\medskip

The proof of Theorem \ref{thm:main} follows the approach of \cite{bidisk}, i.e., we use the theory of integrable Hamiltonian systems to describe the domain ${\mathbb X}_p$ as a toric domain, and then use the machinery of ECH capacities  to find symplectic embedding obstructions. 
To find optimal symplectic embeddings on the other hand, we combine results from~\cite{busepin},~\cite{cgcc},~\cite{lili}, and~\cite{mcduffblowup}.  
We turn now to explain this in more details.

\subsection{The $\ell_p$-sum of two Lagrangian discs as a toric domain}
Toric domains form a large class of symplectic manifolds that generalizes ellipsoids and polydiscs. For this class, certain symplectic embedding questions are better understood, particularly in dimension four, see, e.g.,~\cite{mcdel,cgcc,ccfhr}. For our purposes,  a domain $X\subset \R^4$ is toric if it is invariant under the standard action of ${\mathbb T}^2$ on $\R^4 \simeq \C^2$. Here, we allow $X$ to have boundary and corners. Note that a toric domain $X$ is completely determined by its image under the moment map $\mu:\C^2 \simeq \R^4\to \R^2$, given by $\mu(z_1,z_2)=(\pi|z_1|^2,\pi|z_2|^2)$, where $z_j = x_j + iy_j$ for $j=1,2$. For a domain $\Omega\subset\R_{\ge 0}^2$, we denote the corresponding toric domain $\mu^{-1}(\Omega)$ in ${\mathbb R}^4$ by $ X_{\Omega}$. 

\medskip

From now on we assume that $\Omega \subset\R_{\ge 0}^2$ is the region bounded by the coordinate axes, and the graph of a decreasing continuous function $\phi:[0,a]\to\R_{\ge 0}$ such that $\phi(a)=0$.

\begin{definition}
With the above notations, the set $X_\Omega$ is called a {\em concave} or {\em convex} toric domain if the function $\phi$ is convex or concave, respectively.
\end{definition}
\begin{remark}
In the literature there are more general definitions of convex/concave toric domains including, e.g., rectangles touching the origin. However, we will not need to use them in the context of this paper, c.f. \cite{cgcc,hutbey}.
\end{remark}

The main ingredient  in the proof of Theorem~\ref{thm:main} above is the following result.
\begin{thm}\label{thm:toric}
For $p \in[1,\infty)$, the interior of the Lagrangian product ${\mathbb X}_p$  is symplectomorphic to the interior of the toric domain $X_{\Omega_p}$, where $\Omega_p$ is the relatively open set in $\R_{\ge0}^2$ bounded by the coordinate axes and the curve parametrized by
\begin{equation}\label{eq:curve}\begin{aligned}\left(2\pi v+g_p(v),g_p(v)\right),&\quad {\it for} \,\, v \in[0,(1/4)^{1/p}],\\
\left(g_p(-v),-2\pi v+g_p(-v)\right),&\quad {\it for} \,\, v\in[-(1/4)^{1/p},0],
\end{aligned}\end{equation}
where $g_p : [0,(1/4)^{1/p}] \rightarrow \R$ is the function defined by \eqref{eq:g} above.
\end{thm}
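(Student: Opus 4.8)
The plan is to exhibit an explicit symplectomorphism from $\interior({\mathbb X}_p)$ onto $\interior(X_{\Omega_p})$ by using the Hamiltonian $\mathbb{T}^2$ action that is hidden inside the Lagrangian product, just as in~\cite{bidisk} for the case $p=\infty$. First I would consider the function $K(\xx,\yy) = \|\xx\|^p + \|\yy\|^p$ on $\R^4$; its level sets foliate ${\mathbb X}_p$, and $K$ generates a circle-like flow. More precisely, I would look for an integrable system on $\interior({\mathbb X}_p)$ whose two commuting Hamiltonians $H_1, H_2$ have periodic flows, so that the map $(H_1,H_2)$ serves as a moment map realizing ${\mathbb X}_p$ as a toric domain. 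The natural first Hamiltonian is the ``angular momentum'' type quantity $I = x_1 y_2 - x_2 y_1$ (or, in the polar coordinates of the two $\R^2$-factors, the difference of the two angular variables times an appropriate normalization), whose flow rotates the two discs against each other; this is exactly where the $2\pi v$ shift in~\eqref{eq:curve} comes from. The second Hamiltonian will be (a function of) $K$ itself, or rather the action variable conjugate to the flow of $K$ restricted to a level set, and computing this action variable by the formula $\oint y\,dx$ over the appropriate loop is what produces the integral defining $g_p$ in~\eqref{eq:g}.

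The key steps, in order, are: (1) Pass to polar coordinates $\xx = r_1(\cos\theta_1,\sin\theta_1)$, $\yy = r_2(\cos\theta_2,\sin\theta_2)$, so that $\omega_0 = r_1\,dr_1\wedge d\theta_1 + r_2\,dr_2\wedge d\theta_2$, and introduce the conjugate momenta $\rho_i = \pi r_i^2$; then ${\mathbb X}_p$ is the set where $(\rho_1/\pi)^{p/2} + (\rho_2/\pi)^{p/2} < 1$, fibered over the torus $(\theta_1,\theta_2)$. (2) Change variables to $\psi_1 = \theta_1 - \theta_2$, $\psi_2 = \theta_2$ (a $\mathbb{Z}$-linear, hence symplectic after adjusting momenta, change), producing new momenta $w_1$ (the angular-momentum variable) and $w_2$ (the total action); $w_2$ is automatically $2\pi$-periodic. (3) For fixed $w_1 = 2\pi v$, the reduced space is an interval in the $(\rho_1,\rho_2)$ region cut out by the two constraints $\rho_i > 0$, $\rho_1\rho_2 \ge (2\pi v)^2$-type inequality coming from $r_1 r_2 \sin\psi_1$ and $(\rho_1/\pi)^{p/2}+(\rho_2/\pi)^{p/2}<1$; computing the action $w_2$ as the area $\frac{1}{2\pi}\oint \rho\,d\psi$ swept on this reduced slice yields precisely $g_p(v)$ after substituting $r = (\rho/\pi)^{1/2}$ and simplifying — the limits of integration $(\tfrac12 \mp \sqrt{\tfrac14 - v^p})^{1/p}$ are exactly the two roots of the boundary equation on the slice. (4) Assemble: the image of the moment map $(w_1, w_2)$ is bounded below by $w_2 = g_p(v)$ on the region $w_1 \ge 0$ and symmetrically for $w_1 \le 0$, and the upper boundary is obtained by adding the full ``height'' $2\pi v$ in the first coordinate, which reproduces~\eqref{eq:curve}. (5) Invoke the Arnold--Liouville theorem to conclude that $(w_1, w_2)$, together with the angle variables, gives a symplectomorphism onto $\interior(X_{\Omega_p})$, and finally a rotation/relabeling of the two toric coordinates puts $\Omega_p$ in the stated standard position in $\R_{\ge 0}^2$.

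I expect the main obstacle to be Step~(3): carrying out the symplectic reduction cleanly and verifying that the action integral over the reduced one-dimensional slice really equals $g_p(v)$ as written, including getting the correct integration limits and checking that the integrand $\sqrt{(1-r^p)^{2/p} - v^2/r^2}$ emerges (the $v^2/r^2$ term is the centrifugal contribution from fixing the angular momentum). One must be careful that the reduced phase space is genuinely an interval — i.e., that the set $\{(\rho_1,\rho_2) : (\rho_1/\pi)^{p/2} + (\rho_2/\pi)^{p/2} < 1,\ \rho_1\rho_2 > (2\pi v)^2 \}$ is connected and its boundary meets the constraint curve in exactly two points — and that the flow of $w_2$ is periodic with the correct period, so no rescaling by a factor depending on $v$ sneaks in. A secondary subtlety is that the moment map is only defined on $\interior({\mathbb X}_p)$ (where $r_1, r_2 > 0$ and the angle variables are honest coordinates), which is why the statement is about interiors; the behavior on the boundary and at $\xx = 0$ or $\yy = 0$ need not be addressed. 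Once Step~(3) is in hand, Steps~(4) and~(5) are bookkeeping, and the continuity and limiting claims from the Remark follow from standard properties of the Gamma function and dominated convergence.
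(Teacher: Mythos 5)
Your overall strategy is the same as the paper's: exploit the commuting pair consisting of the defining Hamiltonian $\Vert\xx\Vert^p+\Vert\yy\Vert^p$ and the angular momentum $V=y_1x_2-y_2x_1$, compute the action variables $\int_{\gamma}\lambda$ over the two generating cycles of each Liouville torus (the radial part of the integral producing $g_p$, the angular part producing the $2\pi v$ shift), and invoke the Arnold--Liouville theorem. Your steps (1)--(4) are essentially the computation carried out in Section~\ref{Sec-Integrable-systems}, and the reduced-slice integral you describe does reproduce \eqref{eq:g} with the stated limits of integration.

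The genuine gap is in your closing paragraph, where you assert that ``the behavior on the boundary and at $\xx=0$ or $\yy=0$ need not be addressed.'' This is incorrect on two counts. First, the loci $\{\xx=0\}$, $\{\yy=0\}$, the origin, and the critical set of $(H_p,V)$ (the circles on which $\xx\perp\yy$ and $|V|$ is extremal on a level set of $H_p$) all lie in the \emph{interior} of ${\mathbb X}_p$; the word ``interior'' in the statement only excludes $\partial{\mathbb X}_p=H_p^{-1}(1)$. Arnold--Liouville yields a symplectomorphism only over the regular values with compact connected (hence torus) fibers, so your construction identifies only a dense open subset of $\interior({\mathbb X}_p)$ with a dense open subset of $X_{\Omega_p}$, and that does not determine the symplectomorphism type of the whole domain. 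The paper must, and does, extend the map across the singular fibers: via Eliasson's normal form near the elliptic circles $|v|=(h/2)^{2/p}$ and via the Gromov--McDuff theorem at the origin, following \cite[Lemma 35]{ramossepe} and the proof of \cite[Theorem 3]{bidisk}. Second, for $1\le p<2$ the function $\Vert\xx\Vert^p+\Vert\yy\Vert^p$ is not even $C^1$ along $\{\xx=0\}\cup\{\yy=0\}$ (and for most $p$ it is not $C^\infty$ there), so Arnold--Liouville cannot be applied to it directly near that locus. The paper circumvents this by replacing $H_p$ with smooth approximations $H_p^{\epsilon}$, running the action-angle construction for each $\epsilon$, and passing to the limit $(h,\epsilon)\to(1,0)$ through a nested exhaustion ${\mathbb X}_p=\bigcup_n U^{1-\epsilon_n,\epsilon_n}$ with compatible symplectomorphisms $\Phi_{n+1}|_{U^{1-\epsilon_n,\epsilon_n}}=\Phi_n$. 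Your proposal needs both ingredients --- the smoothing and the extension over the degenerate orbits --- before it becomes a proof.
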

The following analogous result for ${\mathbb X}_\infty$ was shown in \cite{bidisk}.
\begin{thm}[{{\cite[Theorem 3]{bidisk}}}]
The domain ${\mathbb X}_\infty$ is symplectomorphic to the toric domain $X_{\Omega_\infty}$, where $\Omega_\infty$ is the relatively open set in $\R_{\ge0}^2$ bounded by the coordinate axes and the curve parametrized by
\begin{equation}\label{eq:curve2}
2\left(\sqrt{1-v^2}+v(\pi-\arccos v),\sqrt{1-v^2}-v\arccos v\right),\quad {\it for} \, \, v\in[-1,1].
\end{equation}
\end{thm}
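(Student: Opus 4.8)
The plan is to realize $\interior({\mathbb X}_\infty)$ as a toric domain by the same integrable-systems scheme used in Theorem~\ref{thm:toric}, specialized to the degenerate case $p=\infty$, where the smooth energy hypersurface present for finite $p$ is replaced by the billiard flow in the unit disc. Regard $\xx$ as position and $\yy$ as the conjugate momentum, so that $\omega_0=d\xx\wedge d\yy$ and ${\mathbb X}_\infty=\{\|\xx\|<1\}\times\{\|\yy\|<1\}$. The domain is invariant under the diagonal rotation $(\xx,\yy)\mapsto(R_\theta\xx,R_\theta\yy)$, which one checks is symplectic and has as its moment map the angular momentum $L=x_1y_2-x_2y_1$. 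Introducing canonical polar coordinates $(s,\alpha,p_s,p_\alpha)$ in which $\omega_0=ds\wedge dp_s+d\alpha\wedge dp_\alpha$, with $s=\|\xx\|$, $\alpha=\arg\xx$, $p_\alpha=L$ and $\|\yy\|^2=p_s^2+p_\alpha^2/s^2$, the domain becomes $\{\,0<s<1,\ p_s^2+p_\alpha^2/s^2<1\,\}$, and $L=p_\alpha$ together with $\|\yy\|^2$ Poisson-commute, foliating the interior by Lagrangian tori off a lower-dimensional singular set.

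Here $\alpha$ is the angle conjugate to the first action $p_\alpha=L$, which is already $2\pi$-periodic; fixing $v:=L\in[0,1)$ and reducing by the rotation leaves the planar region $\{(s,p_s): 0<s<1,\ p_s^2+v^2/s^2<1\}$. The largest invariant torus fitting inside ${\mathbb X}_\infty$ corresponds to the outer boundary of this region, namely the loop formed by the two arcs $p_s=\pm\sqrt{1-v^2/s^2}$, $s\in[v,1]$, closed off along the wall $s=1$: the radial coordinate runs from the caustic radius $s=v$, where $p_s=0$, out to $s=1$, where $\|\xx\|=1$ and the trajectory reflects with $p_s\mapsto-p_s$. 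This reflection is precisely the disc-billiard bounce, and it takes the place of the second smooth turning point occurring for finite $p$ in \eqref{eq:g}.

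The transverse action is the area enclosed by this loop, the wall segment contributing nothing since $ds=0$ there:
\[
\oint p_s\,ds=2\int_{v}^{1}\sqrt{1-\frac{v^2}{s^2}}\,ds
=2\Big[\sqrt{s^2-v^2}-v\arccos\tfrac{v}{s}\Big]_{v}^{1}
=2\sqrt{1-v^2}-2v\arccos v,
\]
which is exactly $g_\infty(v)$, the second coordinate of the parametrization \eqref{eq:curve2}; the first coordinate $2\pi v+g_\infty(v)$ is the sum of this radial action and the rotation action $\oint\lambda=2\pi v$ of the diagonal orbit, evaluated on the Liouville primitive $\lambda=\tfrac12\sum_i(x_i\,dy_i-y_i\,dx_i)$. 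Thus for $v\in[0,1]$ the outer boundary of the moment image is traced out as the half of \eqref{eq:curve2} with $\mu_1\ge\mu_2$, and the reflection $L\mapsto-L$, which interchanges the roles of $\|\xx\|$ and $\|\yy\|$, produces the half $v\in[-1,0]$; together they give the full curve \eqref{eq:curve2} and identify the moment image with $\Omega_\infty$.

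The main difficulty is the global assembly rather than this computation: one must promote the action-angle data to an honest symplectomorphism $\interior({\mathbb X}_\infty)\to\interior(X_{\Omega_\infty})$ across the singular strata, and verify that the billiard reflection at the corner locus $\{\|\xx\|=\|\yy\|=1\}$ does not obstruct smoothness of the identification, nor the unimodular change of basis sending the pair (diagonal cycle, radial cycle) to the standard toric cycles. I would carry this out as in \cite{bidisk}, checking that the constructed coordinates agree with the standard toric angle coordinates on the complement of the coordinate axes and extend continuously across them; alternatively, one may try to deduce the statement as the limit $p\to\infty$ of Theorem~\ref{thm:toric}, using that $g_p\to g_\infty$ and that \eqref{eq:curve} converges to \eqref{eq:curve2}, though this route still requires showing that the symplectomorphisms themselves, and not merely the domains, converge.
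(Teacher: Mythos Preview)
This theorem is not proved in the present paper: it is quoted verbatim from \cite{bidisk} as background, so there is no ``paper's own proof'' to compare against here. That said, your sketch is correct and is exactly the argument of \cite{bidisk}, which in turn is the $p=\infty$ counterpart of the paper's proof of Theorem~\ref{thm:toric}. Your action computation $2\int_v^1\sqrt{1-v^2/s^2}\,ds=2(\sqrt{1-v^2}-v\arccos v)$ is right and reproduces the second coordinate of~\eqref{eq:curve2}, and the first coordinate is obtained by adding the rotation action $2\pi v$, exactly as in~\eqref{eq:phieps}. You also correctly identify the two genuine technical points that remain: (i) extending the action--angle symplectomorphism across the singular fibers (in \cite{bidisk}, as in the present paper, this is handled via Eliasson's normal form near the elliptic fibers and the Gromov--McDuff theorem at the origin), and (ii) the fact that for $p=\infty$ the second turning point is a billiard reflection rather than a smooth caustic. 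One minor inaccuracy: the map $L\mapsto -L$ does not literally interchange $\|\xx\|$ and $\|\yy\|$; the relevant symmetry producing the $v\in[-1,0]$ half of the curve is simply that the computation for $v<0$ is identical with $|v|$ in place of $v$, and the two action cycles swap roles, giving the reflection about the diagonal noted in Remark~\ref{rmk:refl}. Your alternative suggestion of taking $p\to\infty$ in Theorem~\ref{thm:toric} is natural but, as you say, would require controlling the symplectomorphisms and not just the moment images; the paper only uses this convergence at the level of the curves (Remark~\ref{rmk:refl}).
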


\begin{remark}\label{rmk:refl}
Note that the curve \eqref{eq:curve} is invariant under the reflection about the line $y=x$.
Moreover, a simple calculation shows that \eqref{eq:curve} converges to \eqref{eq:curve2} as $p \to\infty$.
\end{remark}

With some additional computational work, we further claim that:
\begin{prop}\label{prop:cc}
The toric domain $X_{\Omega_p}$ defined in Theorem~\ref{thm:toric} above is convex for $p\in[1,2]$, and concave for $p\in[2,\infty]$.
\end{prop}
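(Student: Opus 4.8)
The plan is to analyze the curve \eqref{eq:curve} directly and show that, after reflection about $y=x$, it is the graph of a function $\phi_p$ on an interval $[0,a_p]$ whose convexity type switches at $p=2$. Write the curve in the form $\bigl(X(v),Y(v)\bigr) = \bigl(2\pi v + g_p(v),\, g_p(v)\bigr)$ for $v\in[0,(1/4)^{1/p}]$; by Remark~\ref{rmk:refl} the full boundary curve is obtained by reflecting this arc, so it suffices to understand the sign of the curvature of this arc together with the behaviour at the endpoints. The slope of the curve is $Y'(v)/X'(v) = g_p'(v)/\bigl(2\pi + g_p'(v)\bigr)$, and a short computation gives the signed curvature as proportional to $X'Y'' - X''Y' = 2\pi\, g_p''(v)$. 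Hence the toric domain $X_{\Omega_p}$ is convex precisely when $g_p'' \le 0$ on $(0,(1/4)^{1/p})$ (the defining function $\phi_p$ is concave) and concave precisely when $g_p'' \ge 0$ there. So the whole proposition reduces to a \emph{single} assertion: $g_p$ is concave for $1\le p\le 2$ and convex for $p\ge 2$.

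To prove that sign change, I would differentiate the explicit integral \eqref{eq:g} for $g_p$. The integrand $\sqrt{(1-r^p)^{2/p} - v^2/r^2}$ vanishes exactly at the two limits of integration $r_\pm(v) = \bigl(\tfrac12 \pm \sqrt{\tfrac14 - v^p}\,\bigr)^{1/p}$, so when we differentiate under the integral sign the boundary terms drop out and we get
\begin{equation*}
g_p'(v) = -2v\int_{r_-(v)}^{r_+(v)} \frac{dr}{r^2\sqrt{(1-r^p)^{2/p} - v^2/r^2}}.
\end{equation*}
A natural substitution — for instance $s = r^p$, or better the angular-type substitution that trivializes the billiard/action geometry underlying Theorem~\ref{thm:toric} — should turn this into an integral whose $v$-dependence is transparent, ideally of the form $g_p'(v) = -\!\int_0^{\text{something}} h_p(v,t)\,dt$ with $\partial_v h_p$ of one sign depending on whether $p\lessgtr 2$. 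Differentiating once more and controlling the (now nonvanishing) endpoint contributions against the interior integral is where the real work lies. An alternative, possibly cleaner route: use that the $v$-derivative of $g_p$ has a direct geometric meaning (it is essentially minus an action/angle variable for the integrable system used to prove Theorem~\ref{thm:toric}), so that $g_p''$ has a sign governed by a monotonicity property of the period of the corresponding oscillator, which degenerates exactly at $p=2$ because the $\ell_2$ case is the harmonic (isochronous) one.

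I would then handle the endpoints and the limiting cases separately: at $v=0$ one has $r_-(0)=0$, $r_+(0)=1$ and $g_p(0) = A(p)/2$ (half the $\ell_p$-area), which pins down where $\phi_p$ meets the axes and shows the curve \eqref{eq:curve} does close up into the graph of a genuine convex/concave function; the cases $p=2$ (where $g_2$ should be \emph{affine}, consistent with $X_{\Omega_2}$ being simultaneously convex and concave — indeed an ellipsoid) and $p=\infty$ (matching \eqref{eq:curve2}, already known from \cite{bidisk}) serve as consistency checks. The main obstacle I anticipate is the second-derivative estimate: once one differentiates $g_p'$ the endpoints $r_\pm(v)$ no longer kill the boundary terms, so one must show that a delicate combination of a boundary term and a singular interior integral has a definite sign, uniformly in $v$, with the sign flipping at $p=2$. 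Finding the right change of variables to make that manifest — presumably the same one that makes \eqref{eq:g} computable in closed form in the known cases — is the crux; everything else is bookkeeping and continuity/limit arguments.
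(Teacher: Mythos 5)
Your reduction is exactly the one the paper uses: parametrize the arc as $(2\pi v+g_p(v),\,g_p(v))$, note that $y''x'-x''y'=2\pi g_p''(v)$, and conclude that the convexity type of $X_{\Omega_p}$ is governed by the sign of $g_p''$ (together with the check that $x'(v)=2\pi+g_p'(v)>0$, so the arc really is a graph of a decreasing function). Your formula for $g_p'$ also agrees with the paper's. But the proposal stops exactly where the proof begins: the single assertion everything reduces to --- $g_p$ concave for $1\le p\le 2$ and convex for $p\ge 2$ --- is never established. You correctly identify that differentiating the variable-endpoint integral a second time produces nonvanishing boundary terms competing with a singular interior integral, and you defer the resolution to an unspecified ``right change of variables''. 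That substitution is the entire analytic content of the proposition, so as written this is a genuine gap, not bookkeeping.

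For comparison, the paper never computes $g_p''$ at all. Instead it shows $g_p'$ is monotone in $v$ directly: substituting $w=r^p-\tfrac12$ and then $x=v^{p/2}w/\sqrt{\tfrac14-v^p-w^2}$ converts $g_p'(v)$ into an integral over the \emph{fixed} domain $[0,\infty)$,
\[
g_p'(v)=-\frac{2}{p}\int_0^\infty\frac{1}{\frac14+x^2}\,\sqrt{\frac{u-1}{u^{2/p}-1}}\,dx,
\qquad u=\frac{\frac14+x^2}{v^p+x^2},
\]
after which monotonicity in $v$ is read off pointwise from the integrand: $u$ is decreasing in $v$ for each fixed $x$, and $u\mapsto(u-1)/(u^{2/p}-1)$ is decreasing for $p<2$ and increasing for $p>2$. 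Because the domain of integration no longer depends on $v$, no second differentiation and no boundary terms ever appear. The same representation also yields the endpoint limits $g_p'\to-\pi$ and $g_p'\to-\sqrt{2/p}\,\pi$, which give the bound $g_p'\ge-\sqrt{2}\,\pi>-2\pi$ needed for the graph property, as well as the affine case $g_2(v)=\pi/2-\pi v$ that you invoke as a consistency check. If you want to complete your argument, this fixed-domain rewriting is the missing ingredient.
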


Figure \ref{fig:cc} shows the set $\Omega_p$ for $p=1,2,6$. 
One can directly check that $\Omega_2$ is a right triangle, which reflects the fact that ${\mathbb X}_2$ is the Euclidean ball. 

\begin{figure}
\centering
\begin{subfigure}[t]{0.32\textwidth}
\centering
\includegraphics[scale=0.7]{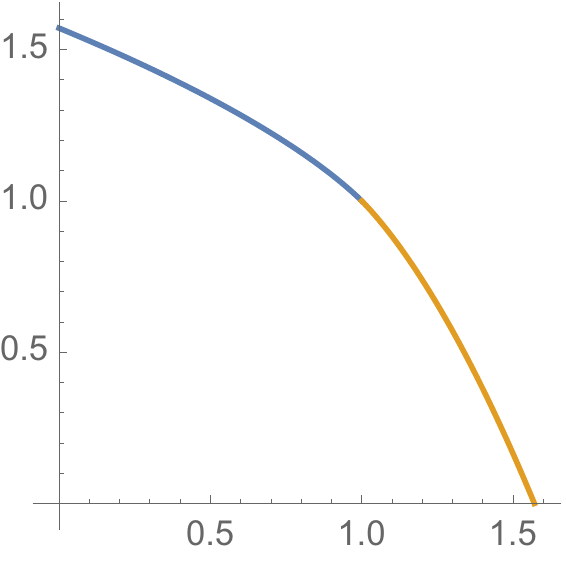}
\caption{$p=1$}
\end{subfigure}
\begin{subfigure}[t]{0.32\textwidth}
\centering
\includegraphics[scale=0.7]{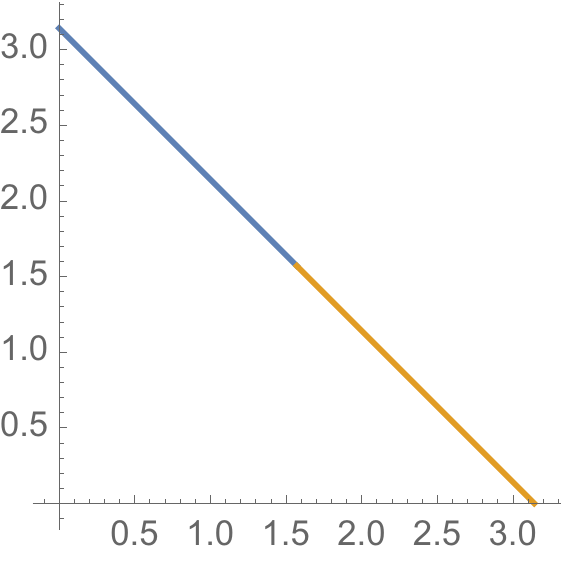}
\caption{$p=2$}
\end{subfigure}
\begin{subfigure}[t]{0.32\textwidth}
\centering
\includegraphics[scale=0.7]{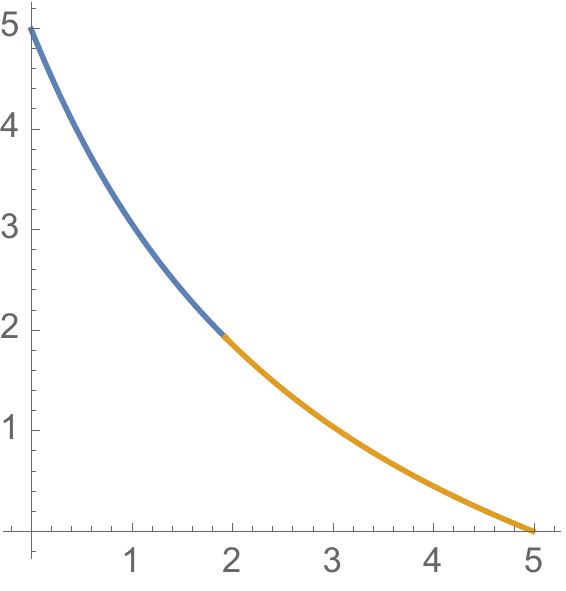}
\caption{$p=6$}
\end{subfigure}
\caption{The set $\Omega_p$ for different values of $p$}\label{fig:cc}
\end{figure}

\subsection{The rigidity and flexibility of the embeddings}

In this section we discuss certain rigidity and flexibility phenomena of the embeddings described in Theorem~\ref{thm:main}, and explain the significance of the specific values of $p$ appearing in that theorem.  
We start with the following notions of symplectic embedding rigidity, which for the purpose of this paper we state only in $\R^4$. 
\begin{definition}
Let $X_1$ and $X_2$ be subdomains of $\R^{4}$. 
\begin{enumerate}[(a)]
\item The symplectic embedding problem $X_1\overset{?}{\hookrightarrow}X_2$
is said to be {\it rigid} if \[X_1\hookrightarrow  r  X_2\iff X_1\subseteq r  X_2.\]
\item The symplectic embedding problem $X_1\overset{?}{\hookrightarrow}X_2$
is said to be {\it torically rigid} if the embedding $\widetilde{X}_1\overset{?}{\hookrightarrow}\widetilde{X}_2$ is rigid, where $\widetilde{X}_1$ and $\widetilde{X}_2$ are two toric domains whose interiors are symplectomorphic to the interiors of $X_1$ and $X_2$, respectively.

\item The symplectic embedding problem $X_1\overset{?}{\hookrightarrow}X_2$
is said to be {\it non-rigid} if it is neither rigid, nor torically rigid.
\end{enumerate}
\end{definition}
With this terminology, we can now state the rigidity of the symplectic embeddings  from Theorem \ref{thm:main} as follows.
\begin{thm}\label{thm:rigid} Let $B \subset \R^4$ be a Euclidean ball, and let ${\mathbb X}_p$ be the $\ell_p$-sum of two Lagrangian discs given by~$(\ref{p-sum-of-discs})$. Then, 
\quad
\begin{enumerate}[(a)]
\item The symplectic embedding $B \overset{?}{\hookrightarrow} {\mathbb X}_p$ is rigid for $1 \leq p \leq 2$.

\item The symplectic embedding $B \overset{?}{\hookrightarrow} {\mathbb X}_p$ is torically rigid for all $p\ge 1$.

\item The symplectic embedding ${\mathbb X}_p \overset{?}{\hookrightarrow} B$ is rigid for $2 \leq p \leq 9/2$.
\item 
The symplectic embedding ${\mathbb X}_p \overset{?}{\hookrightarrow} B$ is torically rigid for $1 \leq p \leq 9/2$.

\item 
The symplectic embedding ${\mathbb X}_p \overset{?}{\hookrightarrow} B$ is non-rigid for $p>9/2$.
\end{enumerate}
\end{thm}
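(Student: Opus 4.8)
The plan is to combine Theorem~\ref{thm:toric} and Proposition~\ref{prop:cc}, which identify $\mathbb{X}_p$ with a convex (for $p\le 2$) or concave (for $p\ge 2$) toric domain $X_{\Omega_p}$, with the ECH-capacity obstructions and the known optimal embedding constructions collected in the proof of Theorem~\ref{thm:main}. Items (a)--(d) are essentially a matter of reading off what Theorem~\ref{thm:main} already gives us. For (a), when $1\le p\le 2$ the ball $B^4(r)$ with $r=r_S(\mathbb{X}_p)=2\pi(1/4)^{1/p}$ is the \emph{maximal} ball that embeds, and one checks that $B^4(2\pi(1/4)^{1/p})$ (as a standard-form ball) is literally contained in $\mathbb{X}_p$ — indeed $\Omega_p$ contains the triangle with legs $2\pi(1/4)^{1/p}$ because the curve \eqref{eq:curve} passes through the point with both coordinates equal to $g_p((1/4)^{1/p})=0$ shifted appropriately, so the "if" direction of rigidity is a direct inclusion check, while the "only if" is exactly the content of the inner-radius computation. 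For (c), the same logic applies with $R_S(\mathbb{X}_p)=2\pi(1/4)^{1/p}$ for $2\le p\le 9/2$: the value of the outer radius in this range is \emph{not} smaller than what the naive inclusion $\mathbb{X}_p\subset B^4(r)$ forces, so scaling $B$ down to touch gives both directions. For (b) and (d), "torically rigid" only asks for rigidity of the toric models $X_{\Omega_p}\hookrightarrow rB^4$ (or $rB^4\hookrightarrow X_{\Omega_p}$); since $X_{\Omega_p}$ is convex or concave, the relevant ECH obstruction is sharp by the results of \cite{cgcc} (for concave-into-ball and ball-into-convex the ECH capacities form a complete set of invariants up to the monotone threshold), and one verifies that the obstruction is achieved precisely at the inclusion threshold. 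I would organize (a)--(d) as four short paragraphs, each citing the corresponding line of Theorem~\ref{thm:main} and the inclusion computation.

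The substantive claim is (e): for $p>9/2$ the embedding $\mathbb{X}_p\hookrightarrow B^4(r)$ is neither rigid nor torically rigid. Non-rigidity of the original problem is immediate once (e)'s toric part is established, but it also follows directly from Theorem~\ref{thm:main}: the third line of the formula for $R_S(\mathbb{X}_p)$ is \emph{strictly smaller} than $2\pi(1/4)^{1/p}$ (which is the value forced by plain set inclusion, since $\Omega_p\subset[0,2\pi(1/4)^{1/p}]^2$ with equality of the bounding box at $v=(1/4)^{1/p}$), so one can symplectically squeeze $\mathbb{X}_p$ into a strictly smaller ball than any translate/inclusion allows — that is exactly the failure of "$X_1\hookrightarrow rX_2\iff X_1\subseteq rX_2$". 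So the heart of (e) is to show that even the toric model fails to be rigid, i.e. that $X_{\Omega_p}\hookrightarrow rB^4$ for some $r$ strictly below the inclusion threshold for $X_{\Omega_p}$ itself. This is where the concavity from Proposition~\ref{prop:cc} and the sharp embedding results of \cite{busepin,lili,mcduffblowup} enter: for a concave toric domain the optimal ball embedding is governed by a weight decomposition / ball-packing problem, and one must exhibit a decomposition realizing a radius $r = 2\pi(g_p')^{-1}(-2\pi/3)+3g_p((g_p')^{-1}(-2\pi/3))$ that beats the inclusion bound precisely when $p>9/2$.

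The main obstacle, and the reason $p=9/2$ is the transition, is pinning down exactly when this optimal-embedding value drops below the inclusion value. Concretely, the outer radius of a concave toric domain $X_\Omega$ is the solution of an optimization over "how much of the corner near the diagonal can be folded back," and the relevant first-order condition is $g_p'(v)=-2\pi/3$; this has a solution in the admissible range $v\in[0,(1/4)^{1/p}]$ if and only if $-2\pi/3$ lies in the image of $g_p'$, which by Proposition~\ref{lem:properties-of-gp} happens exactly for $p\ge 9/2$. At $p=9/2$ the critical point sits at the boundary $v=(1/4)^{1/p}$, where the "folded" configuration degenerates to the trivial inclusion and the two formulas for $R_S$ agree (continuity at $p=9/2$, as noted in the Remark); for $p>9/2$ the critical point moves into the interior, the folded embedding is strictly more efficient, and rigidity fails. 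So the proof of (e) reduces to: (i) invoke Proposition~\ref{lem:properties-of-gp} to get the interior critical point for $p>9/2$; (ii) use \cite{busepin,lili,mcduffblowup} to realize the corresponding embedding with $r$ equal to the third-line formula; (iii) check this $r$ is strictly less than both $2\pi(1/4)^{1/p}$ and the concave-toric inclusion threshold, so neither rigidity nor toric rigidity can hold. Step (ii) — translating the critical-point data into an actual symplectic embedding via the ball-packing/blow-up correspondence — is the technically delicate part, but it is exactly the machinery already assembled for the upper-bound half of Theorem~\ref{thm:main}, so here it can be quoted rather than redone.
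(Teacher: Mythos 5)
Your treatment of (a)--(d) follows the same route as the paper: obstructions from $c_1$ and $c_2$ of the toric model (Proposition~\ref{prop:cap}) and constructions by literal inclusion (a H\"older computation for $B(r)\subset{\mathbb X}_p$ and ${\mathbb X}_p\subset B(r)$, and the triangle inclusions $T(A(p))\subseteq\Omega_p$, resp.\ $\Omega_p\subseteq T(A(p))$, coming from concavity/convexity and the symmetry of the curve \eqref{eq:curve}). Two caveats. First, you cannot simply ``cite the corresponding line of Theorem~\ref{thm:main}'': the two theorems are proved simultaneously, so the values of $r_S$ and $R_S$ are outputs of this argument, not inputs. Second, your appeal to the sharpness of ECH capacities ``for concave-into-ball and ball-into-convex'' does not cover part (d) of the theorem in the range $1\le p\le 2$, where $X_{\Omega_p}$ is a \emph{convex} toric domain being embedded into a ball; ECH capacities are not sharp for that class of problems in general (polydiscs into balls being the standard counterexample), and the paper instead closes this case by the direct inclusion $\Omega_p\subseteq T(A(p))$.

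The genuine gap is in (e). You correctly reduce everything to exhibiting a symplectic embedding $X_{\Omega_p}\hookrightarrow B(c)$ with $c=c_2(X_{\Omega_p})=2\pi(g_p')^{-1}(-2\pi/3)+3g_p\bigl((g_p')^{-1}(-2\pi/3)\bigr)<2\pi(1/4)^{1/p}$, and your explanation of why $p=9/2$ is the transition (the critical point of $g_p'=-2\pi/3$ entering the admissible interval, via Lemma~\ref{lem:properties-of-gp}(e),(f)) is accurate. But you then defer the construction of this embedding to ``the machinery already assembled for the upper-bound half of Theorem~\ref{thm:main}.'' That is circular: for $p>9/2$ the upper bound $R_S({\mathbb X}_p)\le c_2(X_{\Omega_p})$ \emph{is} this embedding, and there is nothing earlier to quote. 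Theorem~\ref{thm:cgcc} only reduces the question to the ball packing $\bigsqcup_i B(w_i)\hookrightarrow B(c_2)$ for the full weight sequence of $\Omega_p$, and the existence of that packing is not automatic. It requires (i) the volume inequality $\mathrm{vol}(X_{\Omega_p})\le\tfrac12 c_2(X_{\Omega_p})^2$, and (ii) showing that the ordered vector $(w_1+w_2;w_1,w_2,w_2,w_3,w_3,\dots)$ reduces under Cremona moves. The paper establishes (ii) for a general symmetric concave domain in Proposition~\ref{prop:flex} under the hypothesis $\tau(\Omega_1)\ge\tau(\Omega_{11})+\tau(\Omega_{111})$, and then Proposition~\ref{prop:emb} verifies this hypothesis and the volume bound uniformly for $p>9/2$ by a monotonicity-in-$p$ analysis of the weights (showing $w_2(p)$ and $d(p)=\tau(\Omega_{11})+\tau(\Omega_{111})$ are increasing and comparing $d(\infty)<0.69<0.85<w_2(9/2)$). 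None of this appears in your proposal, so the flexible embedding --- the entire content of (e) and of the last line of Theorem~\ref{thm:main} --- is asserted rather than proved.
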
 
\begin{remark} {\rm
From our point of view, the most surprising part of Theorem \ref{thm:rigid} is the change of behavior of the embedding question ${\mathbb X}_p \overset{?}{\hookrightarrow} B$ at the value $p=9/2$. Note that there is no change of convexity of the toric image at this value of $p$, and that there is no a priori reason for this transition from rigidity to flexibility of the embedding. We refer the reader to Lemma~\ref{lemma:c1c2} and Proposition~\ref{prop:cap} below for more details on the appearance of the value $p=9/2$ in this setting.

}
\end{remark}

\medskip

\subsection{The $\ell^p$-sum of two symplectic discs}
As a natural counterpart of the above results, in this section
we discuss the analogues of Theorems \ref{thm:main} and \ref{thm:rigid} for the $\ell^p$-sum of two equal-size symplectic discs in ${\mathbb R}^4$. More precisely, for $p > 0$ let
\[{\mathbb B}_p({\mathbb C}^2)=\left\{(z_1,z_2)\in\C^2\mid \pi^{p/2}\left(\Vert z_1 \Vert^p+\Vert z_2\Vert^p\right)<1\right\}.\]
Note that 
as $p\to\infty$, the set ${\mathbb B}_p({\mathbb C}^2)$ converges to the symplectic polydisc $$ {\mathbb B}_{\infty}({\mathbb C}^2) := \{ (z_1,z_2)\in\C^2\mid \max \{ \pi\|z_1\|^2, \pi \|z_2\|^2 \} <1\}.$$ 
We remark that in the symplectic literature  ${\mathbb B}_{\infty}({\mathbb C}^2) $ is usually denoted by $P(1,1)$.
It  follows directly from the definition that ${\mathbb B}_p({\mathbb C}^2)$  is a toric domain $X_{\Lambda_p}$, where $\Lambda_p$ is the relatively open set in $\R_{\ge 0}^2$ bounded by the coordinate axes and the curve $$x^{p/2}+y^{p/2}=1 \ \ ({\rm or \ } \max \{ \|x\|, \|y\| \} =1 \ {\rm for \ } p=\infty).$$ Thus, ${\mathbb B}_p({\mathbb C}^2)$ is concave if $0<p\le 2$, and convex if $p\ge 2$. Moreover, it is clear that in this case the notions of rigidity and toric rigidity coincide. 

\begin{thm}\label{thm:symplp}
Let $B \subset \R^4$ be the Euclidean unit ball. Then
\begin{enumerate}[(a)]
\item The symplectic embedding  $B \overset{?}{\hookrightarrow} {\mathbb B}_p({\mathbb C}^2)$ is rigid for all $p \geq 1$, and \[B(c)\hookrightarrow {\mathbb B}_p({\mathbb C}^2) \iff  c \leq \min \{ 1,2^{1-2/p} \}.\]

\item The symplectic embedding  ${\mathbb B}_p({\mathbb C}^2) \overset{?}{\hookrightarrow} B$ is rigid for $p\ge 2$, and  
\[{\mathbb B}_p({\mathbb C}^2)\hookrightarrow B(c)\iff c\ge 2^{1-2/p}.\]

\item 
The symplectic embedding $ {\mathbb B}_p({\mathbb C}^2)\overset{?}{\hookrightarrow} B$ is non-rigid for $1\le p< 2$ and
\[{\mathbb B}_p({\mathbb C}^2)\hookrightarrow B(c)\iff c\ge 
\Bigl (1 + 2^{\frac{p}{p - 2}} \Bigr)^{1-2/p}.\]
\end{enumerate}

\end{thm}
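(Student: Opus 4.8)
The plan is to treat ${\mathbb B}_p({\mathbb C}^2) = X_{\Lambda_p}$ directly as a toric domain, using the fact that $\Lambda_p$ is concave for $0<p\le 2$ and convex for $p\ge 2$, and that the ball $B(c)$ corresponds to the triangle with vertices $(0,0)$, $(c,0)$, $(0,c)$. For part (b), where both domains are convex toric domains ($p\ge 2$), I would invoke the results of~\cite{cgcc} on embeddings of convex toric domains into balls: the obstruction coming from ECH capacities (equivalently, from the volume and the first capacities) is sharp, and it reduces to the elementary inclusion statement that $X_{\Lambda_p}\subseteq B(c)$ iff the defining curve $x^{p/2}+y^{p/2}=1$ lies under the line $x+y=c$. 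A direct Lagrange-multiplier computation shows the maximum of $x+y$ on that curve is attained at $x=y=2^{-2/p}$, giving $x+y = 2\cdot 2^{-2/p}=2^{1-2/p}$; hence $c\ge 2^{1-2/p}$, and since $2^{1-2/p}\ge 1$ for $p\ge 2$ the inclusion is also the optimal embedding. Toric rigidity then upgrades this to rigidity since here the two notions coincide. For part (a) I would argue symmetrically: $B(c)=X_{\Lambda}$ with $\Lambda$ the triangle of leg $c$ embeds into the convex (for $p\ge2$) or concave (for $p<2$) domain $X_{\Lambda_p}$; by the embedding results for a ball into a convex toric domain (resp.\ a concave one, using~\cite{cgcc} and the monotonicity of ECH capacities) the sharp constraint is again inclusion of the triangle inside $\Lambda_p$, i.e.\ the point $(c,0)$ must satisfy $c^{p/2}\le 1$ and simultaneously the hypotenuse $x+y=c$ must stay below the curve; the binding constraint is $c\le\min\{1, 2^{1-2/p}\}$ (the first term from the concave/pinched regime near the axes, the second from the convex bulge), matching the claim.

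The genuinely new case is part (c), $1\le p<2$, where $X_{\Lambda_p}$ is a \emph{concave} toric domain and we embed it into a ball, so flexibility can beat inclusion. Here I would use the sharp embedding criterion for a concave toric domain into an ellipsoid/ball, which by~\cite{cgcc} (building on~\cite{mcduffblowup}, \cite{lili}) is governed entirely by ECH capacities, and for the specific target $B(c)$ reduces, via the weight-expansion / ball-packing translation, to a ball-packing problem: $X_{\Lambda_p}\hookrightarrow B(c)$ iff a certain disjoint collection of balls (the ``weight decomposition'' of $X_{\Lambda_p}$) packs into $B(c)$. The key computation is to identify the weight sequence of $X_{\Lambda_p}$ for $1\le p<2$ and to check that the volume constraint $\mathrm{vol}(X_{\Lambda_p})\le\mathrm{vol}(B(c))$, i.e.\ $\tfrac12 c^2 \ge \mathrm{Area}(\Lambda_p)$, is the \emph{only} binding obstruction — equivalently, that the relevant McDuff–Polterovich / Hutchings ECH obstructions are all dominated by volume. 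A direct integration gives $\mathrm{Area}(\Lambda_p)=\int_0^1 (1-x^{p/2})^{2/p}\,dx$; substituting $t=x^{p/2}$ and recognizing a Beta integral, one computes $\mathrm{Area}(\Lambda_p) = \tfrac{2}{p}B(2/p, 1+2/p)$, and after simplification this should equal $\tfrac12\bigl(1+2^{p/(p-2)}\bigr)^{2-4/p}$, which yields exactly $c\ge\bigl(1+2^{p/(p-2)}\bigr)^{1-2/p}$.

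The ``non-rigidity'' half of (c) is then immediate once the volume bound is shown sharp: for $1\le p<2$ one checks numerically/elementarily that $\bigl(1+2^{p/(p-2)}\bigr)^{1-2/p} < $ the inclusion constant (the max of $x+y$ on the curve, or the relevant size forcing $\Lambda_p\subseteq$ triangle), so the optimal embedding is strictly better than any inclusion — and since rigidity and toric rigidity coincide here, the problem is non-rigid.

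I expect the main obstacle to be part (c): specifically, verifying that for all $p\in[1,2)$ the entire family of ECH-capacity obstructions for $X_{\Lambda_p}\hookrightarrow B(c)$ collapses to the single volume inequality. This requires either exhibiting an explicit ball packing achieving the volume bound (most cleanly via the concave-toric-domain weight decomposition of~\cite{cgcc} together with McDuff's theorem~\cite{mcduffblowup} that ECH capacities are sharp for such embeddings), or a monotonicity argument comparing all $c_k(X_{\Lambda_p})$ against $c_k(B(c))$. The supporting identifications — the Beta-function evaluation of $\mathrm{Area}(\Lambda_p)$ and the Lagrange-multiplier optimum $2^{1-2/p}$ in parts (a),(b) — are routine, and the rigidity statements in (a) and (b) follow formally from the inclusion characterization together with the stated coincidence of rigidity and toric rigidity for the domains ${\mathbb B}_p({\mathbb C}^2)$.
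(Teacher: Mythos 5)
Your treatment of parts (a) and (b) is essentially the paper's: compute $c_1$ and $c_2$ of the toric domain $X_{\Lambda_p}$ via Lemma~\ref{lemma:c1c2} and match them with explicit inclusions $B(c_1)\subset{\mathbb B}_p({\mathbb C}^2)$ and ${\mathbb B}_p({\mathbb C}^2)\subset B(c_2)$ (the Lagrange-multiplier value $2^{1-2/p}$ is exactly the $2b$ of that lemma). Those parts are fine.

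Part (c), however, contains a genuine error: the optimal constant is \emph{not} the volume bound, and your claimed identity $\mathrm{Area}(\Lambda_p)=\tfrac12\bigl(1+2^{p/(p-2)}\bigr)^{2-4/p}$ is false. Test $p=1$: $\mathrm{Area}(\Lambda_1)=\int_0^1(1-\sqrt{x})^2\,dx=\tfrac16$, whereas $\tfrac12\bigl(1+2^{-1}\bigr)^{-2}=\tfrac12\cdot\tfrac49=\tfrac29$. So the ball $B(2/3)$ into which ${\mathbb B}_1({\mathbb C}^2)$ optimally embeds has strictly larger volume than ${\mathbb B}_1({\mathbb C}^2)$ (the paper's remark after Proposition~\ref{prop:lpellip} makes exactly this point: the embedding is volume filling into the ellipsoid $E(1/2,2/3)$, not into the ball). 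The true obstruction is the \emph{second ECH capacity}: for the concave domain $\Lambda_p$ one has $c_2=w_1+w_2=x_{-1/2}(p)=\bigl(1+2^{p/(p-2)}\bigr)^{1-2/p}$ by Lemma~\ref{lemma:c1c2}(c), and this exceeds the volume-derived lower bound. Consequently the hard direction of (c) is not ``check that all ECH obstructions are dominated by volume'' (they are not) but rather to \emph{construct} an embedding into $B(c_2)$. The paper does this via Proposition~\ref{prop:flex}: a Cremona-move analysis of the weight sequence $(w_1,w_2,w_2,w_3,w_3,\dots)$ showing the packing $\bigsqcup B(w_i)\hookrightarrow B(w_1+w_2)$ exists, for which one must verify the volume inequality $\mathrm{vol}(X_{\Lambda_p})\le\tfrac12 c_2^2$ (as a strict inequality, via the convexity of $x\mapsto xB(x,x)$) \emph{and} the weight condition $\tau(\Lambda_{p,1})\ge\tau(\Lambda_{p,11})+\tau(\Lambda_{p,111})$, i.e.\ inequality~\eqref{eq:2}. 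Your proposal is missing this entire construction; as written, the route through ``volume is the only binding obstruction'' would produce the wrong constant $\sqrt{2\,\mathrm{Area}(\Lambda_p)}$ and cannot be repaired without the ball-packing argument. The non-rigidity conclusion itself survives (since $c_2<1$, the inclusion constant, for $p<2$), but only once the correct sharp constant is established.
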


In the last case, we can actually prove a little more. Consider the ellipsoid
\[E(a,b)=\left\{(z_1,z_2)\in\C^2\mid \pi\left(\frac{|z_1|^2}{a}+\frac{|z_2|^2}{b}\right)<1\right\}.\]

\begin{prop}\label{prop:lpellip} With the above notations, 
\[{\mathbb B}_1({\mathbb C}^2) \hookrightarrow E(a,b)\iff \min(a,b)\ge 1/2 \ {\rm and} \ \max(a,b)\ge 2/3 .\]
\end{prop}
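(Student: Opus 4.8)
The plan is to use the fact, established via Theorem~\ref{thm:toric} (or rather its analogue for symplectic discs), that $\mathbb{B}_1(\mathbb{C}^2)$ is the toric domain $X_{\Lambda_1}$ whose base $\Lambda_1$ is the region under the curve $\sqrt{x}+\sqrt{y}=1$, i.e. a \emph{concave} toric domain (here $p=1<2$). Since $E(a,b)$ is itself a concave toric domain $X_\Lambda$ with $\Lambda$ the triangle with vertices $(0,0),(a,0),(0,b)$, I would invoke the theory of ECH capacities for concave toric domains: by the work of Choi--Cristofaro-Gardiner--Frenkel--Hutchings--Ramos and Hutchings, a concave toric domain symplectically embeds into another concave toric domain if (and in the relevant cases only if) all ECH capacities satisfy the required inequality, and for embeddings of a concave domain into an ellipsoid the ECH capacities are in fact a \emph{sharp} obstruction (this is the same input used to prove the obstruction half of Theorem~\ref{thm:main} in the range $p\ge 2$). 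So the problem reduces to a purely combinatorial comparison: $c_k(X_{\Lambda_1})\le c_k(E(a,b))$ for all $k\ge 0$.

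Concretely, I would first record the ECH capacities of $E(a,b)$: $c_k(E(a,b))$ is the $(k+1)$-st smallest entry (with multiplicity) of the sequence $\{ma+nb : m,n\ge 0\}$. For the concave domain $X_{\Lambda_1}$, the ECH capacities are computed by the lattice-point formula of Cristofaro-Gardiner: $c_k(X_{\Lambda_1}) = \min\{ \Omega(\Lambda) : \text{concave ``paths'' / lattice configurations enclosing } \ge k+1 \text{ points}\}$, or equivalently via the weight decomposition of the concave toric domain into a disjoint union of balls. I expect the cleanest route is the weight-expansion description: write $X_{\Lambda_1}$ as (symplectomorphic to a disjoint union, for capacity purposes, of) balls $B(w_1)\sqcup B(w_2)\sqcup\cdots$ where the weight sequence $(w_i)$ is determined by a continued-fraction-type algorithm applied to the curve $\sqrt x+\sqrt y=1$; then $c_k(X_{\Lambda_1}) = c_k\bigl(\bigsqcup_i B(w_i)\bigr)$, and the embedding into $E(a,b)$ is obstructed exactly by finitely many of these capacities. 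The first few weights and capacities will dictate the two stated inequalities: the condition $\min(a,b)\ge 1/2$ should come from the very first capacity $c_1$ (the ``width'' in each axis direction, the total area being normalized appropriately), and $\max(a,b)\ge 2/3$ from comparing a slightly higher capacity — I would identify exactly which $k$ produces the constant $2/3$ and verify that no other $k$ gives a stronger constraint.

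For the converse (constructing the embedding when both inequalities hold), I would argue that once the finitely many binding ECH inequalities are satisfied, all of them are, using monotonicity/convexity properties of the two capacity sequences (the kind of packing argument in~\cite{mcduffblowup}); alternatively one can exhibit the embedding directly by a ball-packing of $E(a,b)$ realizing the weight sequence of $X_{\Lambda_1}$, appealing to McDuff's theorem that ECH capacities are a complete set of obstructions for embedding a disjoint union of balls into an ellipsoid. The main obstacle I anticipate is the bookkeeping in the second step: showing that the \emph{only} constraints are the two displayed ones requires understanding the full (infinite) capacity sequence of the concave domain $X_{\Lambda_1}$ well enough to see that $c_k(X_{\Lambda_1}) \le c_k(E(a,b))$ for \emph{all} $k$ reduces to these two inequalities — this is where a careful analysis of the weight decomposition of $\sqrt x + \sqrt y = 1$, and a monotonicity argument showing later capacities impose no new conditions once $\min(a,b)\ge 1/2$ and $\max(a,b)\ge 2/3$, will be needed.
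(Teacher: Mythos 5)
Your setup is the right one and coincides with the paper's: identify $\mathbb{B}_1(\mathbb{C}^2)$ with the concave toric domain under $\sqrt{x}+\sqrt{y}=1$, extract its weight expansion, and reduce the embedding question to a ball packing of $E(a,b)$ via Theorem~\ref{thm:cgcc}. The necessity direction also goes through as you describe: $c_1=w_1=\tfrac12$ forces $\min(a,b)\ge\tfrac12$, and $c_2=w_1+w_2=\tfrac12+\tfrac16=\tfrac23$ combined with $c_2(E(a,b))=\min(2\min(a,b),\max(a,b))$ forces $\max(a,b)\ge\tfrac23$.

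The gap is in the sufficiency direction, which is the actual content of the proposition. You propose to verify $c_k(X_{\Lambda_1})\le c_k(E(1/2,2/3))$ for \emph{all} $k$ and to dispose of the infinitely many inequalities by ``a monotonicity argument showing later capacities impose no new conditions'' --- but you give no such argument, and none is available for free here: the embedding $\mathbb{B}_1(\mathbb{C}^2)\hookrightarrow E(1/2,2/3)$ is \emph{volume filling} (both volumes equal $1/6$), so there is no asymptotic slack and in principle every $c_k$ is a candidate for a tight constraint. The weight sequence is infinite ($\tfrac12,\tfrac16,\tfrac16,\tfrac1{12},\tfrac1{12},\tfrac1{20},\tfrac1{20},\tfrac1{30},\dots$), and a direct comparison of $c_k\bigl(\bigsqcup_i B(w_i)\bigr)$ with the $(k+1)$-st smallest element of $\{m/2+2n/3\}$ for all $k$ is exactly the hard computation you are deferring. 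The paper avoids it entirely by a finite argument: one explicitly places $B(1/2)$ and two copies of $B(1/6)$ inside $E(1/2,2/3)$ and observes that the complement of this packing in the moment polytope is an $SL(2,\mathbb{Z})$-image of the triangle for $B(1/6)$; this reduces the full packing problem to $\bigsqcup_{i\ge 4}B(w_i)\hookrightarrow B(1/6)$, which is then settled by a single Cremona move (Theorem~\ref{thm:cremona}) producing a reduced vector uniformly in $N$. Without either that geometric reduction or a concrete scheme for checking all capacities of a volume-filling configuration, your proposal does not yet establish the existence of the embedding.
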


\begin{remark}
It is not hard to check that ${\mathbb B}_1({\mathbb C}^2)$ and $E(1/2,2/3)$ have the same volume, and thus ${\mathbb B}_1({\mathbb C}^2)\hookrightarrow E(1/2,2/3)$ is a volume filling embedding.
Proposition \ref{prop:lpellip} can be extended to prove that there exists an embedding $${\mathbb B}_p({\mathbb C}^2)\hookrightarrow E \left (2^{1-2/p},\left (1 + 2^{\frac{p}{p - 2}} \right)^{1-2/p}\right)$$ for all $p\in[1,2)$, but for $p\in(1,2)$ this embedding is not volume filling. Since the proof of the existence of this embedding for $p\in(1,2)$ is very similar to the case $p=1$, although much more tedious, we omit it here. 
\end{remark}

\vskip 4pt

\noindent{\bf Structure of the paper:} In Section~\ref{Sec-Integrable-systems} we use the integrability of the Hamiltonian system associated with ${\mathbb X}_p$ to prove Theorem \ref{thm:toric}. 
In Section~\ref{sec:cap} we recall some relevant definitions and results concerning the ECH capacities, and in particular compute the first two capacities of ${\mathbb X}_p$. Finally, in Section~\ref{sec:proof-main-results} we prove Theorems~\ref{thm:main},~\ref{thm:rigid}, and~\ref{thm:symplp}, as well as Proposition~\ref{prop:lpellip}.

\vskip 8pt

\noindent{{\bf Acknowledgments:}} 
Part of this work was done while the first named author was at the Institute for Advanced Study in Princeton. YO is grateful to the IAS for the kind hospitality and support. YO is partially supported by the European Research Council starting grant No.~637386, by the ISF grant No.~667/18, and by the IAS School of Mathematics. The second named author is partially supported by a grant from the Serrapilheira Institute, the FAPERJ grant Jovem Cientista do Nosso Estado and the CNPq grant~305416/2017-0.

\section{Integrable systems and toric domains} \label{Sec-Integrable-systems}
In this section we prove Theorem \ref{thm:toric}, which is the main ingredient in the proof of Theorem~\ref{thm:main}. We start by recalling the classical  Arnold-Liouville theorem and the construction of action-angle coordinates \cite{arnold}. In what follows, if $B$ is an open set in $\R^n$, by abuse of notations we will denote by $\omega_0$ also the standard symplectic form on $B\times \mathbb{T}^n$, i.e., $\omega_0=\sum_i d\rho_i\wedge d\theta_i$, where $(\rho_1,\dots,\rho_n)$ and $(\theta_1,\dots,\theta_n)$ are the coordinates on $\R^n$ and on the torus $\mathbb{T}^n\cong\R^n/\Z^n$, respectively.

\begin{thm}[Arnold-Liouville]\label{thm:al}
Let $(M^{2n},\omega)$ be a symplectic manifold, and let $F=(H^1,\dots,H^n):M\to \R^n$ be a $C^\infty$-function whose components Poisson commute, i.e., $\{H^i,H^j\}=0$ for all $1 \leq i,j \leq n$.
\begin{enumerate}[(a)]
\item If $c\in\R^n$ is a regular value of $F$, i.e., the differentials of $H_1,\ldots,H_n$ are independent on $F^{-1}(c)$, and $F^{-1}(c)$ is compact and connected, then $F^{-1}(c)\cong \mathbb{T}^n$.
\item Let $U\subset M$ be an open set such that $F(U)$ is simply-connected, and does not contain critical values. For $c\in F(U)$, let $\{\gamma_1^c,\dots,\gamma_n^c\}$ be a set of simple closed curves generating $H_1(F^{-1}(c);\Z)$ that depend smoothly on $c$, and suppose that $\omega$ has a primitive $\lambda$ on $U$. Consider the function $\phi : {\mathbb R}^n \to {\mathbb R}^n$ defined by \begin{equation}\label{eq:phi}\phi(c)=\left(\int_{\gamma_1^c}\lambda,\dots,\int_{\gamma_n^c}\lambda\right).\end{equation}
Then $\phi$ is a diffeomorphism with its image $B$, and there exists a symplectomorphism $\Phi:(U,\omega)\to (B\times\mathbb{T}^n,\omega_0)$ such that the following diagram commutes.
\[
\xymatrix{
 \quad U\quad\ar[d]^F \ar[r]^{\Phi} & \;B \times\mathbb{T}^n \ar[d]^{\text{$\pi_1$}}\; \\
\;F(U)\; \ar[r]^{\phi}          & \quad B \quad }
\]
where here $\pi_1$ denotes the projection onto the first factor. 
\end{enumerate}
\end{thm}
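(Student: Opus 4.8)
The plan is to prove the two parts in turn: part (a) is the geometric statement that regular fibers are tori, and part (b) builds canonical action--angle coordinates in which these tori become standard. Throughout I use the two algebraic consequences of $\{H^i,H^j\}=0$: first, $dH^j(X_{H^i})=\{H^i,H^j\}=0$, so each Hamiltonian vector field $X_{H^i}$ is tangent to every common level set of the $H^k$, in particular to $F^{-1}(c)$; and second, $[X_{H^i},X_{H^j}]=X_{\{H^i,H^j\}}=0$, so their flows commute.

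For part (a), since $c$ is a regular value the differentials $dH^1,\dots,dH^n$ are independent along $F^{-1}(c)$, and nondegeneracy of $\omega$ promotes this to linear independence of $X_{H^1},\dots,X_{H^n}$; these therefore frame the tangent bundle of the $n$-dimensional manifold $F^{-1}(c)$. Compactness of the fiber makes the flows complete, so the commuting complete flows assemble into a smooth action of $\R^n$ on $F^{-1}(c)$. Linear independence of the generators makes the action locally free, and connectedness of the fiber makes it transitive. Hence $F^{-1}(c)\cong\R^n/\Lambda$ for the (discrete) isotropy subgroup $\Lambda\subset\R^n$, and compactness forces $\Lambda$ to be a full-rank lattice, giving $F^{-1}(c)\cong\mathbb{T}^n$.

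For part (b) I first note that each regular fiber is Lagrangian, since it is $n$-dimensional and $\omega(X_{H^i},X_{H^j})=\{H^i,H^j\}=0$ on a spanning set of tangent vectors. Thus $\lambda$ restricts to a closed form on each fiber, and $I_j(c):=\int_{\gamma_j^c}\lambda$ depends only on the homology class $[\gamma_j^c]$ (two representatives cobound a $2$-chain inside the Lagrangian fiber, on which $\int\omega=\int d\lambda$ vanishes by Stokes). The crux is the identity $\partial I_j/\partial c_k=\pm(\tau_j)_k$, where $\tau_j(c)\in\Lambda_c\subset\R^n$ is the lattice vector whose flow-orbit is $\gamma_j^c$. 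I would prove it by the first-variation formula for the action: writing $\gamma_j^c$ as an orbit of the $\R^n$-action in the direction $\tau_j(c)$ and lifting $\partial_{c_k}$ to a variation field $V_k$ with $dF(V_k)=e_k$, Cartan's formula and Stokes give $\partial I_j/\partial c_k=\int_{\gamma_j^c}\iota_{V_k}\omega=\int_0^1\omega(V_k,\dot\gamma_j^c)\,ds$, and since $\dot\gamma_j^c=\sum_m(\tau_j)_m X_{H^m}$ while $\omega(V_k,X_{H^m})=-dH^m(V_k)=-\delta_{mk}$, this evaluates to $-(\tau_j)_k$. Because $I_j$ is a function of $c=F$, the chain rule gives $X_{I_j}=\sum_k(\partial I_j/\partial c_k)X_{H^k}=\mp\sum_k(\tau_j)_k X_{H^k}$; as the $\tau_j$ form a lattice basis, the $X_{I_j}$ are independent, so $\{dI_j\}$ are independent and $\phi=(I_1,\dots,I_n)$ is a local diffeomorphism. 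Simple-connectivity of $F(U)$ rules out monodromy of the period lattice, so the cycles $\gamma_j^c$, the actions $I_j$, and the forthcoming torus action are all globally single-valued on $U$; shrinking $U$ if necessary, $\phi$ is a diffeomorphism onto an open set $B$.

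It remains to produce the angle coordinates and the symplectomorphism $\Phi$. The identity $X_{I_j}=\mp\sum_k(\tau_j)_k X_{H^k}$ shows precisely that the Hamiltonian flow of $I_j$ is $1$-periodic with orbit homologous to $\gamma_j^c$; thus the $n$ commuting, complete, $1$-periodic flows of $X_{I_1},\dots,X_{I_n}$ generate a free $\mathbb{T}^n$-action on $U$ whose orbits are the fibers. Choosing a smooth section of $U\to B$ lets me define transverse angle coordinates $\theta_1,\dots,\theta_n$, and I then set $\Phi(x)=(I(x),\theta(x))\in B\times\mathbb{T}^n$. To see $\Phi$ is a symplectomorphism I verify the canonical Poisson relations: $\{I_i,I_j\}=0$ because the $I_j$ are functions of $F$; $\{I_i,\theta_j\}=\delta_{ij}$ by the defining property of the action flows; and $\{\theta_i,\theta_j\}=0$ after one further normalization. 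For the last relation, the Jacobi identity shows each $\{\theta_i,\theta_j\}$ Poisson-commutes with every $I_k$, hence descends to a function on the base, and closedness of $\omega$ makes the associated $1$-form on the simply-connected $B$ exact, so the angles may be corrected by base-dependent shifts $\theta_i\mapsto\theta_i+f_i(I)$ to achieve $\{\theta_i,\theta_j\}=0$. These relations give $\Phi^*\omega_0=\omega$, and since $I=\phi\circ F$ by construction, $\pi_1\circ\Phi=\phi\circ F$, so the stated diagram commutes.

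I expect the main obstacle to be the derivative identity $\partial I_j/\partial c_k=\pm(\tau_j)_k$: it is this single computation that simultaneously forces $\phi$ to be a local diffeomorphism and pins the period of the action flows to $1$, and getting the first-variation formula and the pairing $\omega(V_k,X_{H^m})=-\delta_{mk}$ exactly right is where the argument must be careful. The secondary subtlety is the vanishing $\{\theta_i,\theta_j\}=0$, which does not hold for arbitrary angle coordinates and genuinely uses both the closedness of $\omega$ and the simple-connectivity of the base.
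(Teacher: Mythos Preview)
The paper does not give a proof of this theorem: it is stated as the classical Arnold--Liouville theorem with a reference to \cite{arnold}, followed only by a remark on how the local statement extends once the family of cycles $\gamma_j^c$ is given globally. There is therefore no ``paper's own proof'' to compare against.

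Your outline is the standard textbook argument (essentially Arnold's), and the logic is sound: the $\R^n$-action from the commuting flows gives the torus structure in (a); the first-variation computation $\partial I_j/\partial c_k=-(\tau_j)_k$ identifies the Jacobian of $\phi$ with a lattice basis and simultaneously forces the Hamiltonian flows of the $I_j$ to be $1$-periodic; and the angle normalization via $\theta_i\mapsto\theta_i+f_i(I)$ is the usual way to kill $\{\theta_i,\theta_j\}$. One point to tighten: you conclude only that $\phi$ is a \emph{local} diffeomorphism and then write ``shrinking $U$ if necessary,'' but the statement as formulated asserts that $\phi$ is a diffeomorphism onto its image for the given $U$. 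You should argue global injectivity directly rather than shrink: since $I=\phi\circ F$ and the time-$1$ maps of the $X_{I_j}$-flows generate a free $\mathbb{T}^n$-action on all of $U$, two points $c,c'\in F(U)$ with $\phi(c)=\phi(c')$ would yield fibers carrying identical action values and hence, via the global section and the $\mathbb{T}^n$-action, coincide. Apart from this, your sketch matches the classical proof the paper is citing.
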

\begin{remark}
The original result in \cite{arnold} states that if $c\in\R^n$ is a regular value of $F$ that satisfies the assumptions in (a), then it always has a neighborhood $U$ satisfying the assumptions in (b). So the conclusion of Theorem \ref{thm:al}(b) holds in a neighborhood of $c$. On the other hand, we remark that if one can extend the family of curves $\gamma_1^c, \ldots, \gamma_n^c$, while still maintaining the assumptions in (b), the diffeomorphisms $\phi$ and the symplectomorphism $\Phi$ can be extended too.
\end{remark}

\subsection{The toric picture of the Lagrangian $\ell_p$-sum}

Fix $p \in [1,\infty)$. 
For $(\xx,\yy)\in\R^2 \oplus \R^2$, a natural defining Hamiltonian function for the Lagrangian $p$-sum ${\mathbb X}_p$ \eqref{p-sum-of-discs} is the function $H_p : {\mathbb R}^4 \to \R$ given by  
$$H_p(\xx,\yy):=\Vert \xx\Vert^p+\Vert \yy\Vert^p. $$
Note that $\partial {\mathbb X}_p = H_p^{-1}(1)$, and while for $p\geq 2$ the function $H_p$ is $C^1$, it is not differentiable for $1 < p < 2$. 
Thus, in order to use the Arnold-Liouville theorem stated above, we first approximate $H_p$ by a sequence of smooth functions. 
We write  $H_p$ as $H_p = H_p^2 \circ H_p^1$, where $H_p^1 : \R^4 \to \R^2$ and $H_p^2 : \R^2 \to \R$ are given by
\begin{equation*}
H_p^1 (\xx,\yy) = (\|\xx\|^2,\|\yy\|^2), \quad {\rm and} \quad H_p^2 (s,t) = s^{p/2} + t^{p/2}.
\end{equation*}
Note that  $H_p^1 \in C^{\infty}(\R^4)$, and that $H_p^2$ is smooth away from the coordinate axes.
We approximate $H_p$ by a family of smooth Hamiltonian functions 
\[ H_p^{\epsilon}(\xx,\yy) : = H_p^{2,\epsilon} \circ H_p^1(\xx,\yy)  ,\] where the function $H_p^{2,\epsilon} : {\mathbb R}^2_{\ge 0}\setminus\{0\} \to \R$ is defined as follows. For $\epsilon>0$ small, let
\begin{align*}
f^\epsilon(t)&:=\left\{\begin{array}{ll} \alpha^\epsilon(t),&\text{ for }  0\le t\le \epsilon,\\
(1-t^{p/2})^{2/p},&\text{ for } \epsilon \le t \le 1-\epsilon, \\
\beta^\epsilon(t),&\text{ for } 1-\epsilon< t<1, \end{array}\right.
\end{align*}
where $\alpha^\epsilon$ and $\beta^\epsilon$ are smooth decreasing functions with $\alpha^\epsilon (0)=1$, $\beta^\epsilon(1)=0$, $(\alpha^\epsilon)'(0)<0$, and such that \[     \epsilon<\widetilde{\epsilon}\Rightarrow\alpha^\epsilon\ge \alpha^{\widetilde{\epsilon}}\ \text{and}\, \ \beta^\epsilon\ge \beta^{\widetilde{\epsilon}},\] and such that the function $t\mapsto tf^\epsilon(t)$ is strictly monotone for all $t\in[0,\epsilon]\cup[\epsilon,1-\epsilon]$. In particular, $t\mapsto tf^\epsilon(t)$ has a unique critical point at $t=1/2^{2/p}$, where the function $tf^\epsilon(t)$ attains its global maximum. For $(s,t)\in\R^2_{\ge 0}\setminus\{0\}$, we now define $$H_p^{2,\epsilon}(s,t)=\lambda^{-p/2},$$ where $\lambda$ is the unique number in $\R_{>0}$ such that $\lambda s= f^\epsilon(\lambda t)$. Note that one has  $H_p^{2,\epsilon} \in C^{\infty}(\R^2_{\ge 0} \setminus \{0\})$. Finally, we set $H_p^\epsilon=H_p^{2,\epsilon}\circ H_p^1\in C^{\infty}(\R^4 \setminus \{0\})$.

\medskip

Next, we  denote by $V : \R^4 \rightarrow \R$ the standard ``angular momentum'' given by
$$V(\xx,\yy) :=y_1x_2-y_2x_1.$$
The following propositions shows, roughly speaking, that the dynamical system associated with ${\mathbb X}_p$ is ``integrable'' in the sense of Theorem~\ref{thm:al} above. More precisely,

\begin{prop} \label{prop:properties-of-Hamiltonian-l_p}
Let $F^\epsilon=(H_p^\epsilon,J):\R^4\setminus\{0\}\to\R^2$. Then 
\begin{enumerate}[(a)]
\item $\{H_p^\epsilon,V\}=0$.
\item The image of $F^\epsilon$ consists of all points $(h,v)\in\R_{\ge 0}\times\R\setminus\{(0,0)\}$ such that 
$|v|\le\left(\frac{h}{2}\right)^{2/p},$ with equality occurring  if and only if $(h,v)$ is a critical value.
\item If $|v|<\left(\frac{h}{2}\right)^{2/p}$, then $(F^\epsilon)^{-1}(h,v)$ is compact and connected.
\end{enumerate}
\end{prop}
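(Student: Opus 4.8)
\textbf{Proof plan for Proposition~\ref{prop:properties-of-Hamiltonian-l_p}.}

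The plan is to work directly with the approximating smooth Hamiltonian $H_p^\epsilon = H_p^{2,\epsilon}\circ H_p^1$ and exploit the rotational structure of the construction. For part~(a), the key observation is that $V$ generates the diagonal circle action $R_\theta(\xx,\yy)=(R_\theta\xx,R_\theta\yy)$ on $\R^4$ by simultaneous rotations in the $\xx$- and $\yy$-planes. Since $H_p^1(\xx,\yy)=(\|\xx\|^2,\|\yy\|^2)$ is manifestly invariant under this action, and $H_p^\epsilon$ factors through $H_p^1$, the function $H_p^\epsilon$ is $R_\theta$-invariant; hence its Hamiltonian flow commutes with that of $V$, which gives $\{H_p^\epsilon,V\}=0$. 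I would phrase this cleanly via the identity $\{H_p^\epsilon,V\}=dH_p^\epsilon(X_V)=\frac{d}{d\theta}\big|_{\theta=0}H_p^\epsilon\circ R_\theta=0$.

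For part~(b), I would first describe the image of $(H_p^1,V)$ and then push forward through $H_p^{2,\epsilon}$. Fixing $s=\|\xx\|^2$ and $t=\|\yy\|^2$, the quantity $V=y_1x_2-y_2x_1$ ranges exactly over $[-\sqrt{st},\sqrt{st}]$ as the relative phase between the two planes varies (Cauchy--Schwarz, with the boundary achieved precisely when $(\xx,\yy)$ are parallel or anti-parallel, i.e.\ when $\mathbb{R}^2\ni(x_1,x_2)$ and $(y_1,y_2)$ are linearly dependent — these are exactly the points where $dV$ and the differentials of $\|\xx\|^2,\|\yy\|^2$ fail to be independent). So the image of $(H_p^1,V)$ is $\{(s,t,v):s,t\ge 0,\ v^2\le st\}$. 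Now $H_p^\epsilon = H_p^{2,\epsilon}(s,t)$, so I must track how the constraint $v^2\le st$ transforms under $h=H_p^{2,\epsilon}(s,t)$. On the region where $f^\epsilon$ agrees with $(1-u^{p/2})^{2/p}$, the level set $H_p^{2,\epsilon}=h$ is $h^{-p/2}=\lambda$ with $\lambda s=(1-(\lambda t)^{p/2})^{2/p}$, i.e.\ $s^{p/2}+t^{p/2}=h^{p/2}$; along this curve the product $st$ is maximized when $s=t=(h/2)^{2/p}$, giving $\max(st)=(h/2)^{4/p}$, hence $|v|\le (h/2)^{2/p}$ with equality exactly on the critical locus. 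For the modified pieces near $u\in[0,\epsilon]$ and $u\in[1-\epsilon,1)$, the monotonicity of $u\mapsto uf^\epsilon(u)$ assumed in the construction, together with its unique critical point at $u=2^{-2/p}$ being a global maximum, ensures that $st$ along a level set of $H_p^{2,\epsilon}$ is still maximized at the symmetric point $s=t$ and that the maximum value is unchanged; this is the step I expect to be the main obstacle, since it requires unwinding the implicit definition of $H_p^{2,\epsilon}$ and checking that the reparametrization by $\alpha^\epsilon,\beta^\epsilon$ does not move or change the maximum of $st$ on level sets. Concretely, on the level set $\{H_p^{2,\epsilon}=h\}$ we have $s=\frac{1}{\lambda}f^\epsilon(\lambda t)$ with $\lambda=h^{-p/2}$ fixed, so $st=\frac{t}{\lambda}f^\epsilon(\lambda t)=\frac{1}{\lambda^2}(\lambda t)f^\epsilon(\lambda t)$, which as a function of $\lambda t$ is $\lambda^{-2}$ times $u\mapsto uf^\epsilon(u)$; thus its maximum over the level set occurs at $\lambda t=2^{-2/p}$, i.e.\ $t=h^{p/2}2^{-2/p}$, and equals $\lambda^{-2}\cdot 2^{-2/p}f^\epsilon(2^{-2/p})=h^{p}\cdot 2^{-2/p}(1-2^{-p\cdot(p/2)/\cdots})$ — more transparently $f^\epsilon(2^{-2/p})=(1-2^{-1})^{2/p}=2^{-2/p}$ since $2^{-2/p}\in[\epsilon,1-\epsilon]$, so $\max(st)=h^p\cdot 2^{-4/p}=(h/2)^{4/p}$, giving $|v|\le(h/2)^{2/p}$ exactly as claimed, uniformly in $\epsilon$.

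For part~(c), fix $(h,v)$ with $|v|<(h/2)^{2/p}$. Since $(h,v)$ is then a regular value, $(F^\epsilon)^{-1}(h,v)$ is a submanifold; I would identify it concretely. The level set lies inside $\{H_p^1=(s,t)\}$ for $(s,t)$ on the single curve $\{H_p^{2,\epsilon}=h\}$ (which is a connected arc by the strict monotonicity built into $f^\epsilon$, hence into $H_p^{2,\epsilon}$), intersected with $\{V=v\}$. Over each such $(s,t)$ with $s,t>0$ the fiber of $H_p^1$ is a torus $T^2=\{\|\xx\|^2=s\}\times\{\|\yy\|^2=t\}$ parametrized by the two angles $(\theta_1,\theta_2)$, and $V=\sqrt{st}\sin(\theta_2-\theta_1)$; the condition $V=v$ with $|v|<\sqrt{st}$ cuts out two disjoint circles in each such $T^2$ (the two solutions of $\theta_2-\theta_1=\arcsin(v/\sqrt{st})$ modulo the sign ambiguity). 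The main point is then that as $(s,t)$ ranges over the arc $\{H_p^{2,\epsilon}=h\}$ the endpoints of the arc are where $s=0$ or $t=0$ (since $f^\epsilon(0)=1>0$ and $f^\epsilon$ vanishes at $1$, the curve $\{H_p^{2,\epsilon}=h\}$ meets both axes), and there the two circles degenerate and reconnect, so the total level set is a single circle — I would make this gluing explicit by moving to coordinates $(\rho_1,\rho_2,\theta_1,\theta_2)$ and observing that $(F^\epsilon)^{-1}(h,v)$ is the graph, over the $\theta_1$-circle, of a single continuous loop in the remaining three variables, which in particular shows both compactness (it is a closed bounded subset, using $h>0$ so it stays away from the origin and inside a bounded region) and connectedness. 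Compactness is the easy half: $(F^\epsilon)^{-1}(h,v)$ is closed in $\R^4\setminus\{0\}$ and contained in $\{s^{p/2}+t^{p/2}\le C(h)\}\cap\{s,t\ge c(h)v^2>0\}$ which is a compact subset of $\R^4\setminus\{0\}$. The connectedness gluing at the two ends of the arc is the one spot where a careful picture (rather than a formula) is needed, but it is the same computation as in~\cite{bidisk} for $p=\infty$ and presents no new difficulty.
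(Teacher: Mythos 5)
Your overall strategy is the same as the paper's (reduce everything to the auxiliary function $u\mapsto uf^\epsilon(u)$ on level sets of $H_p^{2,\epsilon}$, then analyze the fibers), and the core computation $\max_{\{H_p^{2,\epsilon}=h\}} st=(h/2)^{4/p}$ is right (modulo an exponent slip: from $h=\lambda^{-p/2}$ one gets $\lambda=h^{-2/p}$, not $h^{-p/2}$; with your $\lambda$ the final equality $h^p2^{-4/p}=(h/2)^{4/p}$ fails for $p\neq 2$). However, part (b) contains a genuine error: for fixed $s=\Vert\xx\Vert^2$, $t=\Vert\yy\Vert^2$ one has $V=\sqrt{st}\,\sin(\theta_1-\theta_2)$, so the extreme values $|V|=\sqrt{st}$ are attained exactly when $\xx$ is \emph{orthogonal} to $\yy$, not when $\xx,\yy$ are parallel (parallel gives $V=0$). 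This inverts your characterization of the boundary configurations and of where the differentials of $\Vert\xx\Vert^2,\Vert\yy\Vert^2,V$ become dependent. Moreover, even after correcting this, identifying the critical values of $F^\epsilon=(H_p^\epsilon,V)$ is not the same as identifying those of $(H_p^1,V)$: since $H_p^\epsilon$ factors through $H_p^1$, the rank of $dF^\epsilon$ can drop at points where $(dH_p^1,dV)$ has full rank. One genuinely needs to compute $\nabla H_p^\epsilon=c\left(\xx,-(f^\epsilon)'(h^{-2/p}\Vert\yy\Vert^2)\yy\right)$ and impose parallelism with $\nabla V=(J_0\yy,-J_0\xx)$, which forces both orthogonality and the norm condition $\Vert\xx\Vert^2=-(f^\epsilon)'(h^{-2/p}\Vert\yy\Vert^2)\Vert\yy\Vert^2$, i.e.\ that $h^{-2/p}\Vert\yy\Vert^2$ is the critical point of $uf^\epsilon(u)$. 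This step is absent from your outline, and the ``if and only if'' in (b) does not follow without it.

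In part (c) the fibration picture is the right idea, but the gluing is misplaced. For $v\neq 0$ the level set never reaches $s=0$ or $t=0$ (since $v^2\le st$); the two circles over each point of the arc merge precisely at the two ends of the sub-arc where $st=v^2$, i.e.\ at $r_{min}$ and $r_{max}$ in the paper's notation (where $p_r=0$), and the resulting regular fiber is a $2$-torus, not ``a single circle'' nor a graph of a loop over the $\theta_1$-circle (that would be $1$-dimensional). The degeneration at the coordinate axes is only relevant in the separate case $v=0$, where the fiber of $H_p^1$ over an axis point is a circle rather than a torus and a different (scaling plus rotation) argument is needed; you have conflated the two cases. The compactness half is fine. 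So: same approach as the paper, but with a parallel/orthogonal inversion in (b), a missing gradient computation for the critical-value equivalence, and an incorrectly located reconnection in (c).
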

\begin{proof}
\begin{enumerate}[(a)]
\item Let $(r,\theta)$ be the polar coordinates for $\mathbf{y}$, and let $(p_r,p_{\theta})$ be the associated coordinates for $\mathbf{x}$. In particular, \[\Vert \mathbf{y}\Vert^2=r^2, \qquad\Vert\mathbf{x}\Vert^2= p_r^2+\frac{p_\theta^2}{r^2},\qquad \yy\times\xx=p_\theta,\]
and  \[\begin{aligned}H_p^\epsilon(\xx,\yy)&=H_p^{2,\epsilon}\left(\Vert x\Vert^2,\Vert y\Vert^2\right)=H_p^{2,\epsilon}\left(p_r^2+\frac{p_\theta^2}{r^2},r^2\right), \quad \text{and} \quad
V(\xx,\yy)&=p_\theta.\end{aligned}\]
Consequently one has $\{H_p^\epsilon,V\}=0$.

\item Suppose that $H_p^{\epsilon}(\xx,\yy)=h$, and $V(\xx,\yy)=v$. It follows from the definition of the function $H_p^\epsilon$ that
\begin{equation}\label{eq:hpe}
h^{-2/p}\Vert \xx\Vert^2=f^\epsilon(h^{-2/p}\Vert \yy\Vert^2).
\end{equation}

By the assumptions on $f^\epsilon$, the maximum of the function $t\mapsto tf^\epsilon(t)$ is $1/2^{4/p}$, which is attained at $t=1/2^{2/p}$. So it follows from \eqref{eq:hpe} that
\[\begin{aligned}v^2&\le \Vert \xx\Vert^2\Vert \yy\Vert^2=h^{2/p}\Vert\yy\Vert^2 f^\epsilon(h^{-2/p}\Vert\yy\Vert^2)
&\le \left(\tfrac{h}{2}\right)^{4/p}.\end{aligned}\]
Moreover, the extremal values of $v$ are attained if and only if $\xx$ is orthogonal to $\yy$, and 
$h^{-2/p}\Vert \yy\Vert^2$ is the point of maximum of $t \mapsto t f^{\epsilon}(t)$.
We next compute the gradients $\nabla H_p^\epsilon$ and $\nabla V$. Since $H_p^{2,\epsilon}$ is homogeneous, it follows that $$\nabla H_p^\epsilon(\xx,\yy)=c \left( \xx,-(f^{\epsilon})'(h^{-2/p}\Vert \yy\Vert^2)\yy\right),$$ for some $c>0$. On the other hand, an easy calculation gives $$\nabla V(\xx,\yy)=(J_0\yy,-J_0\xx), \ \text{where} \ J_0=\begin{bmatrix}0&-1\\1&0\end{bmatrix}.$$ It is clear that $\nabla H_p^\epsilon$ and $\nabla V$ never vanish. Thus, $(\xx,\yy)$ is a critical point of $F^\epsilon$ if and only if $\nabla H_p^\epsilon(\xx,\yy)$ and $\nabla V(\xx,\yy)$ are parallel. This happens if and only if
\begin{equation}\label{eq:hpe2}
\xx=\pm\sqrt{-(f^\epsilon)'(h^{-2/p}\Vert \yy\Vert^2)} J_0 \yy.
\end{equation}
Using \eqref{eq:hpe2} we conclude that \eqref{eq:hpe} is equivalent to requiring that $\xx$ is orthogonal to $\yy$, and  
\begin{equation*}
h^{2/p} f^\epsilon(h^{-2/p}\Vert\yy\Vert^2)=-(f^\epsilon)'(h^{-2/p}\Vert \yy\Vert^2)\Vert\yy\Vert^2,
\end{equation*}
i.e., $h^{-2/p} \Vert\yy\Vert^2$ is a critical point of $t f^{\epsilon}(t)$. By assumption this function has a single critical point. So \eqref{eq:hpe2} holds if and only if $\xx$ is orthogonal to $\yy$ and $h^{-2/p}\Vert \yy\Vert^2$ is the point of maximum of $t f^{\epsilon}(t)$. Therefore $|v|=\left(\frac{h}{2}\right)^{2/p}$ if and only if $(h,v)$ is a critical value as required.

\item First observe that $(F^\epsilon)^{-1}(h,v)$ is a closed set in $\R^2$ which is contained in a bounded set, and so is compact. Next, let $(\xx,\yy)\in(F^\epsilon)^{-1}(h,v)$. First suppose that $v=0$. So $\xx$ and $\yy$ are parallel, and at least one of the two is nonzero. Through scaling, one can connect $(\xx,\yy)$ to $(h^{1/p}\mathbf{z},0)$, where $\mathbf{z}=\xx/\Vert\xx\Vert$ or $\mathbf{z}=\yy/ \Vert\yy\Vert$. Similarly, if $(\widetilde{\xx},\widetilde{\yy})\in(F^\epsilon)^{-1}(h,0)$, it can also be connected to a point of the form $(h^{1/p}\widetilde{\mathbf{z}},0)\in\R^2\times\R^2$ where $\Vert\widetilde{\mathbf{z}}\Vert=1$. Now through a rotation in $\R^2\times\{0\}$ we can connect $(h^{1/p}\mathbf{z},0)$ to $(h^{1/p}\widetilde{\mathbf{z}},0)$ while staying in $(F^\epsilon)^{-1}(h,0)$. Next suppose that $v\neq 0$, and let $(\xx,\yy)\in(F^\epsilon)^{-1}(h,v)$. Using polar coordinates $(p_r,p_\theta,r,\theta)$, it follows from 
\eqref{eq:hpe} that
\begin{equation}\label{eq:pr}p_r^2=h^{2/p}f^{\epsilon}(h^{-2/p}r^2)-\frac{v^2}{r^2}.\end{equation}
Moreover, any quadruple $(p_r,p_\theta,r,\theta)$ represents a point in $(F^{\epsilon})^{-1}(h,v)$ if $v=p_\theta$ and $(r,p_r)$ satisfy \eqref{eq:pr}. Now if $(p_r,p_\theta,r,\theta)$ and $(\widetilde{p}_r,\widetilde{p}_\theta,\widetilde{r},\widetilde{\theta})$ are the polar coordinates of two points in $(F^{\epsilon})^{-1}(h,v)$, we construct a path between them as follows. First, note that $v=p_\theta=\widetilde{p}_\theta$. Let $\{\theta(\tau)\}_{\tau\in[0,1]}\subset\R/2\pi\Z$ be a path connecting $\theta$ to $\widetilde{\theta}$. 
We now define a path $\{r(t)\}_{t\in[0,1]}\in\R_{>0}$ as follows. 
Let $\text{sign}:\R\to \{-1,0,1\}$ denote the sign function, and let $r_{max}$ be the largest solution of the equation 
\begin{equation}\label{eq-v-sqaure}
h^{2/p}r^2 f^{\epsilon}(h^{-2/p}r^2)=v^2.
\end{equation} For $t\in[0,1/2]$, let $r(t)$ be the affine path connecting $r$ to $r_{max}$, and for $t\in[1/2,1]$, let $r(t)$ be the affine path connecting $r_{max}$ to $\widetilde{r}$. By definition $r(t)$ is continuous. Next define
\[p_r(t)=\left\{\begin{aligned}\text{sign}(p_r)\sqrt{h^{2/p}f^{\epsilon}(h^{-2/p}r(t)^2)-\frac{v^2}{r(t)^2}},\quad&\text{if }t\in[0,1/2],\\
\text{sign}(\widetilde{p}_r)\sqrt{h^{2/p}f^{\epsilon}(h^{-2/p}r(t)^2)-\frac{v^2}{r(t)^2}},\quad&\text{if }t\in[1/2,1].\end{aligned}\right.\]
The function $p_r(t)$ is well defined and continuous at $t=1/2$ because $p_r(1/2)=0$ with either definition. Thus, $(p_r(t),\theta,r(t),\theta(t))$ represents a path connecting $(p_r,p_\theta,r,\theta)$ to $(\widetilde{p}_r,\widetilde{p}_\theta,\widetilde{r},\widetilde{\theta})$, which shows that $(F^\epsilon)^{-1}(h,v)$ is connected. 
\end{enumerate}
This completes the proof of Proposition~\ref{prop:properties-of-Hamiltonian-l_p}.
\end{proof}
Equipped with Theorem \ref{thm:al} and Proposition~\ref{prop:properties-of-Hamiltonian-l_p}, we can now prove Theorem \ref{thm:toric}.
\begin{proof}[{\bf Proof of Theorem \ref{thm:toric}}]
Let \[\begin{aligned}U^{h,\epsilon}&=(F^\epsilon)^{-1}\left(\left\{(h,v)\in\R^2\mid |v|\le \left(\frac{h}{2}\right)^{2/p}\right\}\right),\\
U^{h,\epsilon}_{int}&=(F^\epsilon)^{-1}\left(\left\{(h,v)\in\R^2\setminus\{0\}\mid |v|< \left(\frac{h}{2}\right)^{2/p}\right\}\right).\end{aligned}\] It is clear that $F(U^{h,\epsilon}_{int})$ is simply-connected. It follows from Theorem \ref{thm:al} that $U^{h,\epsilon}_{int}$ is symplectomorphic to $\mu^{-1}(\phi(U^{h,\epsilon}_{int}))$, where $\varphi$ are the action coordinates defined in \eqref{eq:phi}. We now define such $\phi$ using appropriate sets of curves and a Liouville form $\lambda$.
Fix $c=(h,v)$, and let
$r_{min},r_{max}$ be the smallest and largest solutions of~\eqref{eq-v-sqaure}. 
Set
\begin{align*}\yy_0&=(r_{max},0).\\
\xx_0&=(0,\text{sign}(v)\cdot r_{min}).\\
\end{align*}
Let $\sigma_0$ be the curve parametrized by $\varphi_t(\xx_0,\yy_0)$ with $t\in[0,t_0]$, where $\{\varphi_t\}$ is the flow of the vector field $X_{H_p^\epsilon}$, and  $t_0$ is the smallest $t>0$ such that the norm of the $\yy$-component of $\varphi_t(\xx_0,\yy_0)$ is $r_{max}$. Let $(\widetilde{\xx}_0,\widetilde{\yy}_0)=\varphi_{t_0}(\xx_0,\yy_0)$. Note that there exists $A\in SO(2)$ such that $A\widetilde{\xx}_0=\xx_0$ and $A\widetilde{\yy}_0=\yy_0$. Let $A_1^t$ and $A_2^t$ be two simple curves in $SO(2)\cong S^1$ connecting the identity with $A$, and rotating counterclockwise and clockwise, respectively. For $i=1,2$, let $\sigma_i$ denote the curve parametrized by $(A_i^t\widetilde{\xx}_0,A_i^t\widetilde{\yy}_0)$, and 
define $\gamma_i^c$ to be the composition of the curve $\sigma_0$ with $\sigma_i$. We observe that by definition $\{\gamma_1^c,\gamma_2^c\}$ generates $H_1\left((F^\epsilon)^{-1}(c)\right)$. 

\medskip

Next, let $\lambda=\sum_{i=1}^2 x_idy_i$. We write $(\xx,\yy)$ in polar coordinates $(p_r,p_\theta,r,\theta)$ as in the proof of Proposition \ref{prop:properties-of-Hamiltonian-l_p}.
If $v=0$, then $\lambda=p_r\,dr$. Hence, in this case
\begin{equation}\label{eq:gamma_i0}
\int_{\gamma_i^c}\lambda=\int_{\gamma_i^c}p_r\,dr=\int_{\sigma_0}p_r\,dr.
\end{equation}
Now suppose that $v\neq 0$. So $\gamma_i^c$ does not go through the origin $\yy=0$. For $i=1,2$,
\begin{equation}
\int_{\gamma_i^c}\lambda=\int_{\gamma_i^c}p_r\,dr+\int_{\gamma_i^c}p_\theta\,d\theta=\int_{\sigma_0}p_r\,dr+v\int_{\gamma_i^c}d\theta.\label{eq:gamma_i}
\end{equation}
For all $v$, we can compute the integral
\begin{equation}
\int_{\sigma_0}p_r\,dr=2\int_{r_{min}}^{r_{max}}\left(h^{2/p}f^\epsilon(h^{-2/p}r^2)-\frac{v^2}{r^2}\right)^{1/2}\,dr.\label{eq:sigma0}
\end{equation}
Let $g_p^\epsilon(h,v)$ be the function defined by the expression in \eqref{eq:sigma0}. We also observe that
\begin{equation}\label{eq:theta}
\begin{aligned}
\int_{\gamma_1^c}d\theta&=\left\{\begin{aligned} \;\;\; 2\pi &,\text{ if }v>0,\\
0 &,\text{ if }v<0.
\end{aligned}\right.\\
\int_{\gamma_2^c}d\theta&=\left\{\begin{aligned} 0 &,\text{ if }v>0,\\
-2\pi &,\text{ if }v<0.
\end{aligned}\right.
\end{aligned}
\end{equation}
It follows from \eqref{eq:gamma_i0}, \eqref{eq:gamma_i}, \eqref{eq:sigma0} and \eqref{eq:theta} that
\begin{equation}\label{eq:phieps}
\phi^\epsilon(h,v)=\left\{\begin{aligned}
(g_p^\epsilon(h,v)+2\pi v,g_p^\epsilon(h,v)),&\text{ if }v\ge 0,\\
(g_p^\epsilon(h,v),g_p^\epsilon(h,v)-2\pi v),&\text{ if }v<0.
\end{aligned}\right.
\end{equation}

It is easy to see that $\phi^\epsilon$ extends to a function defined on $U^{h,\epsilon}$. Finally, to see that the symplectomorphism $U^{h,\epsilon}_{int}\cong\mu^{-1}(\phi^\epsilon(U^{h,\epsilon}_{int}))$ extends to a symplectomorphism $U^{h,\epsilon}\cong\mu^{-1}(\phi^\epsilon(U^{h,\epsilon}))$ we use a similar method to the one in \cite[Lemma 35]{ramossepe}. Namely, we first use a theorem of Eliasson~\cite{eliasson} to show that the symplectomorphism extends to the pre-images of the points $(h,v)\neq(0,0)$ such that $|v|=\left({h}/{2}\right)^{2/p}$, see \cite{duf_mol,eliasson}. Next, to extend the symplectomorphism to $(0,0)$, we use a Theorem of Gromov-McDuff \cite[Theorem 9.4.2]{ms} as it was done in the proof of \cite[Theorem 3]{bidisk}.

\medskip

Next we observe that by definition $f^{\epsilon}(t)\to (1-t^{p/2})^{2/p}$ as $\epsilon \to 0$, and hence $h^{2/p}f^{\epsilon}(h^{-2/p}r^2)\to (1-r^p)^{2/p}$ as $(h,\epsilon)\to (1,0)$. Moreover, $r_{min}$ and $r_{max}$ converge to the two roots of the equation $r^2(1-r^p)^{2/p}=v^2$, namely
\[r_{\pm}=\left(\frac{1}{2}\pm\sqrt{\frac{1}{4}-v^p}\right)^{1/p}.\] It follows from \eqref{eq:g}, \eqref{eq:sigma0} and \eqref{eq:phieps} that $\phi^\epsilon(h,v)$ converges to \eqref{eq:curve} as $(h,\epsilon)\to(1,0)$. Therefore $\bigcup_{\epsilon>0}U^{1-\epsilon,\epsilon}=X_{\Omega_p}$. Note that $X_{\Omega_p}$ is open, unlike $U^{1-\epsilon,\epsilon}$

Now let $\{\epsilon_n\}$ be a sequence such that $\epsilon_n\to 0$ as $n\to \infty$. 
We can then argue as in the proof of \cite[Theorem 3]{bidisk} to conclude that we can find possibly different symplectomorphisms $\Phi_n:U^{1-\epsilon_n,\epsilon_n}\to\phi^{\epsilon_n}(U^{1-\epsilon_n,\epsilon_n})$ for all $n$ such that \[\Phi_{n+1}|_{U^{1-\epsilon_n,\epsilon_n}}=\Phi_n.\] Finally, define $\Phi: {\mathbb X}_p \to \C^2$ by $\Phi(z)= \Phi_n(z)$ if $z\in U^{1-\epsilon_n,\epsilon_n}$. Therefore $\Phi$ is a symplectic embedding whose image is $X_{\Omega_p}$, and the proof of the theorem is complete.
\end{proof}

\subsection{Convexity/Concavity of the toric image} \label{section-toric-domain-of-Lagrangian-p-sum}

In this section we prove Proposition~\ref{prop:cc}. We start by establishing the following properties of the function $g_p$ which appear in the toric description~$(\ref{eq:curve})$.

\begin{lemma}\label{lem:properties-of-gp}
The function $g_p:[0,1/4^{1/p}]\to \R$ defined in~\eqref{eq:g} satisfies the following:
\begin{enumerate}[(a)]
\item $g_p(0)=A(p)/2$, and $g_p(1/4^{1/p})=0$, for every $ p \geq 1$.
\item $g_p$ is strictly decreasing for all $p \geq 1$.
\item $g_p$ is strictly concave if $1<p<2$, and strictly convex if $p>2$.
\item For $p=2$ one has $g_2(v)=\frac{\pi}{2}-\pi v$.
\item $\displaystyle\lim_{v\to 0} g_p'(v) =-\pi$ and 
 $\displaystyle\lim_{v\to (1/4)^{1/p}} g_p'(v)= -\sqrt{\tfrac {2}{p}} \pi$, for every $p \geq 1$.
\item If $p\ge 9/2$, the derivative $g_p'$ is injective and its image contains the point $-2\pi/3$.
\end{enumerate}
\end{lemma}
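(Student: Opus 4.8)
The plan is to establish the six items of Lemma~\ref{lem:properties-of-gp} more or less in the order stated, exploiting the explicit integral formula~\eqref{eq:g}, together with the substitution $u = r^p$ which linearizes the endpoints of integration. Writing $v^p = w$, the limits become $\tfrac12 \pm \sqrt{\tfrac14 - w}$, so the integrand is symmetric about $u = \tfrac12$; this symmetry is what makes the computations at $p=2$ (item (d)) and the boundary derivative limits (item (e)) tractable.

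For item (a), at $v=0$ the lower limit is $0$, the upper limit is $1$, and the integrand collapses to $(1-r^p)^{1/p}$, so $g_p(0) = 2\int_0^1 (1-r^p)^{1/p}\,dr = A(p)/2$ by~\eqref{eq:area-p-norm}; at $v = (1/4)^{1/p}$ the two limits coincide and the integral vanishes. For items (b) and (c), the cleanest route is to differentiate under the integral sign after changing variables so the limits no longer depend on $v$ — e.g. rescale $r$ to a fixed interval — since the boundary terms vanish (the integrand is zero at both endpoints). One finds $g_p'(v)$ and $g_p''(v)$ as integrals of expressions whose sign is governed by the convexity/concavity of the map $r \mapsto r^p$, i.e. by whether $p<2$ or $p>2$; the case $p=2$ is the borderline where $g_2$ is exactly affine. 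I would prove (d) by direct computation: for $p=2$, the substitution $r = \cos\alpha$ turns the integral into an elementary trigonometric one yielding $\tfrac\pi2 - \pi v$, which also gives $g_2' \equiv -\pi$, consistent with (e). For (e), I would compute $\lim_{v\to 0} g_p'(v)$ using the formula for $g_p'$ from (b)–(c) (the limit should reduce to a Beta-function integral evaluating to $-\pi$), and the limit as $v \to (1/4)^{1/p}$ by a local analysis near the double root $r = (1/2)^{1/p}$: expanding $(1-r^p)^{2/p} - v^2/r^2$ to leading order near the collapsing interval gives a semicircular profile, and the ratio of the width-to-height scaling produces the constant $-\sqrt{2/p}\,\pi$.

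The substance of the lemma, and the part I expect to be the main obstacle, is item (f): showing that for $p \ge 9/2$ the derivative $g_p'$ is injective with image containing $-2\pi/3$. Injectivity of $g_p'$ would follow immediately from strict convexity of $g_p$ (item (c)), which holds for all $p > 2$ — so that half is essentially free. The real content is that $-2\pi/3$ lies in the image of $g_p'$: since $g_p'$ is continuous and, by (e), runs monotonically from $-\pi$ at $v=0$ to $-\sqrt{2/p}\,\pi$ at $v = (1/4)^{1/p}$, by the intermediate value theorem it suffices to check that $-\sqrt{2/p}\,\pi \ge -2\pi/3$, i.e. $\sqrt{2/p} \le 2/3$, i.e. $2/p \le 4/9$, i.e. $p \ge 9/2$. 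This is exactly where the threshold $p = 9/2$ enters, and it pins down the role of that value throughout the paper. So the plan for (f) is: invoke (c) for injectivity, invoke (e) for the two endpoint values of $g_p'$, and then observe that the inequality $\sqrt{2/p} \le 2/3$ is equivalent to $p \ge 9/2$; combined with monotonicity of $g_p'$ (which again comes from (c)), the intermediate value theorem finishes it.

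One caveat: items (b), (c) and (e) need mild care about differentiating under the integral sign and about the mild singularity of the integrand's derivative at the endpoints (the integrand behaves like a square root there, so $g_p'$ is still given by a convergent integral after the change of variables). I would handle this by first rewriting $g_p$ with the endpoints frozen via an affine reparametrization $r = r_-(v) + \bigl(r_+(v) - r_-(v)\bigr)s$, $s \in [0,1]$, so that $g_p(v) = 2\bigl(r_+(v)-r_-(v)\bigr)\int_0^1 \sqrt{\,\cdots\,}\,ds$ with a smooth (in $v$, for $v$ in the open interval) integrand, and then differentiate freely; the endpoint limits in (e) are then read off from the behavior of $r_+(v) - r_-(v) \sim 2\sqrt{\tfrac14 - v^p}$ as $v$ approaches the two ends. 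The case $1 \le p \le 2$ of (b) can be folded in by the same formula, or simply by noting $g_1$ and the limiting behavior; and the non-strict/strict distinctions at $p = 1$ versus $1 < p < 2$ in (c) are handled by tracking whether the relevant second-derivative integrand is identically zero.
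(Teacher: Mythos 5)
Your treatment of items (a), (b), (d) and (f) is sound and essentially identical to the paper's: in particular your argument for (f) — injectivity from strict convexity, endpoint values of $g_p'$ from (e), and the equivalence $\sqrt{2/p}\le 2/3 \iff p\ge 9/2$ via the intermediate value theorem — is exactly the paper's argument. The problem is item (c), which is the technical core of the lemma, and your plan for it has a genuine gap. You propose to freeze the endpoints by an affine reparametrization and then ``differentiate freely'' to obtain $g_p''$ as an integral ``whose sign is governed by the convexity/concavity of $r\mapsto r^p$.'' Two things go wrong. First, after the reparametrization the integrand vanishes like $\sqrt{s}$ and $\sqrt{1-s}$ at the endpoints, so its first $v$-derivative has an integrable $s^{-1/2}$ singularity (which is why $g_p'$ exists as a convergent integral), but its second $v$-derivative behaves like $s^{-3/2}$, which is \emph{not} integrable; the naive second differentiation under the integral sign does not produce a convergent formula for $g_p''$. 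Second, even granting some regularized expression, the assertion that its sign is controlled by whether $p\lessgtr 2$ is exactly the claim to be proved, and nothing in your sketch substantiates it.

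The paper circumvents both difficulties by never computing $g_p''$ at all. Starting from the formula
\begin{equation*}
g_p'(v)=-2\int_{r_-}^{r_+}\Bigl((1-r^p)^{2/p}-\tfrac{v^2}{r^2}\Bigr)^{-1/2}\tfrac{v}{r^2}\,dr,
\end{equation*}
it performs the substitution $w=r^p-\tfrac12$ followed by $x=v^{p/2}w/\sqrt{\tfrac14-v^p-w^2}$, arriving at
\begin{equation*}
g_p'(v)=-\frac{2}{p}\int_0^\infty\frac{1}{\frac14+x^2}\sqrt{\frac{u-1}{u^{2/p}-1}}\,dx,\qquad u=\frac{\frac14+x^2}{v^p+x^2},
\end{equation*}
an integral over a \emph{fixed} domain with a bounded integrand. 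Since $u\mapsto (u-1)/(u^{2/p}-1)$ is monotone (decreasing for $p<2$, increasing for $p>2$) and $v\mapsto u$ is decreasing for each fixed $x$, the integrand is pointwise monotone in $v$, which gives strict monotonicity of $g_p'$ and hence strict concavity/convexity of $g_p$ — no second derivative needed. This same fixed-limit formula is also what makes both limits in (e) a routine dominated-convergence computation (your ``semicircular profile'' heuristic for the limit at $v\to(1/4)^{1/p}$ is plausible but is only a sketch as written). Without a device of this kind, your plan for (c) — and with it the injectivity half of (f), which you derive from (c) — does not go through.
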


\begin{proof}[{\bf Proof of Lemma~\ref{lem:properties-of-gp}}]
Note first that the boundaries of the integral in~\eqref{eq:g} guarantee that the integrand is well defined.
Moreover, one can check directly that the function $g_p(v)$ is differentiable in the interval $(0,1/4^{1/p})$, for every $p \geq 1$.

\begin{enumerate}[(a)]
\item This follows immediatly from~\eqref{eq:area-p-norm} and~\eqref{eq:g}.
\item By differentiating~\eqref{eq:g}, one has that for every $p \geq 1$ and $v \in (0,1/4^{1/p})$
\begin{equation}\label{eq:dg}
g_p'(v)=-2\int_{\left(\frac{1}{2}-\sqrt{\frac{1}{4}-v^p}\, \right)^{1/p}}^{\left(\frac{1}{2}+\sqrt{\frac{1}{4}-v^p} \,\right)^{1/p}}\left((1-r^p)^{2/p}-\frac{v^2}{r^2}\right)^{-1/2}\frac{v}{r^2}\,dr.
\end{equation}
Hence $g_p'(v)<0$ for all $v\in(0,1/4^{1/p})$, and therefore $g_p$ is strictly decreasing.

\item We first change variables in~\eqref{eq:dg} by setting
$w=r^p-1/2$, and obtain
\begin{equation}\label{eq:dg2}
\begin{aligned}
g_p'(v)&=-\frac{2}{p}\int_{-\sqrt{\frac{1}{4}-v^p}}^{\sqrt{\frac{1}{4}-v^p}} 
{\frac {v} {\left ({w+\frac{1}{2}} \right) \sqrt{\left({\frac 1 4}-w^2 \right)^{2/p}-v^2}   }}
\,dw \\&
 =-\frac{2}{p}\int_{0}^{\sqrt{\frac{1}{4}-v^p}} 
{\frac {v} {\left ({\frac{1}{4}-w^2} \right) \sqrt{\left({\frac 1 4}-w^2 \right)^{2/p}-v^2}   }}
\,dw.
\end{aligned}
\end{equation}
Next we change variables again by setting
 \begin{equation}\label{eq:sub1}x=\frac{v^{p/2}w}{\sqrt{\frac{1}{4}-v^p-w^2}}.\end{equation}
It follows from a straight-forward calculation that
\begin{align}\frac{dw}{dx}&=\frac{v^p\sqrt{\frac{1}{4}-v^p}}{\left (v^p+x^2 \right)^{3/2}}\label{eq:sub2},\\
\frac{1}{4}-w^2&=\frac{v^p}{v^p+x^2}\left(\frac{1}{4}+x^2\right)\label{eq:sub3}.
\end{align}
From \eqref{eq:dg2}, \eqref{eq:sub1}, \eqref{eq:sub2} and \eqref{eq:sub3} we obtain
\begin{equation}
\begin{aligned}
\label{eq:dg3}
g_p'(v)&=-\frac{2}{p}\int_0^\infty\frac{v\sqrt{\frac{1}{4}-v^p}}{\sqrt{v^p+x^2}\left(\frac{1}{4}+x^2\right)\sqrt{\left(\frac{v^p}{v^p+x^2}\left(\frac{1}{4}+x^2\right)\right)^{2/p}-v^2}}\,dx\\
&=-\frac{2}{p}\int_0^{\infty}\frac{\sqrt{\frac{1}{4}-v^p}}{\sqrt{v^p+x^2}\left(\frac{1}{4}+x^2\right)\sqrt{\left(\frac{\frac{1}{4}+x^2}{v^p+x^2}\right)^{2/p}-1}}\,dx\\
&=-\frac{2}{p}\int_0^\infty\frac{1}{\frac{1}{4}+x^2}\cdot\sqrt{\frac{\frac{\frac{1}{4}+x^2}{v^p+x^2}-1}{\left(\frac{\frac{1}{4}+x^2}{v^p+x^2}\right)^{2/p}-1}}\,dx.
\end{aligned}
\end{equation}
Note that for $u>1$, the function $u\mapsto \frac{u-1}{u^{2/p}-1}$ is strictly decreasing or increasing if $0<p<2$ or $p>2$, respectively. Moreover, for fixed $x$ the function $v\mapsto \frac{\frac{1}{4}+x^2}{v^p+x^2}$ is strictly decreasing. Therefore, $g_p'$ is strictly decreasing or increasing if $0<p<2$ or $p>2$, respectively, which proves the claim.
\item If we let $p=2$ in \eqref{eq:dg3} then,
\begin{equation*}
g_2'(v)=-\int_0^{\infty}\frac{1}{\frac{1}{4}+x^2}\,dx=-\pi.
\end{equation*}
From (a), it follows that $g_2(0)=\pi/2$, and hence $g_2(v)=\frac{\pi}{2}-\pi v$, as required.
\item
It follows from \eqref{eq:dg3} that
\[
\lim_{v\to 0} g_p'(v)=-\frac{1}{p}\int_0^\infty \frac{dx}{x\left(\frac{1}{4}+x^2\right)\sqrt{\left(\frac{1}{4x^2}+1\right)^{2/p}-1}}.\]
Setting $t=(\frac{1}{4x^2}+1)^{1/p}$, we obtain
\[
\lim_{v\to 0} g_p'(v)=-2\int_1^\infty \frac{dt}{t\sqrt{t^2-1}}=-2\arctan \sqrt{t^2-1}\, \Big|_1^{\infty}=-\pi.\]
Next, observe that
\begin{equation}\label{eq:lim}
\begin{aligned}
\lim_{v\to(1/4)^{1/p}} \frac{\left(\frac{1}{4}+x^2\right)^{2/p}-(v^p+x^2)^{2/p}}{\frac{1}{4}-v^p}&=
\frac{2}{p}\left(\frac{1}{4}+x^2\right)^{2/p-1}.
\end{aligned}
\end{equation}
Relations \eqref{eq:dg3} and \eqref{eq:lim} yield
\[
\begin{aligned}
\lim_{v\to(1/4)^{1/ p}} g_p'(v) &=
-\frac{2}{p}\int_0^\infty \Bigl(\frac{1}{4}+x^2\Bigr)^{{\frac 1 p}-{\frac 3 2}}\sqrt{\lim_{v\to(1/4)^{1/p}}\frac{\frac{1}{4}-v^p}{\left(\frac{1}{4}+x^2\right)^{2/p}-(v^p+x^2)^{2/p}} }\,dx\\
&=-\frac{2}{p}\cdot\sqrt{\frac{p}{2}}\int_0^\infty\left(\frac{1}{4}+x^2\right)^{-1}\,dx
=-\sqrt{\frac{2}{p}}\pi.\end{aligned}\]
\item We proved in (c) that $g_p'$ is strictly increasing if $p>2$. In particular, $g_p'$ is injective if $p>2$. It follows from (e) that the image of $g_p'$ is the interval $\left[-\sqrt{\frac{2}{p}}\pi,-\pi\right]$, which contains the point $-2\pi/3$ if $p\ge 9/2$.
\end{enumerate}
\end{proof}


\begin{proof}[{\bf Proof of Proposition \ref{prop:cc}}]
Let $p\ge 1$, and let $(x(v),y(v))$ be the parametrization of the curve given by \eqref{eq:curve}. It follows from Proposition \ref{lem:properties-of-gp} that $g_p'(v)\ge -\sqrt{2} \pi$ for $v\ge 0$. Thus, $x'(v)>0$ for all $v$, and hence this curve is the graph of a decreasing function $\varphi$. Moreover, by definition one has
\[y''(v)x'(v)-x''(v)y'(v)=2\pi g_p''(|v|).\]
Thus, it follows from Lemma \ref{lem:properties-of-gp} (c),(d) that $X_{\Omega_p}$ is a concave toric domain if $1 \le p\le 2$, and a convex toric domain if $p\ge 2$.
\end{proof}

\section{The ECH capacities of toric domains} \label{sec:cap}

In \cite{qech}, Hutchings defined a sequence of symplectic capacities for 4-dimensional symplectic manifolds using embedded contact homology (ECH). In particular, for a Liouville domain $X\subset \R^4$, he defined a sequence of numbers $\left(c_k(X)\right)_{k\in\mathbb{N}}\subset\R\cup\{\infty\}$ satisfying:
\begin{itemize}
\item $0=c_0(X)\le c_1(X)\le c_2(X)\le \dots\le\ \infty$,
\item $c_k(a\cdot X)=a^2\cdot c_k(X)$, for all $k\in\mathbb{N}$ and $a>0$,
\item $X_1\hookrightarrow X_2\Rightarrow c_k(X_1)\le c_k(X_2)$, for all $k\in\mathbb{N}$. 
\item $\left(c_k(B(a))\right)_{k\in\mathbb{N}}=(0,a,a,2a,2a,2a,3a,3a,3a,3a,\dots)$.
\end{itemize}
\medskip
The ECH capacities turn out to give sharp obstructions for many symplectic embedding problems (see e.g.,~\cite{McD}). Moreover, for convex and concave toric domains, they can be computed combinatorially as explained in \cite{qech,ccfhr,hutbey}.
We will now review some relevant properties of the ECH capacities, and in particular describe the first two capacities of symmetric concave/convex toric domains.

\medskip

Let $X_\Omega$ be a concave toric domain. The weight expansion $w(\Omega)$, associated with $X_{\Omega}$, is a multiset which was defined inductively in \cite{ccfhr} as follows. Let $T(c)\subset\R^2$ be the triangle whose vertices are $(0,0)$, $(c,0)$ and $(0,c)$. For a set $\Omega \subset\R_{\ge 0}^2$ which is bounded by the coordinate axes and the graph of a decreasing concave function $\phi:[0,a]\to\R_{\ge 0}$ with $\phi(a)=0$, we define
\begin{equation} \label{eq-def-of-tau} \tau(\Omega) := \sup \{ c \, | \, T(c) \subseteq \Omega \}. \end{equation} 
We write $\Omega\setminus T(\tau(\Omega))=\widetilde{\Omega}_1\sqcup\widetilde{\Omega}_2$, where $\widetilde \Omega_1$ does not intersect the $y$-axis and $\widetilde \Omega_2$ does not intersect the $x$-axis. Note that these sets can be empty, otherwise their closures have a unique obtuse corner. Consider the closures of $\widetilde{\Omega}_1$ and $\widetilde{\Omega}_2$ translated so that each obtuse corner is mapped to the origin. Let $\Omega_1$ and $\Omega_2$ be the images of these translations under multiplication by the matrices 
\[
\begin{pmatrix}
    1  &  1      \\
    0  &  1      
\end{pmatrix}
\ {\rm and} \  
\begin{pmatrix}
    1  &  0      \\
    1  &  1      
\end{pmatrix}, 
\]
respectively. Note that $X_{\Omega_1}$ and $X_{\Omega_2}$ are again concave toric domains if non-empty. The weight expansion is defined inductively by 
\[w(\Omega)=\{\tau(\Omega)\}\sqcup w(\Omega_1)\sqcup w(\Omega_2),\] where this union is considered with repetition, and $w(\Omega_j)=\emptyset$ if $\Omega_j = \emptyset$. 
Another way of seeing it is the following. The process above defines a directed tree of domains starting from $\Omega$. We denote the elements of this tree by $\Omega_{i_1\dots i_p}$, where the indices $i_1,\dots,i_p\in\{1,2\}$, and the domains derived from $\Omega_{i_1\dots i_p}$ are $\Omega_{i_1\dots i_p 1}$ and $\Omega_{i_1\dots i_p 2}$. It is possible that some $\Omega_{i_1\dots i_p}$ are empty. Then, \[w(\Omega)=\{\tau(\Omega_{i_1\dots i_p}) \mid p\in\mathbb{N}; \ i_1,\dots,i_p\in\{1,2\}\}.\]
With a slight abuse of notation, we now write $w(\Omega)=(w_1,w_2,w_3,\dots)$, where
\begin{equation}\label{eq:dec}
w_1\ge w_2\ge w_3\ge \cdots\end{equation} Note that $w_1=\tau(\Omega)$ and that $w_2=\max(\tau(\Omega_1),\tau(\Omega_2))$.
It was shown in \cite{ccfhr} that for a concave toric domain $X_{\Omega}$ one has
\begin{equation}\label{eq:ck}c_k(X_\Omega)=c_k\left(\bigsqcup_{j=1}^{\infty} B(w_j)\right)=\max_{i_1+\dots+i_k=k}\sum_{j=1}^{k} c_{i_j}(B(w_j)).\end{equation}

Next, we say that a toric domain $X_\Omega$ is \textit{symmetric} if it is invariant under the reflection about the line $y=x$. The following lemma is a computation of the first two ECH capacities that will be relevant for all of the domains in this paper.

\begin{lemma}\label{lemma:c1c2}
Let $X_\Omega$ be a symmetric toric domain where $\Omega\subset \R^2_{\ge 0}$ is bounded by the coordinate axes and a $C^1$-curve $\gamma$ parametrized by $(x(v),y(v))$, which connects the points $(a,0)$ and $(0,a)$. Let $b\in(0,a)$ such that $(b,b)=(x(v),y(v))$ for some $v$.
\begin{enumerate}[(a)]
\item If $X_\Omega$ is a convex toric domain, then 
\begin{align*}
c_1(X_\Omega)&=a,\\
c_2(X_\Omega)&=2b.
\end{align*}
\item If $X_\Omega$ is a concave toric domain such that $-2\le y'(v)/x'(v)\le -1/2$, then
\begin{align*}
c_1(X_\Omega)&=2b,\\
c_2(X_\Omega)&=a.
\end{align*}
\item If $X_\Omega$ is a concave toric domain such that $y'(v)/x'(v)>-1/2$ for some $v$, and let $v_0$ such that $y'(v_0)/x'(v_0)=-1/2$. Then
\begin{align*}
c_1(X_\Omega)&=2b,\\
c_2(X_\Omega)&=2y(v_0)+x(v_0).
\end{align*}
\end{enumerate}
\end{lemma}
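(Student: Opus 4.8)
The plan is to compute $c_1$ and $c_2$ directly from the combinatorial descriptions of ECH capacities for convex and concave toric domains. For part (a), I would use the known formula $c_1(X_\Omega)=\min\{\lambda : \Omega\subseteq \Sigma(\lambda)\}$ where $\Sigma(\lambda)$ is the triangle with legs $\lambda$ (equivalently, $c_1$ of a convex toric domain is computed by the first lattice point / moment-map length data), and observe that by symmetry of $\Omega$ about $y=x$ the smallest such triangle is pinned exactly at the point where $\gamma$ meets the diagonal, forcing $c_1(X_\Omega)=a$ via the intercept of the supporting line; more carefully, for a convex toric domain the sequence of ECH capacities equals that of the associated ``ball packing'' dual and one has $c_1(X_\Omega)=\max\{x+y : (x,y)\in\Omega\}$-type expressions — I would instead invoke the convex-toric-domain formula from \cite{hutbey,cgcc} that gives $c_k$ as a min over lines of slope determined by $k$, and for $k=1,2$ read off $c_1=a$ (the $x$-intercept, by convexity and symmetry) and $c_2=2b$ (the line through $(b,b)$ with slope $-1$, which by convexity and symmetry is the binding constraint for two balls). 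The key observation in both parts is that symmetry about $y=x$ reduces the optimization to the single diagonal point $(b,b)$.

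For part (b), I would pass through the weight expansion \eqref{eq:ck}. By symmetry, the largest inscribed triangle $T(\tau(\Omega))$ touches $\gamma$ at $(b,b)$, so $\tau(\Omega)=2b$ and hence $w_1=2b$, giving $c_1(X_\Omega)=w_1=2b$ by \eqref{eq:ck}. For $c_2$, formula \eqref{eq:ck} gives $c_2(X_\Omega)=\max\{2w_1, w_1+w_2\}$; under the hypothesis $-2\le y'/x'\le -1/2$ the two remaining pieces $\widetilde\Omega_1,\widetilde\Omega_2$ after removing $T(2b)$ are ``flat enough'' that each, after the unimodular transformation, is contained in a triangle small enough that $w_1+w_2\le 2w_1$ fails to be the max — more precisely I would show $2b = a$ is impossible in general, so I need to check directly that $c_2 = a$. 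The cleanest route: use the dual formulation that for a concave toric domain $c_2(X_\Omega)$ equals the smallest $\lambda$ such that $\Omega$ fits under the ``staircase'' corresponding to $(0,1,1,\ldots)$ weighted balls, which for the second capacity means $\Omega$ is contained in the triangle $T(\lambda)$ shifted — actually $c_2(X_\Omega) = \tau(\Omega) + \max(\tau(\Omega_1),\tau(\Omega_2))$ only if that exceeds $2\tau(\Omega)$; the slope condition $y'/x'\ge -2$ and $\le -1/2$ guarantees $\Omega\subseteq T(a)$ fails but the relevant inequality forces $c_2=a$. I would verify this by showing that under these slope bounds $\Omega$ is sandwiched between $T(2b)$ and $T(a)$ with $\tau(\Omega_i)\le a - 2b$, whence $\max(2\cdot 2b,\, 2b+(a-2b)) = \max(4b,a)$; the slope condition $-1/2 \le y'/x'$ at the point $(b,b)$ (and convexity of the complementary pieces) forces $a\ge 4b$, giving $c_2=a$.

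For part (c), the hypothesis $y'(v)/x'(v)>-1/2$ somewhere means $\Omega$ is no longer contained in $T(a)$ in the ``balanced'' way, so after removing $T(2b)$ the leftover pieces $\Omega_1,\Omega_2$ are large enough that $c_2$ is governed by $w_1+w_2$ rather than by $2w_1$ or by the $x$-intercept. I would locate $v_0$ with $y'(v_0)/x'(v_0)=-1/2$, which by concavity of $\gamma$ is unique up to the symmetric pair, and show that the supporting line of $\gamma$ at $(x(v_0),y(v_0))$ has slope $-1/2$, so the triangle $T'$ with a side along this line and legs in ratio $2{:}1$ is the minimal ``$(1,2)$-triangle'' containing $\Omega$; its size parameter is exactly $2y(v_0)+x(v_0)$, which by the ECH formula for concave toric domains (the $k=2$ case) equals $c_2$. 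Concretely, $c_2(X_\Omega)$ is the minimum over slopes $-1$ and $-1/2$ (the two ways to realize weight data summing appropriately) of the corresponding intercept, and the hypothesis guarantees the $-1/2$ constraint is active.

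\textbf{Main obstacle.} The routine part is the convex case (a), where the standard formulas apply almost verbatim. The genuine difficulty is being careful in the concave cases (b) and (c) about which combinatorial expression in \eqref{eq:ck} achieves the maximum: one must translate the analytic slope hypotheses on $\gamma$ at the diagonal point and at $v_0$ into precise statements about the weight multiset $(w_1,w_2,\ldots)$, and in particular verify that no ``third'' configuration (involving $w_3$ or a different unimodular chamber) beats the claimed value of $c_2$. I expect this bookkeeping — matching the geometry of the staircase/triangle packings to the slope conditions — to be the crux, and I would handle it by reducing, via the $y=x$ symmetry, to a one-parameter comparison of line intercepts with slopes $-1$ and $-1/2$.
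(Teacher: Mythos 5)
Part (a) of your proposal is fine and is essentially the paper's argument, which quotes \cite[Proposition 5.6]{hutbey} and uses the $y=x$ symmetry to identify the relevant intercepts. The genuine problem is in the concave cases. You assert that \eqref{eq:ck} gives $c_2(X_\Omega)=\max\{2w_1,\,w_1+w_2\}$. This rests on the incorrect value $c_2(B(w_1))=2w_1$: the ball's capacity sequence is $(0,a,a,2a,2a,2a,\dots)$, so $c_2(B(w_1))=w_1$, and the maximum in \eqref{eq:ck} for $k=2$ is simply $w_1+w_2$ (the configuration putting both units on the first ball contributes only $w_1$). This error is not cosmetic. In part (b) it forces you to try to prove $a\ge 4b$, which is false in exactly the situations the lemma is applied to: for $\Omega_p$ with $p\in[2,9/2]$ one has $a=A(p)$ and $2b=2\pi(1/4)^{1/p}$, and already at $p=2$ this reads $a=\pi<2\pi=4b$. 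In part (c) the inconsistency is even starker: since the weights are decreasing one always has $w_1+w_2\le 2w_1$, so your formula would yield $c_2=2w_1=4b$ in every concave case, contradicting the answer $2y(v_0)+x(v_0)$ you are trying to prove.

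Once the formula is corrected to $c_2(X_\Omega)=w_1+w_2$, the remainder of your outline collapses onto the paper's proof: by symmetry $w_1=2b$ is the intercept of the slope $-1$ tangent line, and $w_1+w_2$ equals the $x$-intercept $x_{1/2}$ of the extremal line of slope $-1/2$ whose first-quadrant segment is still contained in $\Omega$. Under the slope hypothesis of (b) that line passes through $(a,0)$, so $c_2=a$; note that here you need the equality $w_2=a-2b$, not merely the inequality $\tau(\Omega_i)\le a-2b$ that you state. In case (c) the extremal slope $-1/2$ line is tangent at $(x(v_0),y(v_0))$, with intercept $2y(v_0)+x(v_0)$. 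So the strategy (weight expansion plus a comparison of slope $-1$ and slope $-1/2$ supporting lines) is the right one, but the combinatorial input must be fixed before the slope analysis can deliver the stated values.
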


\begin{proof}[{\bf Proof of Lemma~\ref{lemma:c1c2}}]

(a) If $X_\Omega$ is a convex toric domain bounded by the coordinate axes and a $C^1$ curve $\gamma$ connecting $(x_0,0)$ to $(0,y_0)$, then it follows from \cite[Proposition 5.6]{hutbey} that
\[c_1(X_\Omega)=\min(x_0,y_0),\quad c_2(X_\Omega)=\min(2x_0,2y_0,x_1+y_1)=x_1+y_1,\]
where $(x_1,y_1)$ is a point on the curve at which the slope of the tangent line is $-1$. Since we  assume that $\gamma$ is symmetric about the line $y=x$, we conclude that
\[c_1(X_\Omega)=a,\quad c_2(X_\Omega)=2b.\]

\noindent We turn now to prove (b) and (c). Suppose that $X_\Omega$ is a concave toric domain and let $(w_1,w_2,\dots)$ be its weight expansion. 
It follows from the \eqref{eq:dec}, \eqref{eq:ck} and the computation of $c_k(B(c))$ that
\begin{equation}\label{eq:c1c2}
c_1(X_\Omega)=w_1,\quad c_2(X_\Omega)=w_1+w_2.
\end{equation}

Since $\gamma$ is a $C^1$ curve, $w_1$ is the unique real number such that the line $x+y=w_1$ is tangent to $\gamma$. Since this curve is symmetric about the line $y=x$, it follows that $c_1(X_\Omega)=w_1=2b$. Let $x_{1/2}$ be the $x$-intercept of the line of slope $-1/2$ whose intersection with the first quadrant is as large as possible but still contained in $\Omega$. Simple linear algebra shows that $w_2=x_{1/2}-w_1$. So if $-2\le y'(v)/x'(v)\le -1/2$ for all $v$, then 
$x_{1/2}=a$, and from \eqref{eq:c1c2} we obtain
\[c_2(X_\Omega)=w_1+(a-w_1)=a.\]
On the other hand, suppose that $y'(v)/x'(v)>-1/2$ for some $v$ (which implies that $x_{1/2}<a$), and let $v_0$ 
such that $y'(v_0)/x'(v_0)=-1/2$. In this case the upper right side of the triangle $T'(w_2)+(w_1,0)$ is tangent to $\gamma$. So the point $(w_2+w_1,0)$ belongs to the line of slope $-1/2$ going through $(x(v_0),y(v_0))$. Therefore, in this case
\begin{equation}\label{eq:w1w2}
c_2(X_\Omega)=w_1+w_2=2y(v_0)+x(v_0).\end{equation}
This completes the proof of Lemma~\ref{lemma:c1c2}.
\end{proof}

The first two ECH capacities of the toric domain $X_{\Omega_p}$ introduced in Theorem~\ref{thm:toric}  are now a straight forward consequence of Lemma \ref{lem:properties-of-gp} and Lemma \ref{lemma:c1c2}. 

\begin{prop}\label{prop:cap}
The first two ECH capacities of $X_{\Omega_p}$ are given as follows.
\quad
\begin{enumerate}[(a)]
\item If $p \in[1,2]$, then 
\begin{equation} \label{eq-first-cap-of-lagrangian-l-p}
\begin{aligned}
c_1(X_{\Omega_p})&=2\pi(1/4)^{1/p},\\c_2(X_{\Omega_p})&= A(p).
\end{aligned}
\end{equation}
\item If $p\ge 2$, then 
\begin{equation} \label{eq-second-cap-of-lagrangian-l-p}
\begin{aligned}
c_1(X_{\Omega_p})&=A(p)\\
c_2(X_{\Omega_p})&=\left\{
\begin{array}{ll}
2\pi(1/4)^{1/p},&\text{if }p\le 9/2,\\
2\pi \left(g_p'\right)^{-1}(-2\pi/3)+3g_p\left(\left(g_p'\right)^{-1}(-2\pi/3)\right),&\text{if }p>9/2.
\end{array}\right.
\end{aligned}
\end{equation}
\end{enumerate}
\end{prop}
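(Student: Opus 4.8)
The plan is to apply Lemma~\ref{lemma:c1c2} to the specific toric domain $X_{\Omega_p}$, after importing the relevant structural facts about $g_p$ from Lemma~\ref{lem:properties-of-gp}. Recall from Theorem~\ref{thm:toric} that $\partial\Omega_p$ is parametrized symmetrically about $y=x$ by the curve \eqref{eq:curve}, which meets the axes at $(A(p),0)$ and $(0,A(p))$ since $g_p(0)=A(p)/2$ and $g_p(1/4^{1/p})=0$ by Lemma~\ref{lem:properties-of-gp}(a); moreover the symmetry point $(b,b)$ on the curve is attained at $v=0$, where $b=\pi\cdot 0+g_p(0)=A(p)/2$, wait—more carefully, at $v=0$ the parametrization gives $(2\pi\cdot 0 + g_p(0), g_p(0)) = (A(p)/2, A(p)/2)$, so $b=A(p)/2$. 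Hence in the notation of Lemma~\ref{lemma:c1c2}, $a=A(p)$ and $2b=A(p)$... this needs care since for the \emph{convex} case $c_1=a=A(p)$ and $c_2=2b=A(p)$, which is clearly wrong for $p$ near $1$. So the symmetry point is \emph{not} at $v=0$: one must instead identify the value of $v$ where $x(v)=y(v)$, i.e.\ where $2\pi v + g_p(v) = g_p(v)$, forcing $v=0$ only if we use the first branch — but the curve also has a second branch for $v\in[-(1/4)^{1/p},0]$, and by the reflection symmetry (Remark~\ref{rmk:refl}) the diagonal point is where the two branches meet, namely $v=0$ on the \emph{first} branch giving $(A(p)/2,A(p)/2)$... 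Let me reconsider: actually the correct reading is that the curve runs from $(A(p),0)$ (at $v=(1/4)^{1/p}$ on branch one, since $g_p=0$ there, giving $(2\pi(1/4)^{1/p},0)$ — hmm, that is $2\pi(1/4)^{1/p}$, not $A(p)$).

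So the curve's axis intercepts are $(2\pi(1/4)^{1/p},0)$ and $(0,2\pi(1/4)^{1/p})$, giving $a=2\pi(1/4)^{1/p}$, while the diagonal point is $(A(p)/2,A(p)/2)$ giving $2b=A(p)$. This is what makes the two cases come out correctly. So the first step is: \textbf{(1)} compute $a=2\pi(1/4)^{1/p}$ and $b=A(p)/2$ from \eqref{eq:curve} and Lemma~\ref{lem:properties-of-gp}(a). Then \textbf{(2)} invoke Proposition~\ref{prop:cc}: $X_{\Omega_p}$ is convex for $p\in[1,2]$ and concave for $p\in[2,\infty]$. For $p\in[1,2]$, apply Lemma~\ref{lemma:c1c2}(a) directly to get $c_1=a=2\pi(1/4)^{1/p}$ and $c_2=2b=A(p)$, proving part (a).

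For $p\ge 2$ (part (b)), $X_{\Omega_p}$ is concave, so $c_1=2b=A(p)$ unconditionally by the $c_1$ formula in both (b) and (c) of Lemma~\ref{lemma:c1c2}. For $c_2$ we must decide which of cases (b) or (c) of Lemma~\ref{lemma:c1c2} applies, i.e.\ whether the slope $y'(v)/x'(v)$ of the curve \eqref{eq:curve} ever exceeds $-1/2$. Step \textbf{(3)}: compute this slope along branch one: with $x(v)=2\pi v+g_p(v)$, $y(v)=g_p(v)$, we get $y'/x' = g_p'(v)/(2\pi+g_p'(v))$. This is an increasing function of $g_p'(v)$ (one checks the derivative of $s\mapsto s/(2\pi+s)$ is $2\pi/(2\pi+s)^2>0$ on the relevant range where $2\pi+s>0$, which holds since $g_p'\ge -\sqrt2\,\pi>-2\pi$ by Lemma~\ref{lem:properties-of-gp}(e)). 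By Lemma~\ref{lem:properties-of-gp}(e), $g_p'$ ranges over $[-\sqrt{2/p}\,\pi,-\pi]$ for $p\ge2$ (using monotonicity from (c)), so $y'/x'$ attains its maximum at the endpoint where $g_p'=-\sqrt{2/p}\,\pi$, giving max slope $-\sqrt{2/p}\,\pi/(2\pi-\sqrt{2/p}\,\pi)=-1/(\sqrt{2p}-1)$. This exceeds $-1/2$ precisely when $\sqrt{2p}-1<2$, i.e.\ $p<9/2$; and the minimum slope is $-\pi/(2\pi-\pi)=-1$, safely $\ge -2$. Step \textbf{(4)}: Conclude — for $2\le p\le 9/2$ we are in Lemma~\ref{lemma:c1c2}(b), so $c_2=a=2\pi(1/4)^{1/p}$; for $p>9/2$ we are in case (c), and the critical $v_0$ satisfies $y'(v_0)/x'(v_0)=-1/2$, i.e.\ $g_p'(v_0)/(2\pi+g_p'(v_0))=-1/2$, which solves to $g_p'(v_0)=-2\pi/3$, so $v_0=(g_p')^{-1}(-2\pi/3)$ (well-defined by Lemma~\ref{lem:properties-of-gp}(f)); then $c_2=2y(v_0)+x(v_0)=2g_p(v_0)+(2\pi v_0+g_p(v_0))=2\pi v_0+3g_p(v_0)$, matching the stated formula.

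I expect the only real obstacle to be bookkeeping: correctly reading off $a$ and $b$ from the two-branch parametrization \eqref{eq:curve} (it is easy to conflate the axis intercept $2\pi(1/4)^{1/p}$ with $A(p)$), and correctly tracking which branch of the symmetric curve realizes the tangency conditions in Lemma~\ref{lemma:c1c2} — by the reflection symmetry it suffices to work on branch one ($v\ge 0$), where the slope is monotone in $g_p'(v)$ and hence monotone in $v$ by Lemma~\ref{lem:properties-of-gp}(c). Everything else is a routine substitution. I would also remark that continuity at $p=9/2$ is automatic since $(g_p')^{-1}(-2\pi/3)\to (1/4)^{1/p}$ as $p\downarrow 9/2$ by Lemma~\ref{lem:properties-of-gp}(e), at which point $3g_p((1/4)^{1/p})=0$ and $2\pi(1/4)^{1/p}$ is recovered.
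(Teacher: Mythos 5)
Your proposal is correct in substance and follows exactly the route the paper intends: the paper's own ``proof'' of this proposition is the single sentence that it is a straightforward consequence of Lemma~\ref{lem:properties-of-gp} and Lemma~\ref{lemma:c1c2}, and your steps (1)--(4) are precisely the bookkeeping that sentence suppresses (axis intercept $a=2\pi(1/4)^{1/p}$, diagonal point $b=A(p)/2$, convexity/concavity from Proposition~\ref{prop:cc}, slope $y'/x'=g_p'/(2\pi+g_p')$ and the tangency value $g_p'(v_0)=-2\pi/3$). One slip to fix: in step (3) you assert that the maximal slope $-1/(\sqrt{2p}-1)$ exceeds $-1/2$ precisely when $\sqrt{2p}-1<2$, i.e.\ $p<9/2$; the inequality is reversed, since $-1/(\sqrt{2p}-1)>-1/2$ is equivalent to $\sqrt{2p}-1>2$, i.e.\ $p>9/2$ (check $p=2$: maximal slope $-1<-1/2$). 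As written, step (3) contradicts the case split you then (correctly) make in step (4), so the sentence should be corrected even though the final formulas you derive are the right ones.
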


\subsection{Ball packings and symplectic embeddings}

In this section we provide a criterion for the embedding of a symmetric concave domain into a ball in ${\mathbb R}^4$ (see Proposition~\ref{prop:flex} below). We start with recalling the following result proved by Cristofaro-Gardiner in~\cite{cgcc}.

\begin{thm}\label{thm:cgcc}
Let $X_{\Omega}$ be a concave toric domain with weight expansion $(w_1,w_2,\dots)$, and let $X_{\Omega'}$ be a convex toric domain. Then $X_{\Omega}\hookrightarrow X_{\Omega'}$ if and only if
\[\bigsqcup_{i=1}^N B(w_i)\hookrightarrow X_{\Omega'},\quad\forall N.\]
\end{thm}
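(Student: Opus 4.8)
\textbf{Proof proposal for Theorem~\ref{thm:cgcc}.}

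The plan is to deduce this from the general monotonicity and scaling properties of ECH capacities together with the explicit combinatorial formula~\eqref{eq:ck} for concave toric domains. The forward implication is immediate: if $X_\Omega \hookrightarrow X_{\Omega'}$, then by~\eqref{eq:ck} the domain $X_\Omega$ admits a symplectic embedding of $\bigsqcup_{i=1}^\infty B(w_i)$ (indeed, the weight expansion encodes exactly such a packing, as shown in~\cite{ccfhr}), so in particular every finite sub-collection $\bigsqcup_{i=1}^N B(w_i)$ embeds into $X_\Omega$ and hence, by composition, into $X_{\Omega'}$. The content is the reverse direction.

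For the reverse implication, suppose $\bigsqcup_{i=1}^N B(w_i) \hookrightarrow X_{\Omega'}$ for every $N$. The first step is to note that since $X_{\Omega'}$ is a convex toric domain, the ECH capacities give \emph{sharp} obstructions for ball packings into it; this is the key input and it is precisely the result one cites from~\cite{cgcc} (or, for the relevant closed-form version, it follows from the identification of ECH capacities of convex toric domains with the combinatorial capacities and McDuff's resolution of the ellipsoid embedding problem together with its extension to ball packings). Concretely, a disjoint union of balls embeds into $X_{\Omega'}$ if and only if all ECH capacities of the union are dominated by those of $X_{\Omega'}$. The second step is a limiting argument: one wants to pass from ``$\bigsqcup_{i=1}^N B(w_i) \hookrightarrow X_{\Omega'}$ for all $N$'' to ``$X_\Omega \hookrightarrow X_{\Omega'}$''. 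Here I would use that the interior of $X_\Omega$ is exhausted, up to sets of measure zero and up to symplectomorphism, by the finite ball packings $\bigsqcup_{i=1}^N B(w_i)$: given any compact subset $K \subset \interior(X_\Omega)$ and any $\delta>0$, there is $N$ and a symplectic embedding of $K$ into $\bigsqcup_{i=1}^N B((1+\delta)w_i)$ after a slight rescaling, because the weight expansion packing fills $X_\Omega$ by volume. Combining this with a rescaled embedding $\bigsqcup_{i=1}^N B((1+\delta) w_i) \hookrightarrow (1+\delta) X_{\Omega'}$ (obtained by applying the finite-$N$ hypothesis to a slightly larger target, or by scaling) and then letting $\delta \to 0$ via the standard trick of exhausting $\interior(X_\Omega)$ by compacts and using that $X_{\Omega'}$ is open, one obtains the desired embedding $\interior(X_\Omega) \hookrightarrow \interior(X_{\Omega'})$, hence $X_\Omega \hookrightarrow X_{\Omega'}$ with the open-domain conventions in force throughout this paper.

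The main obstacle I anticipate is the bookkeeping in the limiting step: one must be careful that the symplectomorphisms realizing the weight-expansion packing of $X_\Omega$ genuinely exhaust the interior by volume (this is the defining property of the weight expansion, but it should be invoked cleanly), and that the ``for all $N$'' hypothesis can be upgraded to a uniform statement about slightly rescaled packings without circularity. A clean way to organize this is to quote the sharpness of ECH capacities for packings of convex toric domains as a black box from~\cite{cgcc} and reduce everything to the inequality $c_k\bigl(\bigsqcup_{i=1}^\infty B(w_i)\bigr) \le c_k(X_{\Omega'})$ for all $k$, which by~\eqref{eq:ck} is equivalent to $c_k(X_\Omega) \le c_k(X_{\Omega'})$ for all $k$, and which in turn follows from the finite-$N$ hypothesis by letting $N\to\infty$ since each $c_k$ only depends on finitely many of the $w_i$. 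This last observation is what makes the ``for all $N$'' hypothesis exactly the right one, and it sidesteps most of the analytic delicacy.
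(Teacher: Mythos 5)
First, a point of comparison: the paper itself gives no proof of Theorem~\ref{thm:cgcc} --- it is quoted verbatim from \cite{cgcc} --- so there is no in-paper argument to match your proposal against. Your forward direction is fine: the weight expansion really does produce a packing $\bigsqcup_{i=1}^N B(w_i)\hookrightarrow X_\Omega$ (this is the content of the decomposition in \cite{ccfhr}, not of \eqref{eq:ck}, which is only a capacity identity), and composition does the rest. The problem lies in your reverse direction, in the ``limiting argument'' that you flag as mere bookkeeping. You claim that any compact $K\subset\interior(X_\Omega)$ embeds symplectically into $\bigsqcup_{i=1}^N B((1+\delta)w_i)$ because the weight-expansion packing fills $X_\Omega$ by volume. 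That step fails outright: a volume-filling packing $\bigsqcup_i B(w_i)\hookrightarrow X_\Omega$ gives no embedding in the opposite direction. If $X_\Omega$ is not itself a ball, then $\mathrm{vol}(X_\Omega)>\mathrm{vol}(B(w_1))=\tfrac{1}{2}w_1^2$, so one can choose a \emph{connected} compact $K\subset\interior(X_\Omega)$ with $\mathrm{vol}(K)>\tfrac{1}{2}(1+\delta)^2w_1^2$; its image under any embedding into a disjoint union of balls would have to lie in a single component, which is impossible by volume. No rescaling by $1+\delta$ repairs this. The construction of $X_\Omega\hookrightarrow X_{\Omega'}$ from the ball-packing data is precisely the hard content of \cite{cgcc}: it is an inductive gluing argument along the weight decomposition of $\Omega$ and the ``negative'' weight decomposition of $\Omega'$, combined with McDuff's work on ball packings, not a soft exhaustion argument.

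Your fallback route --- pass to $c_k\bigl(\bigsqcup_{i=1}^\infty B(w_i)\bigr)\le c_k(X_{\Omega'})$ for all $k$ (valid, since by \eqref{eq:ck} and the ordering \eqref{eq:dec} each $c_k$ involves only $w_1,\dots,w_k$, so the ``for all $N$'' hypothesis suffices), identify this with $c_k(X_\Omega)\le c_k(X_{\Omega'})$ via \eqref{eq:ck}, and then invoke sharpness of ECH capacities for embeddings of concave toric domains into convex ones --- is logically sound. But the black box you invoke there is the main theorem of \cite{cgcc}, whose proof runs exactly through the ball-packing equivalence you are trying to establish. As an independent proof this is circular; as a citation it simply reproduces what the paper does, namely quote the result. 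So the proposal either contains a genuinely broken step (the exhaustion argument) or reduces to a restatement of the theorem it purports to prove.
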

Next we recall a criterion for the existence of a ball packing
\begin{equation}\label{eq:bp}\bigsqcup_{i=1}^N B(a_i)\hookrightarrow B(c),\end{equation}
based on the so called ``Cremona transformations''.
From now on we assume that $N\ge 3$. A vector $(c;a_1,a_2,\dots,a_N)\in\R^{N+1}$ is called \textit{ordered} if $c\ge a_1\ge a_2\ge \cdots$. An ordered vector is called \textit{reduced} if $c\ge a_1+a_2+a_3$ and $a_i\ge 0$ for all $i$. The Cremona transform is the linear transformation  
\[(c;a_1,a_2,\dots,a_N)\mapsto (2c-a_1-a_2-a_3;c-a_2-a_3,c-a_1-a_3,c-a_1-a_2,a_4,\dots,a_N).\]
A \textit{Cremona move} takes a vector $(c;a_1,a_2,\dots,a_N)$ to the vector obtained by ordering its image under the Cremona transform. The following theorem is a combination of results in \cite{mcduffblowup,mcdpolt,lili}, as explained for example in \cite{busepin}.
\begin{thm}\label{thm:cremona}
Let $(c;a_1,a_2,\dots,a_N)$ be an ordered vector, and $N \geq 3$. Then there exists a symplectic embedding 
\[\bigsqcup_{i=1}^N B(a_i)\hookrightarrow B(c)\]
if and only if the vector $(c;a_1,a_2,\dots,a_N)$ is taken to a reduced vector under a finite number of Cremona moves, and
\[\sum_{i=1}^N a_i^2\le c^2.\]
\end{thm}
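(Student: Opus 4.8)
The plan is to translate the ball-packing problem into the question of which cohomology classes on a blow-up of $\C P^2$ are represented by symplectic forms, and then to match the known answer against the combinatorial Cremona criterion. First I would reduce to the case in which all entries of $(c;a_1,\dots,a_N)$ are rational (by continuity and rescaling), discarding any $a_i=0$ and padding back with zeros. Then, following McDuff--Polterovich \cite{mcdpolt} (see also \cite{mcduffblowup,busepin}), a symplectic embedding $\bigsqcup_{i=1}^N B(a_i)\hookrightarrow B(c)$ exists if and only if the $N$-fold blow-up $M_N:=\C P^2\#N\overline{\C P^2}$ carries a symplectic form representing the class $A=A(c;\mathbf a)$ Poincar\'e dual to $cL-\sum_i a_iE_i$ (and, in the volume-filling case, more precisely: $A$ lies in the closure of the symplectic cone of $M_N$), where $L$ is the line class, $E_i$ the exceptional classes, and $K_{M_N}=-3L+\sum_iE_i$. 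Since $A^2=c^2-\sum_i a_i^2$, volume monotonicity forces $A^2\ge0$, so the inequality $\sum a_i^2\le c^2$ is built in from the start; the content is to show that, \emph{given} $A^2\ge0$, existence of such a form is equivalent to Cremona-reducibility of the vector.

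For that I would invoke the description of the symplectic cone of $M_N$ due to Li--Li \cite{lili} (built on Taubes--Seiberg--Witten theory): a class $A$ with $A^2\ge0$ lying in the correct component of the positive cone is in the closure of the symplectic cone if and only if $A\cdot\mathcal E\ge0$ for every exceptional class $\mathcal E$ (i.e.\ $\mathcal E^2=\mathcal E\cdot K_{M_N}=-1$). The next step is purely linear-algebraic: the Cremona transformation, viewed on $H^2(M_N)$, is the composition of the reflection in $L-E_1-E_2-E_3$ with a permutation of the $E_i$; it is an isometry of the intersection form fixing $K_{M_N}$, hence permutes the set of exceptional classes and preserves $A^2$. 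Under the identification of vectors with classes, a Cremona move on $(c;\mathbf a)$ is exactly this map applied to $A$ (the reordering step only permutes the last $N$ entries, which one checks using $A^2\ge 0$ and the positivity against exceptionals). Consequently ``$A$ pairs non-negatively with all exceptional classes'' is invariant under Cremona moves, and a packing exists for a vector iff it exists for any Cremona transform of it.

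Granting this, the ``if'' direction is quick: for a \emph{reduced} vector the inequalities $c\ge a_1+a_2+a_3$ and $a_i\ge a_{i+1}\ge0$ are precisely non-negativity of $A$ against the ``smallest'' exceptional classes $E_i$, $E_i-E_{i+1}$, $L-E_i-E_j$, and a direct (if tedious) estimate shows these force non-negativity against \emph{all} exceptional classes; hence a reduced vector with $A^2\ge0$ admits a packing, and so does anything Cremona-equivalent to it. For the ``only if'' direction, assuming the packing exists I would run the reduction algorithm: while the ordered vector is not reduced, i.e.\ $c<a_1+a_2+a_3$, a Cremona move strictly decreases the first entry, since $2c-a_1-a_2-a_3<c$, while fixing $A^2=c^2-\sum a_i^2\ge0$, keeping each $a_i\ge0$ (it is the value of $A$ on an exceptional class and $A$ stays in the closed symplectic cone), and keeping $c\ge0$ (the value of $A$ on a class in the forward cone). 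Since all vectors encountered lie in a fixed lattice $\tfrac1D\Z^{N+1}$ and $c$ stays $\ge0$ while strictly dropping by at least $\tfrac1D$ at each non-trivial move, the algorithm terminates after finitely many steps, necessarily at a reduced vector; together with $\sum a_i^2\le c^2$ this is the asserted criterion.

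The main obstacle, and where the real work sits, is the equivalence between the analytic/geometric condition (positivity of $A$ against the \emph{infinite} set of exceptional classes, via the Li--Li characterization) and the combinatorial condition (termination of Cremona reduction at a reduced vector). Establishing it cleanly needs (i) the deep input of \cite{lili} as a black box, (ii) enough of the classification of exceptional classes of $M_N$ to verify that reducedness implies full positivity, and (iii) the termination/lattice argument above; the bookkeeping in (ii) and (iii) --- matching the hypothesis $N\ge3$, controlling the reordering step, and checking the estimates for reduced vectors --- is routine but is the part one must actually carry out. I would follow the packaging of all this in \cite{busepin}, which assembles exactly the pieces from \cite{mcduffblowup,mcdpolt,lili}.
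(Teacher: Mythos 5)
Your outline is correct and is exactly the argument the paper has in mind: the paper offers no proof of Theorem~\ref{thm:cremona}, stating only that it is ``a combination of results in \cite{mcduffblowup,mcdpolt,lili}, as explained for example in \cite{busepin},'' and your reduction to the symplectic cone of the blow-up via McDuff--Polterovich, the Li--Li characterization by exceptional classes, the identification of Cremona moves with reflections fixing $K$, and the lattice termination argument is precisely how those references assemble the proof. No gaps beyond the deep inputs you explicitly black-box.
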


Now suppose that $X_\Omega$ is a symmetric concave toric domain. Its weight sequence is of the form $(w_1,w_2,w_2,w_3,w_3,\dots)$. As an application of the two theorems above we prove the following result, which will be used later and may be also of independent interest.

\begin{prop}\label{prop:flex}
Let $X_\Omega$ be a symmetric concave toric domain with weight sequence $(w_1,w_2,w_2,w_3,w_3,\dots)$, and let $c=c_2(X_\Omega)$. Suppose that $\text{vol}(X_\Omega)\le \text{vol}(B(c))$, and 
\begin{equation}\label{eq:w2w3w4}
\tau(\Omega_1)\ge \tau(\Omega_{11})+\tau(\Omega_{111}),
\end{equation}
where $\tau(\cdot)$ is defined in \eqref{eq-def-of-tau}.
Then there exists a symplectic embedding
\[X_\Omega\hookrightarrow B(c).\]
\end{prop}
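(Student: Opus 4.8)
\textbf{Proof proposal for Proposition~\ref{prop:flex}.}

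The plan is to combine Theorem~\ref{thm:cgcc} with Theorem~\ref{thm:cremona}. By Theorem~\ref{thm:cgcc}, it suffices to show that $\bigsqcup_{i=1}^N B(w_i') \hookrightarrow B(c)$ for every $N$, where $(w_1',w_2',\dots)$ is the full weight sequence $(w_1,w_2,w_2,w_3,w_3,\dots)$. Since the ECH capacities already give $c_2(X_\Omega)\le c_2(B(c))$ and we are assuming the volume constraint $\sum_i (w_i')^2 = \operatorname{vol}(X_\Omega)\cdot 2 \le 2\operatorname{vol}(B(c)) = c^2$ (the volume of $X_\Omega$ equals $\tfrac12\sum (w_i')^2$, and likewise $\operatorname{vol}(B(c))=\tfrac12 c^2$), by Theorem~\ref{thm:cremona} it is enough to check that the ordered vector $(c; w_1, w_2, w_2, w_3, w_3, \dots, w_k,\dots)$ — truncated at any finite length $N$ — is carried to a \emph{reduced} vector by finitely many Cremona moves.

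The key step is to perform one carefully chosen Cremona move and then argue by a monotonicity/induction scheme. First I would note that $c=c_2(X_\Omega)=w_1+w_2$, so the leading entries are $(w_1+w_2; w_1, w_2, w_2, \dots)$. Applying the Cremona transform with respect to $a_1=w_1$, $a_2=w_2$, $a_3=w_2$ gives new first entry $2(w_1+w_2)-w_1-2w_2 = w_1$, and the three modified entries $c-a_2-a_3 = w_1-w_2$, $c-a_1-a_3=w_2$, $c-a_1-a_2=0$. Thus the move sends $(w_1+w_2; w_1,w_2,w_2,w_3,w_3,\dots)$ to (after reordering) $(w_1; w_2, w_1-w_2, w_3, w_3, \dots, 0)$ — provided $w_1 - w_2 \le w_2$, i.e. $w_1\le 2w_2$, which holds because $\tau(\Omega)=w_1$ and $\tau(\Omega_1)=\tau(\Omega_2)=w_2$ with $w_1 \le 2w_2$ following from concavity of $\Omega$ together with symmetry (geometrically the triangle $T(w_1)$ sits inside $\Omega$ and the two leftover pieces force $w_2\ge w_1/2$); I would spell this elementary inequality out. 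Now the new vector has leading entry $w_1 = \tau(\Omega)$ and its ``tail'' should be recognizable as built from the weight data of the subdomains $\Omega_1,\Omega_2$ — indeed $w_2 = \tau(\Omega_1)$ and the remaining $w_3,w_3,\dots$ are the weights of the two copies of $\Omega_1$ and $\Omega_2$. The hypothesis~\eqref{eq:w2w3w4}, $\tau(\Omega_1)\ge \tau(\Omega_{11})+\tau(\Omega_{111})$, is exactly the ``reducedness at the next level'' condition: it guarantees $w_2 \ge w_3 + w_4$ so that after reordering the vector is already reduced, or becomes reduced after one more analogous move applied within the block coming from $\Omega_1$.

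The main obstacle I anticipate is the bookkeeping of how the weight sequence of a symmetric concave domain reorganizes under the Cremona move: one must verify that after the move described above the resulting vector is literally $(w_1;\,$ reduced tail$)$ and that the inequality $c \ge a_1 + a_2 + a_3$ for the \emph{new} vector reduces precisely to~\eqref{eq:w2w3w4} (together with the trivial $w_1 \ge w_2 \ge w_3$), rather than to some stronger condition. A clean way to handle this is to observe that the sub-packing $\bigsqcup B(w_i)$ with $i\ge 2$ corresponding to $\Omega_1$ (resp. $\Omega_2$) is, by construction of the weight expansion, an affine (unimodular) image of the weight packing of $X_{\Omega_1}$, so that embeddability of the whole reduces inductively to embeddability of $\bigsqcup B(\tau(\Omega_1))\sqcup(\text{weights of }\Omega_{11},\Omega_{12},\dots)$ into a ball, and~\eqref{eq:w2w3w4} is the single extra hypothesis needed to start that induction; the base case (three equal or near-equal balls into a ball) is handled directly by Theorem~\ref{thm:cremona}. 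I would therefore structure the proof as: (i) reduce to ball packings via Theorems~\ref{thm:cgcc} and~\ref{thm:cremona}; (ii) record the elementary inequality $w_1\le 2w_2$; (iii) perform the explicit Cremona move and identify the output; (iv) invoke~\eqref{eq:w2w3w4} to conclude reducedness, finishing the verification for all $N$.
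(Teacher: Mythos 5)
Your setup is right---reduce to ball packings via Theorem~\ref{thm:cgcc}, check the volume constraint $\sum (w_i')^2\le c^2$, and then show that the ordered vector $(w_1+w_2;\,w_1,w_2,w_2,w_3,w_3,\dots)$ becomes reduced under Cremona moves---and this is exactly how the paper begins. But there are concrete errors from there on. First, the arithmetic of the first Cremona move is off: with $c=w_1+w_2$, $a_1=w_1$, $a_2=a_3=w_2$ one gets $c-a_1-a_3=c-a_1-a_2=0$, so \emph{both} copies of $w_2$ are killed and the output is $(w_1;\,w_1-w_2,w_3,w_3,w_4,w_4,\dots,0,0)$; there is no surviving $w_2$ entry as in your $(w_1;\,w_2,w_1-w_2,\dots)$. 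Second, your ordering inequality is backwards: for a symmetric concave domain the line $x+y=w_1$ meets the boundary at $(w_1/2,w_1/2)$, so $\Omega_1$ has height $w_1/2$ and $w_2=\tau(\Omega_1)\le w_1/2$ (it equals $0$ when $\Omega$ is a triangle), not $w_2\ge w_1/2$. What is actually needed, and what this gives, is $w_1-w_2\ge w_1/2\ge w_2\ge w_3$, so the displayed vector is ordered.

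The more serious gap is the claim that the vector is reduced after at most one further move, with \eqref{eq:w2w3w4} read as ``$w_2\ge w_3+w_4$.'' That is not what \eqref{eq:w2w3w4} says: $\tau(\Omega_{11})$ and $\tau(\Omega_{111})$ are specific nodes of the weight tree (the all-$1$'s branch), not the third and fourth largest weights, and $w_2\ge w_3+w_4$ can fail while \eqref{eq:w2w3w4} holds. Moreover, reducedness of $(w_1;\,w_1-w_2,w_3,w_3,\dots)$ requires $w_2\ge 2w_3$, which need not hold, so in general an unbounded number of further Cremona moves is required. The paper handles this by proving inductively that after $k-2$ moves the vector has the explicit form \eqref{eq:vec3}, and that it stays ordered and nonnegative throughout; the engine of that induction is the combinatorial lemma \eqref{eq:cond}, namely that $w_2<w_k+w_{k+1}$ forces $w_2+\dots+w_k\le w_1/2$. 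Its proof is a case analysis on which branches of the weight tree carry $w_k$ and $w_{k+1}$, and the hypothesis \eqref{eq:w2w3w4} is used precisely to exclude the all-$1$'s-branch case \eqref{eq:tau6}, while the all-$2$'s-branch case is allowed and is exactly what makes the iteration continue. None of this structure is present in your outline, so the argument as proposed does not close.
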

\begin{proof}[{\bf Proof of Proposition~\ref{prop:flex}}]
We assume that $X_\Omega$ is not a ball, otherwise the result is trivial. Thus, $w_2\neq 0$ and it follows from \eqref{eq:w1w2} that $c_2(X_{\Omega})=w_1+w_2$. By Theorems \ref{thm:cgcc} and \ref{thm:cremona}, it suffices to show that for every $N \geq 2$, the vector
\begin{equation}\label{eq:vec1}(w_1+w_2;w_1,w_2,w_2,w_3,w_3,w_4,w_4,\dots,w_N,w_N)\end{equation}
can be turned into a reduced vector after a finite number of Cremona moves. We remark that we are always assuming that a vector of this form is ordered. After one Cremona move, \eqref{eq:vec1} becomes
\begin{equation}\label{eq:vec2}(w_1;w_1-w_2,w_3,w_3,w_4,w_4\dots,w_N,w_N,0,0).\end{equation}
We need to check that $w_1-w_2\ge w_3$. Note that $w_2=\tau(\Omega_1)=\tau(\Omega_2)$. Since $\Omega$ is symmetric, it follows that the height of $\Omega_1$ is $w_1/2$. Hence $w_2\le w_1/2$, and consequently \[w_1-w_2\ge \frac{w_1}{2}\ge w_2\ge w_3.\]
If $w_3=0$, then \eqref{eq:vec2} is reduced, so we assume that $w_3\neq 0$.
We claim that for $k \geq 2$
\begin{equation}\label{eq:cond}w_2< w_{k}+w_{k+1}\Rightarrow w_2+\dots+w_k\le \frac{w_1}{2}.\end{equation} Note that \eqref{eq:cond} holds for $k=2$ since $w_2\le w_1/2$. Now suppose that \begin{equation}\label{eq:w2wk}w_2< w_{k}+w_{k+1}\end{equation} for some $k>2$. We can write $w_k=\tau(\Omega_{1 i_1\dots i_p})$ and $w_{k+1}=\tau(\Omega_{1 j_1\dots j_l})$, where $p,l\neq 0$. First suppose that $i_q\neq j_q$ for some $q$, and let $r$ be the smallest such $q$. Then, 
\begin{equation}\label{eq:tau2}\tau(\Omega_{1 i_1\dots i_p})+\tau(\Omega_{1 j_1\dots j_l})\le \tau(\Omega_{1 i_1\dots i_r 1})+\tau(\Omega_{1 i_1\dots i_r 2}).\end{equation}
We now observe (see Figure \ref{fig:tor}(a)) that \begin{equation}\label{eq:tau}\tau(\Omega_{1 i_1\dots i_r 1})+\tau(\Omega_{1 i_1\dots i_r 2})\le \tau(\Omega_{1 i_1\dots i_r}).\end{equation} This is because $\tau(\Omega_{1 i_1\dots i_r 1})$ and $\tau(\Omega_{1 i_1\dots i_r 2})$ are the $y$-coordinate and $x$-coordinate of the intersections between the line $x+y=\tau(\Omega_{1 i_1\dots i_r})$ and the lines of slope $-1/2$ and $-2$ which intersect $\partial \Omega_{1 i_1\dots i_r}\cap \R^2_{>0}$ and not $\R^2_{>0}\setminus\Omega_{1 i_1\dots i_r}$, respectively. The latter condition mean that these lines are tangent to $\partial \Omega_{1 i_1\dots i_r}\cap \R^2_{>0}$ if the function defining this domain is $C^1$.
It follows from \eqref{eq:tau2} and \eqref{eq:tau} that
\[w_{k}+w_{k+1}=\tau(\Omega_{1 i_1\dots i_p})+\tau(\Omega_{1 j_1\dots j_l})\le \tau(\Omega_{1 i_1\dots i_r})\le \tau(\Omega_{1 })=w_2,\]
which contradicts \eqref{eq:w2wk}.
\begin{figure}
\begin{subfigure}{0.5\textwidth}
\begingroup%
  \makeatletter%
  \providecommand\color[2][]{%
    \errmessage{(Inkscape) Color is used for the text in Inkscape, but the package 'color.sty' is not loaded}%
    \renewcommand\color[2][]{}%
  }%
  \providecommand\transparent[1]{%
    \errmessage{(Inkscape) Transparency is used (non-zero) for the text in Inkscape, but the package 'transparent.sty' is not loaded}%
    \renewcommand\transparent[1]{}%
  }%
  \providecommand\rotatebox[2]{#2}%
  \newcommand*\fsize{\dimexpr\f@size pt\relax}%
  \newcommand*\lineheight[1]{\fontsize{\fsize}{#1\fsize}\selectfont}%
  \ifx\svgwidth\undefined%
    \setlength{\unitlength}{258.75bp}%
    \ifx\svgscale\undefined%
      \relax%
    \else%
      \setlength{\unitlength}{\unitlength * \real{\svgscale}}%
    \fi%
  \else%
    \setlength{\unitlength}{\svgwidth}%
  \fi%
  \global\let\svgwidth\undefined%
  \global\let\svgscale\undefined%
  \makeatother%
  \begin{picture}(1,0.69565213)%
    \lineheight{1}%
    \setlength\tabcolsep{0pt}%
    \put(0,0){\includegraphics[width=\unitlength,page=1]{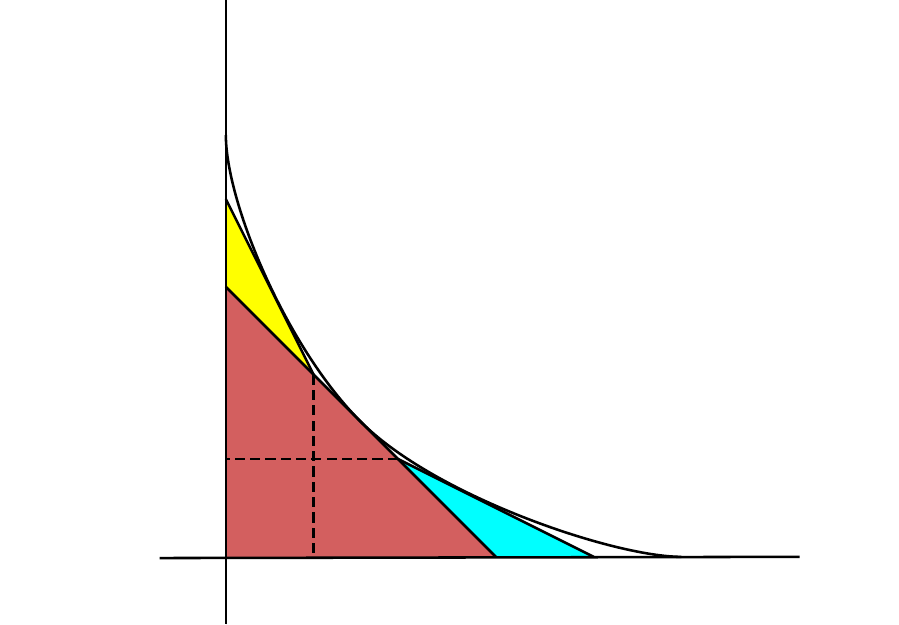}}%
    \put(0.26312222,0.01566838){\color[rgb]{0,0,0}\makebox(0,0)[lt]{\lineheight{1.25}\smash{\begin{tabular}[t]{l}$\tau(\Omega_{1 i_1\dots i_r 2})$\end{tabular}}}}%
    \put(0,0){\includegraphics[width=\unitlength,page=2]{w1.pdf}}%
    \put(0.00236301,0.13276615){\color[rgb]{0,0,0}\makebox(0,0)[lt]{\lineheight{1.25}\smash{\begin{tabular}[t]{l}$\tau(\Omega_{1 i_1\dots i_r 1})$\end{tabular}}}}%
    \put(0,0){\includegraphics[width=\unitlength,page=3]{w1.pdf}}%
    \put(0.65483455,0.2251301){\color[rgb]{0,0,0}\makebox(0,0)[lt]{\lineheight{1.25}\smash{\begin{tabular}[t]{l}$\Omega_{1 i_1\dots i_r 1}$\end{tabular}}}}%
    \put(0.38163164,0.49837592){\color[rgb]{0,0,0}\makebox(0,0)[lt]{\lineheight{1.25}\smash{\begin{tabular}[t]{l}$\Omega_{1 i_1\dots i_r 2}$\end{tabular}}}}%
    \put(0.41746015,0.36970454){\color[rgb]{0,0,0}\makebox(0,0)[lt]{\lineheight{1.25}\smash{\begin{tabular}[t]{l}$\Omega_{1 i_1\dots i_r}$\end{tabular}}}}%
  \end{picture}%
\endgroup%

\caption{}
\end{subfigure}
\begin{subfigure}{0.5\textwidth}
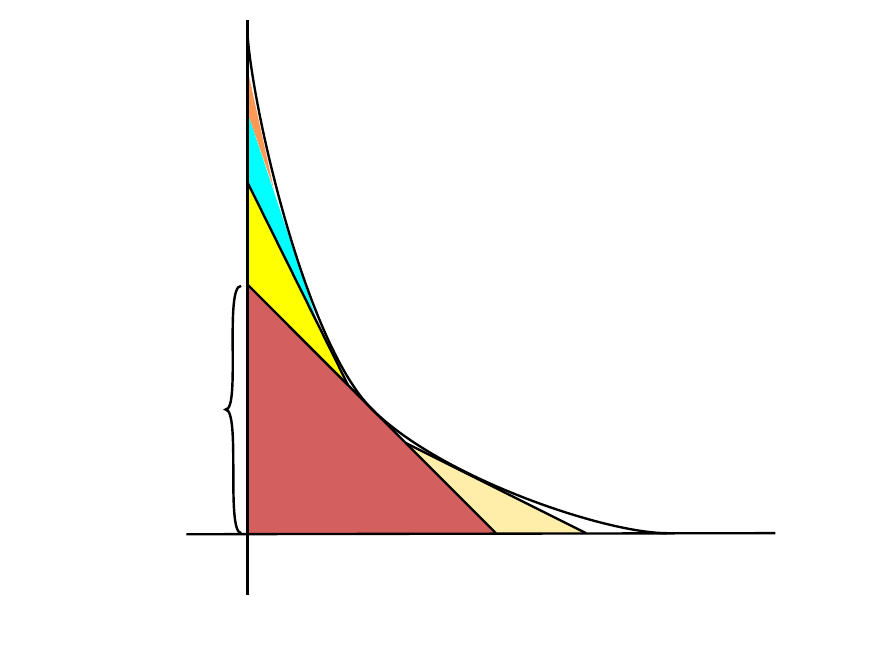
\caption{}
\end{subfigure}
\caption{}
\label{fig:tor}
\end{figure}
Now suppose that $i_q=j_q$ for all $1\le q\le \min(p,l)$. Note that $p<l$ since $w_k\ge w_{k+1}$. Therefore,
\begin{equation}\label{eq:tau3}
\tau(\Omega_{1 i_1\dots i_p})+\tau(\Omega_{1 j_1\dots j_l})\le \tau(\Omega_{1 i_1\dots i_p})+\tau(\Omega_{1 i_1\dots i_p i_{p+1}}).
\end{equation}
Let us first assume that $i_{p+1}=1$. If $i_q=2$ for some $1\le q\le p$, then  
\begin{equation}\label{eq:tau4}\tau(\Omega_{1 i_1\dots i_p})+\tau(\Omega_{1 i_1\dots i_p i_{p+1}})\le \tau(\Omega_{1 i_1\dots i_{q-1} 2})\le \tau(\Omega_{1})=w_2.\end{equation}
Combining \eqref{eq:tau3} and \eqref{eq:tau4}, we again reach a contradiction with \eqref{eq:w2wk}. So the only way to satisfy \eqref{eq:w2wk} in this case is if $i_q=1$ for all $q$. Now let us assume that $i_{p+1}=2$. Similarly, if $i_q=1$ for some $1\le q\le p$, then 
\begin{equation}\label{eq:tau5}\tau(\Omega_{1 i_1\dots i_p})+\tau(\Omega_{1 i_1\dots i_p i_{p+1}})\le \tau(\Omega_{1 i_1\dots i_{q-1} 1})\le \tau(\Omega_{1})=w_2,\end{equation}
and again we reach a contradiction. Thus, to satisfy \eqref{eq:w2wk} it is necessary that $i_q=2$ for all $q$. We note that if $l<p$, then 
the only way to satisfy \eqref{eq:w2wk} is for one of the following two conditions to hold:
\begin{align}
w_{k}+w_{k+1}&= \tau(\Omega_{1 1\dots 1})+\tau(\Omega_{1 1\dots 1 1 \dots j_l})\label{eq:tau6}\\
w_{k}+w_{k+1}&= \tau(\Omega_{1 2\dots 2})+\tau(\Omega_{1 2 \dots 2 2 \dots j_l})\label{eq:tau7}.
\end{align}
If \eqref{eq:tau6} holds, then 
\[w_2< \tau(\Omega_{1 1})+\tau(\Omega_{1 1 1}),\]
which contradicts \eqref{eq:w2w3w4}. Finally, if \eqref{eq:tau7} hold, then for all $j\le k$, $w_j$ is of the form $\tau(\Omega_{1 2\dots 2})$, otherwise we would get a contradiction to \eqref{eq:w2wk}. So
\[w_j=\tau(\Omega_{1 \underbrace{\scriptstyle 2\dots 2}_{j-2}}).\]
Therefore, \[w_2+\dots+w_k\le \frac{w_1}{2},\]
see Figure \ref{fig:tor}(b). This completes the proof of \eqref{eq:cond}. Next, let \[m=\max\{n\mid w_2< w_n+w_{n+1}\}.\] Since $w_3\neq 0$, it follows that $m\ge 2$. We claim that for $2\le k\le m$, after $k-2$ Cremona moves \eqref{eq:vec2} turns into a vector of the form
\begin{equation}\label{eq:vec3}
(w_1+kw_2-2(w_2+\dots+w_k);w_1+(k-1)w_2-2(w_2+\dots+w_k),w_{k+1},w_{k+1},\dots,0).\end{equation}

We will prove this claim by induction on $k$. First observe that for $k=2$, \eqref{eq:vec3} is simply \eqref{eq:vec2}. Now suppose that \eqref{eq:vec3} holds for some $k<m$. Applying one Cremona move, we obtain
\begin{equation}\label{eq:vec4}
\begin{aligned}
(&w_1+(k+1)w_2-2(w_2+\dots+w_k+w_{k+1}); w_1+kw_2-2(w_2+\dots+w_k+w_{k+1}),\\&w_{k+2},w_{k+2},w_2-w_{k+1},w_2-w_{k+1},\dots,0).\end{aligned}\end{equation}
We need to check that \eqref{eq:vec4} is ordered. Note that since $k<m$, it follows that $w_{k+2}> w_2-w_{k+1}$. Moreover, from \eqref{eq:cond} we conclude that
\[ w_1+kw_2-2(w_2+\dots+w_k+w_{k+1})- w_{k+2}\ge 0.\]
So \eqref{eq:vec4} is ordered. Moreover all of its components are non-negative.
In particular after $(m-2)$ Cremona moves we obtain a non-negative vector
\begin{equation}\label{eq:vec5}
(w_1+mw_2-2(w_2+\dots+w_m);w_1+(m-1)w_2-2(w_2+\dots+w_m),w_{m+1},w_{m+1},\dots,0).\end{equation}
If $2w_{m+1}\le w_2$, then \eqref{eq:vec5} is reduced. If not, we apply one more Cremona move to obtain
\begin{equation}\label{eq:vec6}
\begin{aligned}
(&w_1+(m+1)w_2-2(w_2+\dots+w_{m+1});w_1+mw_2-2(w_2+\dots+w_{m+1}),\\&w_2-w_{m+1},w_2-w_{m+1},\dots,0).
\end{aligned}
\end{equation}
Note that \eqref{eq:vec6} is ordered since $w_2-w_{m+1}\ge w_{m+2}$. Since $w_2<2w_{m+1}$, it follows that \eqref{eq:vec6} is reduced. Hence, we conclude that \eqref{eq:vec1} can be turned into a reduced vector after a finite number of Cremona moves. Therefore by Theorems \ref{thm:cgcc} and \ref{thm:cremona}, \[X_\Omega\hookrightarrow B(w_1+w_2),\]
and the proof of the proposition is complete.
\end{proof}

\section{Proof of the main results} \label{sec:proof-main-results}
In this section we prove Theorems~\ref{thm:main},~\ref{thm:rigid}, and~\ref{thm:symplp}, and also Proposition~\ref{prop:lpellip}. 

\subsection{Lagrangian $\ell_p$-sum : rigidity}
Here we prove Theorem \ref{thm:rigid} (a,b,c,d), and the corresponding parts of Theorem \ref{thm:main}.
\begin{proof}[{\bf Proof of Theorem~\ref{thm:rigid} (a,b,c,d)}]
Recall first that the first two ECH capacities of the Euclidean ball satisfy $c_1(B(r))=c_2(B(r))=r$.
Throughout the proof  we shall frequently use the fact  that the interior of ${\mathbb X}_p$ is symplectomorphic to the toric domain $X_{\Omega_p}$, as proved in Theorem~\ref{thm:toric} above. In particular, combining Theorem~\ref{thm:toric} and Proposition~\ref{prop:cap} shows that the first two ECH capacities of ${\mathbb X}_p$ are given by \eqref{eq-first-cap-of-lagrangian-l-p} and \eqref{eq-second-cap-of-lagrangian-l-p}. We now split the proof into four parts.

\begin{enumerate}[(a)]
\item Suppose that $p \in[1,2]$ and that $B(r)\hookrightarrow {\mathbb X}_p$. 
Then, by Theorem~\ref{thm:toric}, Proposition~\ref{prop:cap}, and the properties of the ECH capacities,
\begin{equation}\label{eq:r1}r\le c_1({\mathbb X}_p)=2\pi(1/4)^{1/p},\end{equation}
and hence $r_{S}({\mathbb X}_p) \le 2\pi(1/4)^{1/p}$. On the other hand, let $(\xx,\yy)\in B(r)$, where $r=2\pi(1/4)^{1/p}$. 
It follows from \eqref{eq:r1} and the H\"{o}lder inequality that
\begin{align*}\Vert \xx\Vert^{p}+\Vert \yy\Vert^p&\le\left(\Vert \xx\Vert^2+\Vert \yy\Vert^2\right)^{\frac{p}{2}} \cdot 2^{1-\frac{p}{2}}
< 2^{\frac{p}{2}}\cdot \left(\tfrac{1}{4}\right)^\frac{1}{2}\cdot 2\cdot 2^{-\frac{p}{2}}=1.
\end{align*}
Thus, one has that 
$B(r)\subset {\mathbb X}_p$ for $r=2\pi(1/4)^{1/p}$. We conclude that $r_{S}({\mathbb X}_p)=2\pi(1/4)^{1/p}$, and moreover that the symplectic embedding problem $B\overset{?}{\hookrightarrow} {\mathbb X}_p$ is both rigid and torically rigid for $1 \le p \le 2$.

\item Suppose now that $p \in[2,\infty)$ and that $B(r)\hookrightarrow {\mathbb X}_p$. Using again Theorem~\ref{thm:toric}, Proposition~\ref{prop:cap}, and the properties of the ECH capacities, one has in this case
\begin{equation*}
r\le c_1({\mathbb X}_p)=A(p),
\end{equation*}
where $A(p)$ is given by \eqref{eq:area-p-norm}. Hence $r_{S}({\mathbb X}_p) \le A(p)$.
Since by Proposition~\ref{prop:cc} $X_{\Omega_p}$ is a concave toric domain, it follows from Remark \ref{rmk:refl} that $T\subset\Omega_p$, where $T$ is the triangle bounded by the coordinate axes and the line $x+y=A(p)$. Thus we conclude that $B(A(p))\subset X_{\Omega_p}$. Therefore, $r_{S}({\mathbb X}_p)=A(p)$, and  the symplectic embedding problem $B\overset{?}{\hookrightarrow} {\mathbb X}_p$ is torically rigid.

\item Suppose that $p\in[1,2]$ and that ${\mathbb X}_p \hookrightarrow B(r)$.
Looking now at the second ECH capacitiy (see \eqref{eq-first-cap-of-lagrangian-l-p}), one has 
\begin{equation*}
A(p)=c_2({\mathbb X}_p)\le r.
\end{equation*}
Hence, $R_{S}({\mathbb X}_p)\ge A(p)$. On the other hand, let $(t,t)$ be the intersection of the curve \eqref{eq:curve} and the line $y=x$. It follows from Proposition~\ref{prop:cc} that  $X_{\Omega_p}$ is now a convex toric domain, and thus Remark \ref{rmk:refl} shows that $\Omega_\alpha\subset T$, where $T$ is the triangle bounded by the coordinate axes and the line $x+y=2t$. In particular, $X_T=B(2t)$. From \eqref{eq:area-p-norm},\eqref{eq:g}, and \eqref{eq:curve} we conclude that
\begin{equation}\label{eq:t}t=g_p(0) 
=2\int_0^1(1-r^p)^{1/p}\,dr = {\frac {A(p)} 2}.\end{equation}
So $X_{\Omega_p}\subset B(A(p))$. Therefore, $R_{S}({\mathbb X}_p)=A(p)$, and the symplectic embedding problem ${\mathbb X}_p \overset{?}{\hookrightarrow}B(r)$ is torically rigid.

\item Finally, suppose that $p \in[2,9/2]$ and that ${\mathbb X}_p \hookrightarrow B(r)$.
As in the previous case, from the second ECH capacity we conclude that
\begin{equation}\label{eq:r4}
2\pi(1/4)^{1/p}=c_2({\mathbb X}_p)\le r,
\end{equation}
which implies that $R_{S}({\mathbb X}_p)\ge 2\pi(1/4)^{1/p}$. On the other hand, 
if $(\xx,\yy)\in {\mathbb X}_p$, then by H\"{o}lder's inequality 
\begin{align*}\pi\left(\Vert \xx\Vert^2+\Vert \yy\Vert^2\right)&\le \pi\left(\left(\Vert \xx\Vert^p+\Vert\yy\Vert^p\right)^{2/\alpha}\cdot 2^{1-2/p} \right) < 2\pi\left(\tfrac{1}{4}\right)^{1/p},
\end{align*}
and so ${\mathbb X}_p \subseteq B\left(2\pi\left(\frac{1}{4}\right)^{1/p}\right)$. Therefore, $R_{S}({\mathbb X}_p)=2\pi\left(\frac{1}{4}\right)^{1/p}$, and  the  symplectic embedding problem ${\mathbb X}_p \overset{?}{\hookrightarrow}B(r)$ is both rigid and torically rigid.
\end{enumerate} 
This completes the proof of the first four parts of Theorem \ref{thm:rigid}, and of the corresponding parts of Theorem \ref{thm:main}.
\end{proof}


\subsection{Lagrangian $\ell_p$-sum : flexibility for $p>9/2$} \label{sec-non-rigidity}
In this section, we finish the proofs of Theorems \ref{thm:main} and \ref{thm:rigid}. More precisely, we will compute the outer radius $R_{S}({\mathbb X}_p)$ for $p>9/2$, and show that the corresponding symplectic embedding problem is non-rigid. Assume $p>9/2$. It follows from Theorem~\ref{thm:toric} and Proposition \ref{prop:cap} that
\begin{equation}\label{eq:c2-9:2}
c_2({\mathbb X}_p)=c_2(X_{\Omega_p})=2\pi \left(g_p'\right)^{-1}(-2\pi/3)+3g_p\left(\left(g_p'\right)^{-1}(-2\pi/3)\right).
\end{equation}
Recall from the proof of Proposition \ref{prop:cap} that this number is the $x$-intercept of a tangent line to the curve \eqref{eq:curve}. Moreover, the $x$-intercept of this curve is $2\pi(1/4)^{1/p}$ by Lemma~\ref{lem:properties-of-gp} (a). This implies that
\[
X_{\Omega_p}\subset B(c)\iff c\ge 2\pi(1/4)^{1/p}.
\]
The same calculation as in the proof of Theorem \ref{thm:rigid} (d) above shows that
\[{\mathbb X}_p\subset B(c)\iff c\ge 2\pi(1/4)^{1/p}.\]
 Since $X_{\Omega_p}$ is a concave toric domain, it follows that 
\[c_2({\mathbb X}_p)=c_2(X_{\Omega_p})<2\pi(1/4)^{1/p}.\]
Therefore the following proposition implies that ${\mathbb X}_p\overset{?}{\hookrightarrow} B$ is non-rigid, and moreover that $R_{S}({\mathbb X}_p)=c_2({\mathbb X}_p)$, as claimed in Theorems~\ref{thm:main} and~\ref{thm:rigid}.
\begin{prop}\label{prop:emb}
For $p> 9/2$, there is a symplectic embedding $X_{\Omega_p}\hookrightarrow B(c_2(X_{\Omega_p})).$
\end{prop}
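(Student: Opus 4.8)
The plan is to apply Proposition~\ref{prop:flex} to the symmetric concave toric domain $X_{\Omega_p}$ with $c = c_2(X_{\Omega_p})$. By Proposition~\ref{prop:flex}, it suffices to verify two conditions: the volume inequality $\mathrm{vol}(X_{\Omega_p}) \le \mathrm{vol}(B(c_2(X_{\Omega_p})))$, and the inequality $\tau(\Omega_{p,1}) \ge \tau(\Omega_{p,11}) + \tau(\Omega_{p,111})$ on the first few terms of the weight expansion. (Here I write $\Omega_{p,1}, \Omega_{p,11}, \Omega_{p,111}$ for the domains in the weight-expansion tree of $\Omega_p$.) So the proof reduces to checking these two numerical facts, using the explicit description of the curve~\eqref{eq:curve} and the properties of $g_p$ from Lemma~\ref{lem:properties-of-gp}.

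For the volume condition: the volume of $X_{\Omega_p}$ equals the Euclidean area of $\Omega_p$, which can be computed from the parametrization~\eqref{eq:curve} by a Green's-theorem integral; equivalently, since the interior of $X_{\Omega_p}$ is symplectomorphic to ${\mathbb X}_p$, its volume is $\tfrac12 \mathrm{vol}({\mathbb X}_p) = \tfrac12 \cdot \tfrac14 A(p)^2$ (the product of the $\ell_p$-ball areas, with the standard normalization). Meanwhile $\mathrm{vol}(B(c)) = \tfrac12 c^2$ with $c = c_2(X_{\Omega_p}) = 2\pi v_0 + 3g_p(v_0)$, where $v_0 = (g_p')^{-1}(-2\pi/3)$. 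Thus I must verify $A(p)^2/4 \le (2\pi v_0 + 3 g_p(v_0))^2$, i.e. $A(p)/2 \le 2\pi v_0 + 3g_p(v_0)$. Since $A(p)/2 = g_p(0)$ by Lemma~\ref{lem:properties-of-gp}(a), this is the inequality $g_p(0) \le 2\pi v_0 + 3 g_p(v_0)$, which I would prove by analyzing the function $v \mapsto 2\pi v + 3 g_p(v)$ — note its derivative is $2\pi + 3 g_p'(v)$, which vanishes exactly at $v_0$ since $g_p'(v_0) = -2\pi/3$, so $v_0$ is its unique minimum on $[0, (1/4)^{1/p}]$ (using convexity of $g_p$ from Lemma~\ref{lem:properties-of-gp}(c)), and then compare the minimum value with $g_p(0)$. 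This last comparison is where the condition $p > 9/2$ should enter, and it will likely require a somewhat delicate estimate on $g_p$ and $g_p'$, possibly via the integral representation~\eqref{eq:dg3}; alternatively one can argue that the $x$-intercept of the tangent line at $v_0$ being the value $c_2$ already forces the triangle $T(c_2)$ to cover more than half the area via the tangency/concavity geometry.

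For the weight-expansion condition $\tau(\Omega_{p,1}) \ge \tau(\Omega_{p,11}) + \tau(\Omega_{p,111})$: here I would use the geometric interpretation of the $\tau$'s as intercepts of lines of slopes $-1/2$ and $-2$ tangent to the relevant sub-curves, exactly as in the proof of Proposition~\ref{prop:flex} (see~\eqref{eq:tau}). The point is that $\Omega_{p,1}$ is obtained from the ``leftover'' region of $\Omega_p$ after removing the maximal inscribed triangle, sheared by the matrix $\begin{pmatrix} 1 & 0 \\ 1 & 1 \end{pmatrix}$; I would set up coordinates for this sheared sub-domain, express $\tau(\Omega_{p,1})$, $\tau(\Omega_{p,11})$, $\tau(\Omega_{p,111})$ in terms of $g_p$, $g_p'$ evaluated at the appropriate tangency parameters, and reduce the inequality to a statement about the slope function $g_p'$ — concretely, something like: the slope of $\gamma$ at the point where it meets the $-1/2$-tangent triangle, combined with convexity, forces the claimed sub-additivity. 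Because $g_p'$ ranges over $[-\sqrt{2/p}\,\pi, -\pi]$ by Lemma~\ref{lem:properties-of-gp}(e), and $-2\pi/3$ lies strictly inside this interval precisely when $p > 9/2$, the hypothesis $p > 9/2$ guarantees that the tangency point $v_0$ with $g_p'(v_0) = -2\pi/3$ lies in the open parameter interval, which is what makes the geometric picture (the $-1/2$ line genuinely tangent in the interior, not at an endpoint) valid.

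I expect the main obstacle to be the volume inequality $g_p(0) \le 2\pi v_0 + 3 g_p(v_0)$, i.e. showing that the minimum of $v \mapsto 2\pi v + 3g_p(v)$ does not drop below $g_p(0) = A(p)/2$. This is a genuine analytic estimate on the function $g_p$ that has no ``soft'' geometric shortcut, and it is presumably exactly the place where the threshold $p = 9/2$ is sharp: for $p < 9/2$ the value $-2\pi/3$ is not attained by $g_p'$, the minimum of $2\pi v + 3g_p(v)$ occurs at the endpoint, and one is back in the rigid regime. I would handle it by differentiating under the integral sign in~\eqref{eq:g} and~\eqref{eq:dg3}, or by a convexity/tangent-line argument comparing the linear function $\ell(v) = c_2 - 2\pi v$ (the tangent line realizing $c_2$) with the curve: since $\ell$ is tangent to $\gamma$ from outside and $\gamma$ is the graph of a convex function in the $(v, g_p)$-picture shifted, one gets $g_p(0) \le \ell(0) = c_2$ essentially for free once the tangency is set up correctly, reducing the whole volume step to bookkeeping about which intercept $c_2$ actually is. The secondary obstacle is purely organizational: carefully translating between the $v$-parametrization~\eqref{eq:curve}, the $(x,y)$-coordinates on $\Omega_p$, and the sheared coordinates on the weight-tree sub-domains without sign or factor-of-$2\pi$ errors.
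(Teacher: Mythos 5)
Your overall framework is the paper's: the proof is exactly an application of Proposition~\ref{prop:flex} to the symmetric concave domain $X_{\Omega_p}$ with $c=c_2(X_{\Omega_p})=2\pi v_0+3g_p(v_0)$, $v_0=(g_p')^{-1}(-2\pi/3)$. But both of the hypotheses you must verify are left in a state that does not constitute a proof, and the volume step contains a concrete error. The volume of $X_{\Omega_p}$ equals that of ${\mathbb X}_p$ (the symplectomorphism is volume preserving, so there is no factor $\tfrac12$), and ${\mathbb X}_p$ is \emph{not} a product of two planar $\ell_p$-discs: $\mathrm{vol}({\mathbb X}_p)=2\pi^2\int_0^1 r(1-r^p)^{2/p}\,dr$, which is not $A(p)^2/4$ (at $p=2$ it is $\pi^2/2$ versus $A(2)^2/4=\pi^2/4$, and at $p=\infty$ it is $\pi^2$ versus $4$). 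Consequently your reduction of the volume condition to $g_p(0)\le 2\pi v_0+3g_p(v_0)$ is not valid — that inequality is in fact trivially true since $c_2\ge c_1=2g_p(0)$, which should have been a warning sign that it cannot encode the volume constraint. You also predict that this step is a ``genuine analytic estimate with no soft geometric shortcut'' and is where $p=9/2$ is sharp; neither is the case. The paper disposes of it in one line by monotonicity: ${\mathbb X}_p\subset{\mathbb X}_{\tilde p}$ for $p<\tilde p$ gives $\mathrm{vol}({\mathbb X}_p)\le\mathrm{vol}({\mathbb X}_\infty)=\pi^2\le\tfrac12\bigl(2\pi(1/4)^{2/9}\bigr)^2=\tfrac12 c_2({\mathbb X}_{9/2})^2\le\tfrac12 c_2({\mathbb X}_p)^2$, the last step again by monotonicity of $c_2$. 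The threshold $9/2$ has nothing to do with the volume; it is the value at which $-2\pi/3$ enters the range $[-\pi,-\sqrt{2/p}\,\pi]$ of $g_p'$, i.e.\ where the formula for $c_2$ changes.

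For the condition $\tau((\Omega_p)_1)\ge\tau((\Omega_p)_{11})+\tau((\Omega_p)_{111})$, your proposal stops at ``convexity forces the claimed sub-additivity,'' which is not an argument: convexity of $g_p$ gives the reverse-type estimate $\tau(\Omega_{11})+\tau(\Omega_{111})\le\tau(\Omega_1)$ only in the form of \eqref{eq:tau} applied once, not the inequality with the deeper tree elements you need, and for a general symmetric concave domain the inequality can fail. The correct identifications are $\tau((\Omega_p)_1)=x_{-1/2}(p)-x_{-1}(p)$ and $\tau((\Omega_p)_{11})+\tau((\Omega_p)_{111})=x_{-1/4}(p)-x_{-1/2}(p)$, where $x_{-1/n}(p)$ is the $x$-intercept of the supporting line of slope $-1/n$; the paper then proves that both quantities are \emph{increasing in $p$} (this needs $\partial_p g_p'(v)>0$ from \eqref{eq:dg2} and the explicit positivity of $\partial_p g_p$ in \eqref{eq:dgp}, with a case split at $p=25/2$ because $x_{-1/4}$ may be realized at the endpoint $v=(1/4)^{1/p}$), and finishes with the numerical comparison $d(\infty)=10\sin(\pi/5)-6\sin(\pi/3)<0.69<0.85<w_2(9/2)$. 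This monotonicity-plus-endpoint-evaluation mechanism is the actual content of the proposition and is absent from your outline, so as written the proposal has a genuine gap in both hypotheses of Proposition~\ref{prop:flex}.
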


\begin{proof}[{\bf Proof of Proposition~\ref{prop:emb}}]
Let $p>9/2$. To establish the required embedding, we verify the conditions of Proposition \ref{prop:flex} above.  We first claim that \[\text{Vol}(X_{\Omega_p})=\text{Vol}({\mathbb X}_p)\le \text{Vol}(B(c_2({\mathbb X}_p)))=\text{Vol}(B(c_2(X_{\Omega_p}))).\]
Indeed, since for $p<\tilde{p}$ one has ${\mathbb X}_p \subset {\mathbb X}_{\tilde p}$, it follows from \eqref{eq-second-cap-of-lagrangian-l-p} that for all $p>9/2$
\[ \begin{aligned}
{\rm Vol}({\mathbb X}_p) &\leq {\rm Vol} ({\mathbb X}_{\infty}) = \pi^2 \leq \tfrac{1}{2}\left(2\pi\left(\tfrac{1}{4}\right)^{2/9}\right)^2 = {\tfrac 1 2}c_2({\mathbb X}_{9/2})^2 \\ & \leq  {\tfrac 1 2}c_2({\mathbb X}_{p})^2 = \text{Vol}(B(c_2({\mathbb X}_p))).
\end{aligned}
\]
%

We now need to verify \eqref{eq:w2w3w4}. We claim that $\tau((\Omega_p)_1)$ and $\tau((\Omega_p)_{11})+\tau((\Omega_p)_{111})$ are increasing functions of $p$. For $p>9/2$, we now compute the $x$-intercept of the lines of slope $-1$, $-1/2$, $-1/3$ and $-1/4$, which intersect the curve \eqref{eq:curve}, but not $\R^2_{\ge 0}\setminus \Omega_p$. For $p\ge 25/2$, these lines are all tangent to \eqref{eq:curve}. Let $x_{-1}(p),x_{-1/2}(p),x_{-1/3}(p),x_{-1/4}(p)$ be the $x$-intercepts of these lines, see Figure \ref{fig:3}(a). We observe that
\[\begin{aligned}
\tau((\Omega_p)_1)&=x_{-1/2}(p)-x_{-1}(p),\\
\tau((\Omega_p)_{11})&=x_{-1/3}(p)-x_{-1/2}(p),\\
\tau((\Omega_p)_{111})&=x_{-1/4}(p)-x_{-1/3}(p).\end{aligned}\]
Now let $w_2(p)=\tau((\Omega_p)_1)$ and $d(p)=\tau((\Omega_p)_{11})+\tau((\Omega_p)_{111})$. So,
\begin{equation}\label{eq:w}
\begin{aligned}
w_2(p)&=x_{-1/2}(p)-x_{-1}(p),\\
d(p)&=x_{-1/4}(p)-x_{-1/2}(p).\end{aligned}\end{equation}
If we denote  by $v_{-1}(p)$, $v_{-1/2}(p)$ ,$v_{-1/3}(p)$ and $v_{-1/4}(p)$, the parameter values of $v$ where the lines defined above interesct the curve \eqref{eq:curve}, respectively, then
\begin{equation}\label{eq:v}
\begin{aligned}
v_{-1}(p)&=0,\\
v_{-1/2}(p)&=\left(g_p'\right)^{-1}(-2\pi/3),\\
v_{-1/4}(p)&=\left\{\begin{array}{ll}
\left(\frac{1}{4}\right)^{1/p}=\left(g_p'\right)^{-1}\left(-\sqrt{\frac{2}{p}}\pi\right),&\text{if } \frac{9}{2}<p<\frac{25}{2},\\
\left(g_p'\right)^{-1}(-2\pi/5),&\text{if }p\ge \frac{25}{2}.\\
\end{array}\right.
\end{aligned}
\end{equation}
Since $g_p$ is a concave function,
\begin{equation}\label{eq:vinc}v_{-1}(p)<v_{-1/2}(p)\le v_{-1/3}(p)\le v_{-1/4}(p).\end{equation}
Moreover 
\begin{equation}\label{eq:v2}\begin{aligned}
x_{-1}(p)&=2 g_p(0)=A(p),\\
x_{-1/2}(p)&=2\pi v_{-1/2}(p)+3g_p(v_{-1/2}(p)),\\
x_{-1/4}(p)&=2\pi v_{-1/4}(p)+5g_p(v_{-1/4}(p)).\\
\end{aligned}\end{equation}
In order to prove that the functions $w_2(p)$ and $d(p)$ are increasing, note first that they are continuous in $(9/2,\infty]$ and differentiable in $(9/2,25/2)\cup(25/2,\infty)$. So it suffices to show that
\begin{equation}\label{eq:inc}w_2'(p),d'(p)>0,\quad\text{for all }p\in(9/2,25/2)\cup(25/2,\infty).\end{equation}
It follows from \eqref{eq:dg2} that for $v<(1/4)^{1/p}$, the function $p\mapsto g_p'(v)$ is increasing. Since the function $(p,v)\mapsto g_p(v)$ is $C^\infty$ in the interior of its domain, we obtain
\[\frac{\partial}{\partial v}\frac{\partial}{\partial p}g_p(v)=\frac{\partial}{\partial p}g_p'(v)>0,\]
Hence for a fixed $p$, the function $v\mapsto\frac{\partial}{\partial p}g_p(v)$ is increasing. Further, differentiating \eqref{eq:g} for $v>0$ we obtain
\begin{equation}\label{eq:dgp}
\begin{aligned}
\frac{\partial}{\partial p} g_p(v)&=\int_{\left(\frac{1}{2}-\sqrt{\frac{1}{4}-v^p}\right)^{1/p}}^{\left(\frac{1}{2}+\sqrt{\frac{1}{4}-v^p}\right)^{1/p}}\frac{(1-r^p)^{2/p}}{\left((1-r^p)^{2/p}-\frac{v^2}{r^2}\right)^{1/2}}\left(\ln\frac{1}{(1-r^p)^{\frac{2}{p^2}}r^{\frac{2r^p}{p(1-r^p)}}}\right)\,dr &>0,
\end{aligned}
\end{equation}
since $r\in(0,1)$.
So
\[\begin{array}{rcccl}w_2'(p)&=&x_{-1/2}'(p)-x_{-1}'(p)&=&3\frac{\partial}{\partial p}g_p(v_{-1/2}(p))-2\frac{\partial}{\partial p}g_p(0)\\
&>&2\left(\frac{\partial}{\partial p}g_p(v_{-1/2}(p))-\frac{\partial}{\partial p}g_p(0)\right)&>&0.\end{array}\]
Let $p\in(9/2,15/2)$. It follows from \eqref{eq:v} that $v_{-1/4}'(p)>0$ and that  $g_p'(v_{-1/4}(p))>-2\pi/3$. Using \eqref{eq:w}, \eqref{eq:vinc}, \eqref{eq:v2} and \eqref{eq:dgp} we obtain
\[\begin{aligned} d'(p)&=x_{-1/4}'(p)-x_{-1/2}'(p)\\&=2\pi v_{-1/4}'(p)+5g_p'(v_{-1/4}(p))v_{-1/4}'(p)+ 5 \frac{\partial}{\partial p} g_p(v_{-1/4}(p))-3\frac{\partial}{\partial p}g_p(v_{-1/2}(p))\\
&>5 \frac{\partial}{\partial p} g_p(v_{-1/4}(p))-3\frac{\partial}{\partial p}g_p(v_{-1/2}(p))\\&> 3\left(\frac{\partial}{\partial p} g_p(v_{-1/4}(p))-\frac{\partial}{\partial p}g_p(v_{-1/2}(p))\right)\ge 0.\end{aligned}\]
Let $p\in(15/2/\infty)$. Then, by \eqref{eq:w}, \eqref{eq:v}, \eqref{eq:vinc}, \eqref{eq:v2} and \eqref{eq:dgp} 
\[\begin{aligned} d'(p)&=x_{-1/4}'(p)-x_{-1/2}'(p)\\&=2\pi v_{-1/4}'(p)+5g_p'(v_{-1/4}(p))v_{-1/4}'(p)+ 5 \frac{\partial}{\partial p} g_p(v_{-1/4}(p))-3\frac{\partial}{\partial p}g_p(v_{-1/2}(p))\\
&=5 \frac{\partial}{\partial p} g_p(v_{-1/4}(p))-3\frac{\partial}{\partial p}g_p(v_{-1/2}(p))\\&> 3\left(\frac{\partial}{\partial p} g_p(v_{-1/4}(p))-\frac{\partial}{\partial p}g_p(v_{-1/2}(p))\right)\ge 0.\end{aligned}\]
We conclude that $w_2(p)$ and $d(p)$ are increasing. A simple calculation using \eqref{eq:curve2} shows that
\begin{equation*}\label{eq:dinf}
d(\infty)=10\sin \frac{\pi}{5}-6\sin\frac{\pi}{3}=5\sqrt{\frac{5-\sqrt{5}}{2}}-3\sqrt{3}<0.69.
\end{equation*}
Moreover it follows from \eqref{eq:area-p-norm} that
\begin{equation*}\label{eq:c292}
w_2(9/2)=2\pi\left(\frac{1}{4}\right)^{2/9}-A\left(\frac{9}{2}\right)=2\pi\left(\frac{1}{4}\right)^{2/9}-\frac{4\cdot\Gamma\left(\frac{11}{9}\right)^2}{\Gamma\left(\frac{13}{9}\right)}>0.85.
\end{equation*}
So for any $p\in[9/2,\infty]$,
\[d(p)\le d(\infty)<w_2(9/2)\le w_2(p).\]
Hence, \eqref{eq:w2w3w4} is satisfied, and Proposition \ref{prop:flex} implies that \[X_{\Omega_p}\hookrightarrow B(c_2(X_{\Omega_p})).\]
\end{proof}

\begin{figure}
\begin{subfigure}{0.5\textwidth}
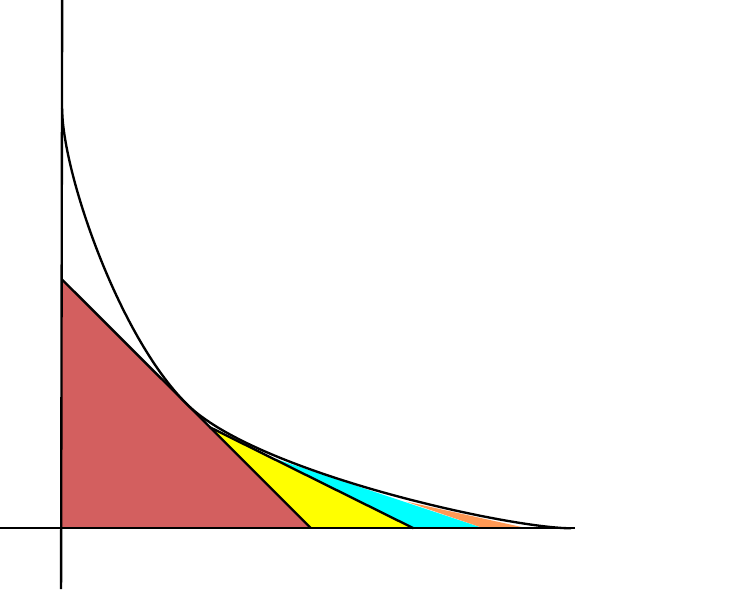
\caption{}
\end{subfigure}
\begin{subfigure}{0.5\textwidth}
\begingroup%
  \makeatletter%
  \providecommand\color[2][]{%
    \errmessage{(Inkscape) Color is used for the text in Inkscape, but the package 'color.sty' is not loaded}%
    \renewcommand\color[2][]{}%
  }%
  \providecommand\transparent[1]{%
    \errmessage{(Inkscape) Transparency is used (non-zero) for the text in Inkscape, but the package 'transparent.sty' is not loaded}%
    \renewcommand\transparent[1]{}%
  }%
  \providecommand\rotatebox[2]{#2}%
  \newcommand*\fsize{\dimexpr\f@size pt\relax}%
  \newcommand*\lineheight[1]{\fontsize{\fsize}{#1\fsize}\selectfont}%
  \ifx\svgwidth\undefined%
    \setlength{\unitlength}{269.99159984bp}%
    \ifx\svgscale\undefined%
      \relax%
    \else%
      \setlength{\unitlength}{\unitlength * \real{\svgscale}}%
    \fi%
  \else%
    \setlength{\unitlength}{\svgwidth}%
  \fi%
  \global\let\svgwidth\undefined%
  \global\let\svgscale\undefined%
  \makeatother%
  \begin{picture}(1,0.58335152)%
    \lineheight{1}%
    \setlength\tabcolsep{0pt}%
    \put(0,0){\includegraphics[width=\unitlength,page=1]{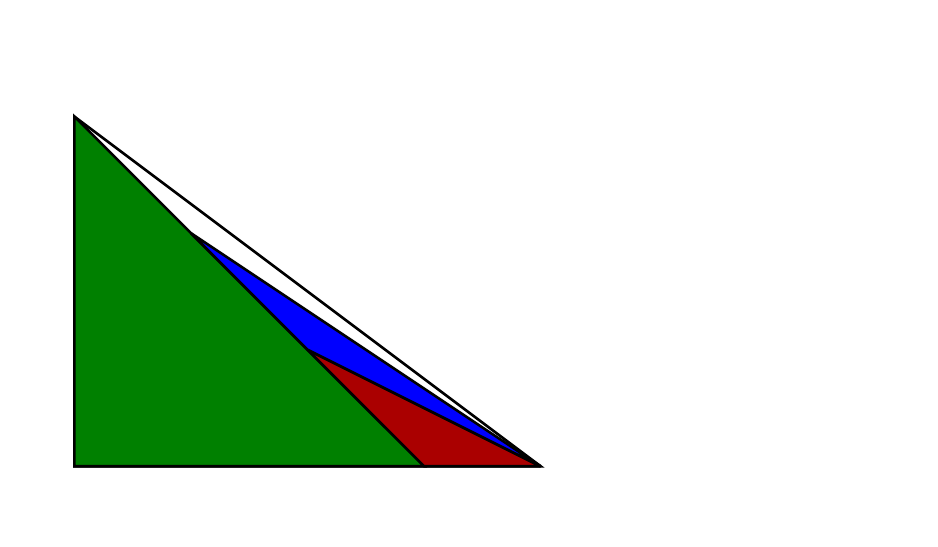}}%
    \put(0.01719633,0.49009467){\color[rgb]{0,0,0}\makebox(0,0)[lt]{\lineheight{1.25}\smash{\begin{tabular}[t]{l}$\frac{1}{2}$\end{tabular}}}}%
    \put(0.58935563,0.02637942){\color[rgb]{0,0,0}\makebox(0,0)[lt]{\lineheight{1.25}\smash{\begin{tabular}[t]{l}$\frac{2}{3}$\end{tabular}}}}%
  \end{picture}%
\endgroup%

\caption{}
\end{subfigure}
\caption{}
\label{fig:3}
\end{figure}

\subsection{The symplectic $\ell^p$-sum of two discs}
Here we prove Theorem \ref{thm:symplp} and Proposition \ref{prop:lpellip}.

\begin{proof}[{\bf Proof of Theorem \ref{thm:symplp}}]
It is clear from the definition that ${\mathbb B}_p({\mathbb C}^2)$ is a concave toric domain for $0<p<2$, and a convex toric domain for $p>2$. Thus, it follows from Lemma \ref{lemma:c1c2} that
\begin{align} 
c_1({\mathbb B}_p({\mathbb C}^2))&=\min(1,2^{1-2/p}), \ {\rm if \ } p>0, \label{eq-c1-ell-p-sum} \\
c_2({\mathbb B}_p({\mathbb C}^2))&=\left\{\begin{array}{ll}2^{1-2/p},& \text{ if }p\ge 2\\
\left({1 + 2^{\frac{p}{p - 2}}}\right)^{1-2/p}, &\text{ if }1\le p<2. \label{eq-c2-ell-p-sum}  \end{array}\right.
\end{align}
A straight forward calculation shows that \begin{equation} \label{eq-inradius-symp-p-sum-1} B(c_1({\mathbb B}_p({\mathbb C}^2)))\subset {\mathbb B}_p({\mathbb C}^2),\text{ for all }p > 0,\end{equation} and 
\begin{equation} \label{eq-outradius-symp-p-sum-1}
{\mathbb B}_p({\mathbb C}^2) \subset B(c_2({\mathbb B}_p({\mathbb C}^2))),\text{ for }p\ge 2.\end{equation}
The combination of~\eqref{eq-c1-ell-p-sum},~\eqref{eq-c2-ell-p-sum},~\eqref{eq-inradius-symp-p-sum-1},~\eqref{eq-outradius-symp-p-sum-1} implies parts (a) and (b) of the theorem.
In order to prove (c), we apply Proposition \ref{prop:flex}. Let $p\in[1,2)$ and let $\tilde{\Omega}_p=\mu({\mathbb B}_p({\mathbb C}^2))$, where $\mu$ is the moment map. 
Note that $\tilde{\Omega}_p \subset {\mathbb R}^2_{\geq 0}$ is the region bounded by the coordinate axes and the curve
$$ x^{p/2}+y^{p/2}=1 \ \ ({\rm and \ } \max \{ \|x\|, \|y\| \} =1 \ {\rm for \ } p=\infty).$$
As in the proof of Proposition \ref{prop:emb} above, we can define $x_{-1}(p)$, $x_{-1/2}(p)$, $x_{-1/3}(p)$ and $x_{-1/4}(p)$ for $\tilde{\Omega}_p$, and  a simple computation shows that for $n \geq 1$ 
\begin{equation}\label{eq:xbp}
x_{-1/n}(p)=\left(\frac{n^\frac{p}{2-p}}{1+n^\frac{p}{2-p}}\right)^{\frac{2-p}{p}}.\end{equation}
Thus, as in the proof of Proposition \ref{prop:emb}, one has \begin{equation}\label{eq:cbp}\begin{aligned}
c_2(p)&=x_{-1/2}(p),\\
w_2(p)&=\tau((\tilde{\Omega}_p)_1)=x_{-1/2}(p)-x_{-1}(p),\\
d(p)&=\tau((\tilde{\Omega}_p)_{11})+\tau((\tilde{\Omega}_p)_{111})=x_{-1/4}(p)-x_{-1/2}(p).\end{aligned}
\end{equation}
Moreover, a direct computation gives
\begin{equation}\label{eq:volbp}
\text{vol}(B_p(1,1))=\int_0^1 (1-t^{p/2})^{2/p}\,dt={\frac 1 p}B \left ({\frac 2 p},{\frac 2 p} \right ),\end{equation}
where here $B(\alpha,\beta)$ stands for the Euler beta function.
It follows from \eqref{eq:xbp}, \eqref{eq:cbp} and \eqref{eq:volbp} that the assumptions for Lemma \ref{prop:flex} are:
\begin{align}
\frac{1}{p}B\left(\frac{2}{p},\frac{2}{p}\right)&\le\frac{1}{2}\left(\left(\frac{2^\frac{p}{2-p}}{1+2^\frac{p}{2-p}}\right)^{\frac{2}{p}-1}\right)^2\label{eq:1}\\
\left(\frac{4^\frac{p}{2-p}}{1+4^\frac{p}{2-p}}\right)^{\frac{2-p}{p}}&-\left(\frac{2^{\frac{p}{2-p}}}{1 + 2^{\frac{p}{2-p}}}\right)^{\frac{2-p}{p}} \le\left(\frac{2^{\frac{p}{2-p}}}{1 + 2^{\frac{p}{2-p}}}\right)^{\frac{2-p}{p}}-\left(\frac{1}{2}\right)^{\frac{2-p}{p}}\label{eq:2}.
\end{align}
We will first prove \eqref{eq:1}. We claim that the function $x\mapsto x B(x,x)$ is convex in $(1,2)$. In fact, for $x\in(1,2)$,
\begin{equation}\label{eq:int}\begin{aligned}\frac{d^2}{dx^2} (x B(x,x))&=\int_0^1\frac{d^2}{dx^2}\left(x (t(1-t))^{x-1}\right)\,dt\\&=\int_0^1\left(t(1-t)\right)^{x-1}\left(\ln(t(1-t))+(\ln(t(1-t)))^2x\right)\,dt.\end{aligned}\end{equation}
The maximum of the function $t\mapsto -1/\ln(t(1-t))$ is $1/\ln(4)$, which is smaller than 1, so the integrand in \eqref{eq:int} is positive. Therefore $x\mapsto x B(x,x)$ is convex in $(1,2)$. Hence, for all $x\in(1,2)$,
\begin{equation}\label{eq:bconv}
xB(x,x)\le B(1,1)+\left(2B(2,2)-B(1,1)\right)(x-1)=\frac{5}{3}-\frac{2x}{3}.
\end{equation}
Now it is a simple calculus problem to check that for $x\in(1,2)$,
\begin{equation}\label{eq:ineqline}
\frac{5}{3}-\frac{2x}{3}\le \frac{4}{(1+2^{\frac{1}{x-1}})^{2(x-1)}}=\left(\frac{2^\frac{1}{x-1}}{1+2^{\frac{1}{x-1}}}\right)^{2(x-1)}.
\end{equation}
Combining \eqref{eq:bconv} and \eqref{eq:ineqline} for $x=2/p$, we obtain \eqref{eq:1}.

\medskip

We now prove \eqref{eq:2} for $1\le p<2$. It is clear that \[2^{\frac{3p}{2-p}}-3\cdot 2^{\frac{2p}{2-p}}+3\cdot 2^{\frac{p}{2-p}}-1=\left(2^{\frac{p}{2-p}}-1\right)^3>0.\] Consequently \[ \frac{2^{\frac{p}{2-p}}}{1+2^{\frac{p}{2-p}}}>\frac{1+3\cdot 2^{\frac{2p}{2-p}}}{4\left(1+2^{\frac{2p}{2-p}}\right)}=\frac{1}{2}\left(\frac{1}{2}+\frac{4^{\frac{p}{2-p}}}{1+4^{\frac{p}{2-p}}}\right).\]
Since the function $x\mapsto x^{\frac{2-p}{p}}$ is concave, it follows that
\[\left(\frac{2^{\frac{p}{2-p}}}{1+2^{\frac{p}{2-p}}}\right)^{\frac{2-p}{p}}>\left( \frac{1}{2}\left(\frac{1}{2}+\frac{4^{\frac{p}{2-p}}}{1+4^{\frac{p}{2-p}}}\right)\right)^{\frac{2-p}{p}}\ge\frac{1}{2}\left(\left(\frac{1}{2}\right)^{\frac{2-p}{p}}+\left(\frac{4^{\frac{p}{2-p}}}{1+4^{\frac{p}{2-p}}}\right)^{\frac{2-p}{p}}\right)\]
and so \[\left(\frac{1}{2}\right)^{\frac{2-p}{p}}+\left(\frac{4^{\frac{p}{2-p}}}{1+4^{\frac{p}{2-p}}}\right)^{\frac{2-p}{p}}\le 2\left(\frac{2^{\frac{p}{2-p}}}{1+2^{\frac{p}{2-p}}}\right)^{\frac{2-p}{p}}.\]
Therefore \eqref{eq:2} holds.
\end{proof}

\begin{proof}[{\bf Proof of Proposition \ref{prop:lpellip}}]
It is enough to prove that ${\mathbb B}_1({\mathbb C}^2)\hookrightarrow E(1/2,2/3)$. The domain ${\mathbb B}_1({\mathbb C}^2)$ is a symmetric concave toric domain, and a direct computation shows that the first few numbers of its weight sequence are
\[\begin{aligned}&w_1=\frac{1}{2},&\;&w_2=\frac{1}{6},&\;&w_3=\frac{1}{6},&\;&w_4=\frac{1}{12},&\;&w_5=\frac{1}{12},&\;&w_6=\frac{1}{20},&\\&w_7=\frac{1}{20},&\;&w_8=\frac{1}{30},&\;&w_9=\frac{1}{30},&\;&w_{10}=\frac{1}{30},&\;&w_{11}=\frac{1}{30}.&&&\end{aligned}\]
%
%
Note that one can easily fit the ball $B(1/2)$ and two balls $B(1/6)$ into $E(1/2,2/3)$, see Figure~\ref{fig:3}(b). The remaining domain is equivalent to the ball $B(1/6)$ under an $SL(2,\Z)$ transformation. So for $N\ge 4$,
\begin{equation}\label{eq:embeq}\bigsqcup_{i=1}^N B(w_i)\hookrightarrow E(1/2,2/3)\iff \bigsqcup_{i=4}^N B(w_i)\hookrightarrow B(1/6).\end{equation}
We now consider the ordered vector
\[(1/6;1/12,1/12,1/20,1/20,1/30,1/30,1/30,1/30,\dots,w_{N}).\]
After appling one Cremona move (and re-ordering), we obtain the vector
\[(7/60;1/20,1/30,1/30,1/30,1/30,1/30,1/30,1/30,\dots,w_N,0),\]
which is reduced. Thus, Theorem \ref{thm:cremona} implies that the embeddings in \eqref{eq:embeq} exist, and therefore from Theorem \ref{thm:cgcc} we conclude that  ${\mathbb B}_1({\mathbb C}^2) \hookrightarrow E(1/2,2/3)$, as required.
\end{proof}

\bibliographystyle{siam}
\bibliography{bib}

\begin{thebibliography}{10}

\bibitem{arnold}
{\sc V.~I. Arnold}, {\em Mathematical methods of classical mechanics},
  Springer-Verlag, New York-Heidelberg, 1978.
\newblock Translated from the Russian by K. Vogtmann and A. Weinstein, Graduate
  Texts in Mathematics, 60.

\bibitem{Art-Ost}
{\sc S.~Artstein-Avidan and Y.~Ostrover}, {\em Bounds for {M}inkowski billiard
  trajectories in convex bodies}, Int. Math. Res. Not. IMRN,  (2014),
  pp.~165--193.

\bibitem{busepin}
{\sc O.~Buse and M.~Pinsonnault}, {\em Packing numbers of rational ruled
  four-manifolds}, J. Symplectic Geom., 11 (2013), pp.~269--316.

\bibitem{ccfhr}
{\sc K.~Choi, D.~Cristofaro-Gardiner, D.~Frenkel, M.~Hutchings, and V.~G.~B.
  Ramos}, {\em Symplectic embeddings into four-dimensional concave toric
  domains}, J. Topol., 7 (2014), pp.~1054--1076.

\bibitem{cgcc}
{\sc D.~Cristofaro-Gardiner}, {\em Symplectic embeddings from concave toric
  domains into convex ones}, J. Differential Geom., 112 (2019), pp.~199--232.

\bibitem{duf_mol}
{\sc J.-P. Dufour and P.~Molino}, {\em Compactification d'actions de {${\bf
  R}^n$} et variables action-angle avec singularit\'es}, in Symplectic
  geometry, groupoids, and integrable systems ({B}erkeley, {CA}, 1989), vol.~20
  of Math. Sci. Res. Inst. Publ., Springer, New York, 1991, pp.~151--167.

\bibitem{eliasson}
{\sc L.~H. Eliasson}, {\em Hamiltonian systems with poisson commuting
  integrals}, PhD thesis, University of Stockholm,  (1984).

\bibitem{gromov}
{\sc M.~Gromov}, {\em Pseudoholomorphic curves in symplectic manifolds},
  Invent. Math., 82 (1985), pp.~307--347.

\bibitem{qech}
{\sc M.~Hutchings}, {\em Quantitative embedded contact homology}, J.
  Differential Geom., 88 (2011), pp.~231--266.

\bibitem{hutbey}
\leavevmode\vrule height 2pt depth -1.6pt width 23pt, {\em Beyond {ECH}
  capacities}, Geom. Topol., 20 (2016), pp.~1085--1126.

\bibitem{lili}
{\sc B.-H. Li and T.-J. Li}, {\em Symplectic genus, minimal genus and
  diffeomorphisms}, Asian J. Math., 6 (2002), pp.~123--144.

\bibitem{mcduffblowup}
{\sc D.~McDuff}, {\em Blowups and symplectic embeddings in dimension 4},
  Topology, 30 (1991), pp.~409--421.

\bibitem{mcdel}
\leavevmode\vrule height 2pt depth -1.6pt width 23pt, {\em Symplectic
  embeddings of 4-dimensional ellipsoids}, J. Topol., 2 (2009), pp.~1--22.

\bibitem{McD}
\leavevmode\vrule height 2pt depth -1.6pt width 23pt, {\em The {H}ofer
  conjecture on embedding symplectic ellipsoids}, J. Differential Geom., 88
  (2011), pp.~519--532.

\bibitem{mcdpolt}
{\sc D.~McDuff and L.~Polterovich}, {\em Symplectic packings and algebraic
  geometry}, Invent. Math., 115 (1994), pp.~405--434.
\newblock With an appendix by Yael Karshon.

\bibitem{ms}
{\sc D.~McDuff and D.~Salamon}, {\em {$J$}-holomorphic curves and symplectic
  topology}, vol.~52 of American Mathematical Society Colloquium Publications,
  American Mathematical Society, Providence, RI, 2004.

\bibitem{McD-Sch}
{\sc D.~McDuff and F.~Schlenk}, {\em The embedding capacity of 4-dimensional
  symplectic ellipsoids}, Ann. of Math. (2), 175 (2012), pp.~1191--1282.

\bibitem{bidisk}
{\sc V.~G.~B. Ramos}, {\em Symplectic embeddings and the {L}agrangian bidisk},
  Duke Math. J., 166 (2017), pp.~1703--1738.

\bibitem{ramossepe}
{\sc V.~G.~B. Ramos and D.~Sepe}, {\em On the rigidity of lagrangian products},
  Journal of Symplectic Geometry,  (to appear).

\bibitem{Schlenk}
{\sc F.~Schlenk}, {\em Symplectic embedding problems, old and new}, Bull. Amer.
  Math. Soc. (N.S.), 55 (2018), pp.~139--182.

\end{thebibliography}

\end{document}